\documentclass[aop]{imsart}

\synctex=1

\usepackage{tikz}
\usetikzlibrary{calc}
\usepackage{ucs}
\usepackage[utf8x]{inputenc}
\usepackage[T1]{fontenc}

\usepackage{amsmath}
\usepackage{amsfonts}
\usepackage{amssymb}
\usepackage{amsthm}
\usepackage{mathtools}
\mathtoolsset{showonlyrefs}	
\usepackage{xcolor}
\usepackage{enumitem}
\usepackage[margin=1cm,labelfont=sc,font={small}]{caption}
\usepackage{graphicx}
\DeclareGraphicsExtensions{.png,.pdf,.jpg}
\graphicspath{{Pics/}}

\usepackage{wrapfig}
\RequirePackage[numbers]{natbib}

\startlocaldefs

\theoremstyle{plain}
\newtheorem{theorem}{Theorem}[section]
\newtheorem{lemma}[theorem]{Lemma}
\newtheorem{proposition}[theorem]{Proposition}

\newtheorem{definition}[theorem]{Definition}

\newtheorem{remark}[theorem]{Remark}

\theoremstyle{definition}

\renewcommand{\theequation}{\arabic{section}.\arabic{equation}}
\renewcommand{\thesection}{\arabic{section}}
\renewcommand{\thesubsection}{\arabic{section}.\arabic{subsection}}
\renewcommand{\thesubsubsection}{\arabic{section}.\arabic{subsection}.\arabic{subsubsection}}


\newcommand{\maxtwo}[2]{\max_{\substack{#1 \\ #2}}} 
\newcommand{\abs}[1]{\left\lvert #1\right\rvert}
\newcommand{\nabs}[1]{\lvert #1\rvert}
\newcommand{\babs}[1]{\bigl\lvert #1\bigr\rvert}
\newcommand{\Babs}[1]{\Bigl\lvert #1\Bigr\rvert}


\newcommand{\calB}{\mathcal{B}}
\newcommand{\calC}{\mathcal{C}}
\newcommand{\calD}{\mathcal{D}}
\newcommand{\calE}{\mathcal{E}}

\newcommand{\calG}{\mathcal{G}}
\newcommand{\calH}{\mathcal{H}}

\newcommand{\calL}{\mathcal{L}}

\newcommand{\calP}{\mathcal{P}}

\newcommand{\calS}{\mathcal{S}}
\newcommand{\calT}{\mathcal{T}}

\newcommand{\calZ}{\mathcal{Z}}

\newcommand{\frc}{\mathfrak{c}}

\newcommand{\fri}{\mathfrak{i}}
\newcommand{\frj}{\mathfrak{j}}

\newcommand{\frn}{\mathfrak{n}}

\newcommand{\bbH}{\mathbb{H}}

\newcommand{\bbL}{\mathbb{L}}

\newcommand{\bbN}{\mathbb{N}}

\newcommand{\bbR}{\mathbb{R}}
\newcommand{\bbS}{\mathbb{S}}

\newcommand{\bbV}{\mathbb{V}}

\newcommand{\bbZ}{\mathbb{Z}}

\newcommand{\sfa}{\mathsf a}

\newcommand{\sfc}{\mathsf c}

\newcommand{\sfe}{\mathsf e}

\newcommand{\sfh}{\mathsf h}

\newcommand{\sfo}{{\mathsf o}}
\newcommand{\sfp}{{\mathsf p}}

\newcommand{\sfs}{{\mathsf s}}

\newcommand{\sfu}{{\mathsf u}}
\newcommand{\sfv}{{\mathsf v}}
\newcommand{\sfw}{{\mathsf w}}

\newcommand{\sfz}{{\mathsf z}}

\newcommand{\sfA}{\mathsf{A}}
\newcommand{\sfB}{\mathsf{B}}
\newcommand{\sfC}{\mathsf{C}}

\newcommand{\sfE}{\mathsf{E}}

\newcommand{\sfG}{\mathsf{G}}

\newcommand{\sfK}{\mathsf{K}}
\newcommand{\sfL}{\mathsf{L}}

\newcommand{\sfO}{\mathsf{O}}
\newcommand{\sfP}{\mathsf{P}}

\newcommand{\sfS}{\mathsf{S}}
\newcommand{\sfT}{\mathsf{T}}

\newcommand{\sfW}{\mathsf{W}}
\newcommand{\sfX}{\mathsf{X}}

\newcommand{\sfZ}{\mathsf{Z}}



\newcommand{\uomega}{\underline{\omega}}
\newcommand{\uOmega}{\underline{\Omega}}




\newcommand{\setof}[2]{\left\{#1 \,:\, #2 \right\}}

\newcommand{\bsetof}[2]{\bigl\{#1\,:\,#2\bigr\}}
\newcommand{\Bsetof}[2]{\Bigl\{#1\,:\,#2\Bigr\}}
\newcommand{\given}{\,|\,}
\newcommand{\bgiven}{\bigm|}
\newcommand{\Bgiven}{\Bigm|}
\newcommand{\defby}{\stackrel{\text{\tiny{\rm def}}}{=}}
\newcommand{\comp}{{\mathrm{c}}}

\newcommand{\IF}[1]{\1_{\{#1\}}}
\newcommand{\bIF}[1]{\1_{\bigl\{#1\bigr\}}}



\newcommand{\lb}{\left(}
\newcommand{\rb}{\right)}
\newcommand{\lbr}{\left\{}
\newcommand{\rbr}{\right\}}

\newcommand{\dd}{{\rm d}}


\newcommand{\step}[1]{S{\small TEP}\,#1.}



\newcommand{\Var}{\bbV{\rm ar}}

\newcommand{\1}{{\text{\usefont{U}{dsrom}{m}{n}1}}}

\newcommand{\smo}[1]{{\mathrm o}(#1)}
\newcommand{\bgo}[1]{{\mathrm O}(#1)}

\newcommand{\leqs}{\lesssim}            

%
\newcommand{\be}[1]{\begin{equation}\label{#1}}
	\newcommand{\ee}{\end{equation}}


\newcommand{\transferOp}{\mathbf{T}}
\newcommand{\scalingop}{\mathrm{Sc}_N}







\newcommand{\normI}[1]{\|#1\|_{\scriptscriptstyle 1}}
\newcommand{\normII}[1]{\|#1\|_{\scriptscriptstyle 2}}
\newcommand{\normsup}[1]{\|#1\|_{\scriptscriptstyle\infty}}

\newcommand{\uvec}{\sfe}


\newcommand{\Ham}{\calH}		
\newcommand{\PF}{\mathbf{Z}}	

\newcommand{\BOX}{\sfB}	
\newcommand{\BOXs}{\sfB^*}	
\newcommand{\PHS}{\bbH_+}		
\newcommand{\NHS}{\bbH_-}		
\renewcommand{\emptyset}{\varnothing}
\newcommand{\betac}{\beta_{\mathrm{c}}}

\newcommand{\st}{\tau_\beta} 	
\newcommand{\lc}[1]{x^{#1}_{\mathrm{\ell}}} 	
\DeclareRobustCommand{\rc}[1]{x^{#1}_{\mathrm{\vphantom{\ell}r}}} 	

\DeclareMathOperator{\inte}{int}
\DeclareMathOperator{\ext}{ext}
\DeclareMathOperator{\diam}{diam}

\newcommand{\taub}{\tau_\beta}


\newcommand{\fcone}{\mathcal{Y}^\blacktriangleleft}
\newcommand{\bcone}{\mathcal{Y}^\blacktriangleright}
\newcommand{\bend}{\mathsf{b}}
\newcommand{\fend}{\mathsf{f}}

\newcommand{\CPts}{\textnormal{CPts}}
\newcommand{\INts}{\textnormal{INts}}

\newcommand{\SetRootMarkBackCont}{\mathfrak{B}_L}
\newcommand{\SetRootMarkForwCont}{\mathfrak{B}_R}
\newcommand{\SetRootDiaCont}{\mathfrak{A}}

\newcommand{\BOXN}{N}

\newcommand{\PNbeta}{\sfP_{N , \beta }}
\newcommand{\Pbeta}[1]{\sfP_{\beta}^{#1}}
\newcommand{\Pbetap}[1]{\sfP_{\beta , +}^{#1}}
\newcommand{\PNbetal}{\sfP_{N , \beta , \lambda/N }}

\newcommand{\Pbetapl}[1]{\sfP_{\beta , \lambda/N , +}^{#1}}
\newcommand{\hPbetapl}[1]{\widehat\sfP_{\beta , \lambda/N , +}^{#1}}

\newcommand{\Ebetap}[1]{\sfE_{\beta , +}^{#1}}

\newcommand{\WNbeta}{\sfW_{N , \beta }}
\newcommand{\Wbeta}{\sfW_{ \beta }}

\newcommand{\etap}{\eta^\prime}
\newcommand{\gammap}{\gamma^\prime}
\DeclareMathOperator{\gap}{\mathsf{gap}}

\newcommand{\eig}{\sfa}

\newcommand\NoMarkerFootnote[1]{%
	\begingroup
	\renewcommand\thefootnote{}\footnote{#1}%
	\addtocounter{footnote}{-1}%
	\endgroup
}

\endlocaldefs

\begin{document}
	
\begin{frontmatter}

\title{Critical prewetting in the 2d Ising model}
\runtitle{Critical prewetting in the 2d Ising model}

\begin{aug}
\author[A]{\fnms{Dmitry} \snm{Ioffe}\ead[label=e1]{ioffe.dmitry@gmail.com}},
\author[B]{\fnms{Sébastien} \snm{Ott}\ead[label=e2]{ott.sebast@gmail.com}},
\author[C]{\fnms{Senya} \snm{Shlosman}\ead[label=e3]{s.shlosman@skoltech.ru}}
\and
\author[D]{\fnms{Yvan} \snm{Velenik}\ead[label=e4]{yvan.velenik@unige.ch}}

\address[A]{Faculty of IE\&M,
	Technion, Haifa 32000, Israel
	\printead{e1}}
\address[B]{Dipartimento di Matematica e Fisica,
	Università degli Studi Roma Tre, 00146 Roma, Italy,
	\printead{e2}}
\address[C]{Center for Advanced Studies,
	Skoltech, Moscow 143026, Russia {\&} Aix Marseille Univ, Universite de Toulon,
	CNRS, CPT, Marseille, France,
	\printead{e3}}
\address[D]{Section de Mathématiques,
	Université de Genève, CH-1205 Genève, Switzerland,
	\printead{e4}}
\end{aug}

\begin{abstract}
	In this paper we develop a detailed analysis of critical prewetting in the context of the two-dimensional Ising model.
	Namely, we consider a two-dimensional nearest-neighbor Ising model in a \(2N\times N\) rectangular box with a boundary condition inducing the coexistence of the \(+\) phase in the bulk and a layer of \(-\) phase along the bottom wall.
	The presence of an external magnetic field of intensity \(h=\lambda/N\) (for some fixed \(\lambda>0\)) makes the layer of \(-\) phase unstable.
	For any \(\beta>\betac\), we prove that, under a diffusing scaling by \(N^{-2/3}\) horizontally and \(N^{-1/3}\) vertically, the interface separating the layer of unstable phase from the bulk phase weakly converges to an explicit Ferrari-Spohn diffusion.
\end{abstract}

\begin{keyword}[class=MSC2020]
	\kwd[Primary ]{60K35}
	\kwd{82B20}
	\kwd{82B24}
\end{keyword}

\begin{keyword}
	\kwd{Ising model}
	\kwd{Ferrari-Spohn diffusion}
	\kwd{interface}
	\kwd{invariance principle}
	\kwd{critical prewetting}
\end{keyword}

\end{frontmatter}

\NoMarkerFootnote{This paper was close to being completed, when the first author, Dmitry Ioffe, passed away. He took part in working on the manuscript to his last days.  The reader who is familiar with his style will recognize his technical powers and his clarity of ideas. We have been trying to keep Dima's style while preparing the final version of the manuscript.}

\section{The model and the result}

\subsection{Introduction}
\label{sub:intro}

The study of interfaces separating different equilibrium phases has a very long history that goes back at least to Gibbs' famous monograph~\cite{Gibbs-1878}.
Of particular interest are the \emph{surface phase transitions} that such systems can undergo, which manifest in singular behavior of the interfaces as some parameter is changed.

Probably the most investigated surface phase transition is the \emph{wetting transition}.
The latter can occur in systems with three different coexisting phases (or with two different phases interacting with a substrate).
Wetting occurs when a \emph{mesoscopic} layer (that is, a layer the width of which diverges in the thermodynamic limit) of, say, phase \(C\) appears at the interface between phases \(A\) and \(B\).
Various aspects of the wetting phase transition have been discussed rigorously: wetting of a substrate in the Ising model~\cite{Pfister+Velenik-96,Frohlich+Pfister-87,Pfister+Velenik-97,Bodineau+Ioffe+Velenik-01}, wetting of the interface between two ordered phases by the disordered phase in the Potts model (when the transition is of first order) or wetting of the \(+/-\) interface by the \(0\) phase in the Blume-Capel model at the triple point~\cite{Bricmont+Lebowitz-87,deConinck+Messager+MiracleSole+Ruiz-88,Hryniv+Kotecky-02,Messager+MiracleSole+Ruiz+Shlosman-91}, etc.

In this phenomenon, the mesoscopic layer wetting the interface (or the substrate) is made of an equilibrium phase.
It is interesting to understand how such a layer behaves when the parameters of the model are slightly changed in order that the corresponding ``phase'' becomes thermodynamically unstable (but the two phases separated by the interface remain equilibrium phases).
In that case, when the free energy of the unstable phase is close enough to that of the stable phases, an interface separating the latter phases may remain wet by a \emph{microscopic} layer of the unstable phase (that is, a layer the width of which remains bounded in the thermodynamic limit).
See, for instance, the numerical analysis in~\cite{Carlon+Igloi+Selke+Szalma-99} of the layer of unstable disordered phase in the two-dimensional Potts model when the temperature is taken slightly below the temperature of order-disorder coexistence.
Here, one usually speaks of \emph{interfacial adsorption} (that is, the appearance of an excess of the unstable phase along the interface, as compared to its density in the bulk of the system), or of \emph{prewetting}.
When the width of the layer of unstable phase continuously diverges as the system is brought closer and closer to the coexistence point, the system is said to undergo \emph{critical prewetting}.

\medskip
In this paper, we provide a detailed analysis of critical prewetting in the two-dimensional Ising model.
Let us briefly describe the setting in an informal way, precise definitions being provided in the next section.
We consider the Ising model in a square box of sidelength \(n\) in \(\mathbb{Z}^2\).
We consider \(+\) boundary condition on three sides of the box and \(-\) boundary condition on the fourth (say, the bottom one).
At phase coexistence (that is, in the absence of a magnetic field and when the temperature is below critical), the bulk of the system is occupied by the \(+\) phase, with a layer of \(-\) phase along the bottom wall.
The width of this layer is of order \(\sqrt{n}\) (thus mesoscopic).
The statistical properties of the interface in this regime are now well understood: after a diffusive rescaling, the latter has been proved to converge weakly to a Brownian excursion~\cite{Ioffe+Ott+Velenik+Wachtel-20}.

\begin{wrapfigure}{R}{4.8cm}
	\centering
	\frame{\includegraphics[width=4.5cm]{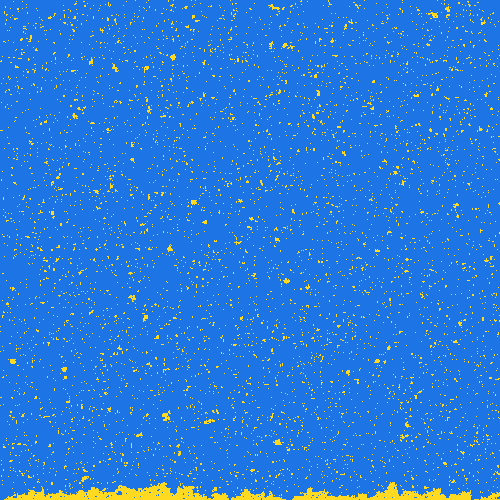}}
\end{wrapfigure}
In order to analyze critical prewetting in this model, we make the \(-\) phase unstable by turning on a positive magnetic field \(h\).
The width of the layer now becomes microscopic, with a width of order \(h^{-1/3}\)~\cite{Velenik-04}.
Our goal in this paper is to provide a detailed analysis of the behavior of this layer in a suitable scaling limit.
Namely, we shall let the intensity \(h\) of the magnetic field vanish at the same time as we let the size \(n\) of the box diverge.
It turns out that a natural way to do that is to set \(h=\lambda/n\), for some fixed constant \(\lambda>0\).
(One reason this particular choice is natural is given in the next paragraph.)
Our main result is a proof that the interface, once rescaled by \(h^{1/3}\) vertically and \(h^{2/3}\) horizontally, weakly converges to an explicit, nondegenerate diffusion process (a Ferrari-Spohn diffusion with suitable parameters, see the next section).

\medskip
One way to understand why the particular choice \(h=\lambda/n\) is natural is to consider a slightly different geometry.
Namely, consider a two-dimensional Ising model in a square box of sidelength \(n\), with \(-\) boundary on all four sides.
Of course, in the absence of a magnetic field and below the critical temperature, the \(-\) phase fills the box. When a positive magnetic field \(h\) is turned on, the \(-\) phase becomes unstable.
It follows that, when the box is taken large enough, the bulk of the box must be occupied by the \(+\) phase (with a possible layer of unstable \(-\) phase along the boundary).
However, for small enough boxes (or a weak enough magnetic field \(h\)), the effect of the boundary condition should dominate and the unstable \(-\) phase should still fill the box.
Intuitively, the critical scale can be obtained as follows: the energetic contribution due to the boundary condition is of order \(n\), while the effect of the magnetic field is of order \(hn^2\).
These two effects will thus compete when \(h\) is of order \(1/n\).
This argument was made precise in~\cite{Schonmann+Shlosman-96}.
Namely, setting \(h=\lambda/n\) with \(\lambda>0\), it was proved that there exists \(\lambda_{\rm c}>0\) such that, when \(n\) is taken large enough, the unstable \(-\) phase occupies the box when \(\lambda<\lambda_{\rm c}\), while the \(+\) phase occupies the box when \(\lambda > \lambda_{\rm c}\). A detailed description of the macroscopic geometry of typical configurations in large boxes was actually provided: when \(\lambda > \lambda_{\rm c}\), typical configurations contain a macroscopic droplet of \(+\) phase squeezed in the box; the droplet's boundary is made up of four straight line segments, one along each of the four sides of the box (at least as seen at the macroscopic scale), with four arcs joining them near the corners of the box (these arcs being suitably scaled quarters of the corresponding Wulff shape).
The layers corresponding to the four line segments should have precisely the same behavior as the layer in the geometry we analyze in the current paper.
\begin{figure}[h]
	\centering
	\frame{\includegraphics[width=4.5cm]{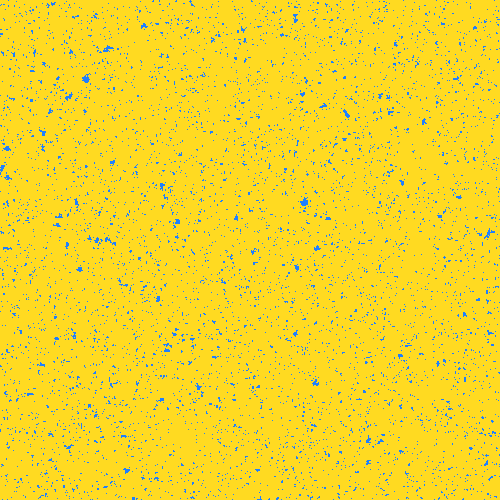}}\hspace{1.5cm}\frame{\includegraphics[width=4.5cm]{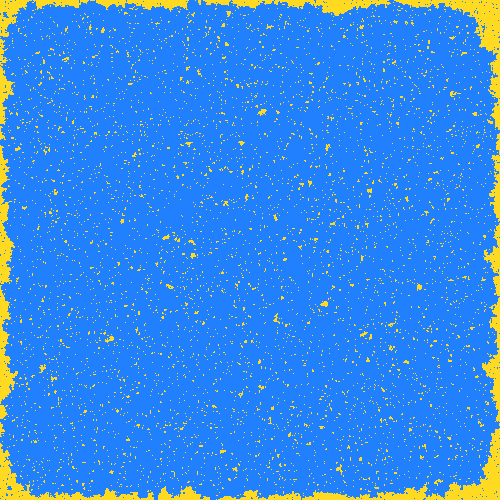}}
	\caption{Typical configurations in a box of sidelength \(n\) with \(-\) boundary condition and magnetic field \(h=\lambda/n>0\). Left: \(\lambda<\lambda_{\rm c}\); right: \(\lambda>\lambda_{\rm c}\).}
	\label{fig:metastability}
\end{figure}

\medskip
We now briefly discuss some earlier rigorous results pertaining to the problem investigated here.
Most of those were dealing with the simpler case of effective models, in which the Ising model interface is replaced by a suitable one-dimensional random walk in an external potential mimicking the effect of the magnetic field.
The first work along these lines is~\cite{Abraham+Smith-86}, in which a particular choice of the random walk transition probabilities made the resulting system integrable and thus allowed for the explicit computation of various quantities of interest.
Similar, but somewhat less precise, results were then obtained for a very general class of random walks (and external potentials) in~\cite{Hryniv-Velenik_2004}.
In essentially the same setting, the scaling limit of the random walk trajectory was proven to be given by a suitable Ferrari--Spohn diffusion in~\cite{Ioffe+Shlosman+Velenik-15}.
The first results for the Ising model itself were obtained in~\cite{Velenik-04}, where the order of the width of the layer of unstable phase was determined in a weak sense and up to logarithmic error terms.
	These results were very recently considerably improved upon in~\cite{GG20}.
	In the latter work, the precise identification of the width of the layer was finally obtained, together with detailed information about various local and global properties of the interface (area, maximum, etc).
	
	It should be noted here that, although being both based on some ideas introduced in~\cite{Velenik-04}, the current paper and~\cite{GG20} are very different, both in their goals and from a technical point of view. In particular, the present work does not rely on results from~\cite{GG20}.
	
	Roughly speaking, from a technical point of view, \cite{GG20} improves on the arguments in~\cite{Velenik-04} and combines them with clever local resampling and coupling ideas.
	In contrast, our approach is based on a coupling between the interface and a directed random walk under area-tilt, relying on a suitable version of the Ornstein--Zernike theory, as developed in~\cite{Campanino-Ioffe-Velenik_2003, Campanino-Ioffe-Velenik_2008, Ott-Velenik_2017}.
	We can then analyze the tilted random walk using ideas from our previous works~\cite{Ioffe+Shlosman+Velenik-15,Ioffe-Velenik-Wachtel-18}.
	
	In the present paper we prove the invariance principle to the Ferrari--Spohn diffusion for the interface described above. The authors of~\cite{GG20} consider their paper as a step in this direction, but their actual results are of a rather different nature and it is not clear to us how the technology introduced in~\cite{GG20} might be adapted to get the invariance principle. On the other hand, one might address some of the remaining open problems in~\cite{GG20} using our coupling between the interface and a direct random walk with area-tilt, though we have not done it, this paper being long enough as it is.
	The only exception is~Lemma~\ref{lem:UB-Area-g}, in which we provide one of the estimate missing in~\cite{GG20}, as it is useful for our undertaking. We should stress, however, that in its current form our coupling is not particularly well suited to prove the type of \emph{global} estimates that~\cite{GG20} are interested in.
	It should be possible, by improving parts of the current work, to lift these restrictions and to get asymptotically sharp versions of the bounds obtained in~\cite{GG20}.
	
\subsection{{The model}}
Let \(\Lambda\Subset\bbZ^2\), \(\beta\geq 0\) and \(h\in\bbR\).
For any configuration \(\sigma\in\Omega\defby\{-1,1\}^{\bbZ^2}\), we define
\[
\Ham_{\Lambda;\beta,h}(\sigma) \defby -\beta \sum_{\substack{\{i,j\}
\cap\Lambda\neq\emptyset\\\normI{j-i}=1}} {(\sigma_i\sigma_j-1)} - h \sum_{i\in\Lambda} \sigma_i .
\]
In the sequel, we will always assume that \(\beta>\betac\), the inverse critical temperature of the 2d nearest-neighbor Ising model.
As we will work with \(\beta\) fixed, we shall casually remove it from the notation.
It should be kept in mind, however, that various numerical constants we employ below are allowed to depend on it.

Given \(\eta\in\Omega\) and \(\Lambda\Subset\bbZ^2\), set
\[
\Omega_\Lambda^\eta\defby \setof{\sigma\in\Omega}{\sigma_i=\eta_i\;
\forall i\not\in\Lambda} .
\]
The Gibbs measure in \(\Lambda\) with boundary condition \(\eta\) is defined as
\[
\mu_{\Lambda;\beta,h}^\eta (\sigma) \defby
\frac{\IF{\sigma\in\Omega_\Lambda^\eta}}{\PF_{\Lambda;\beta,h}^\eta}
\exp\bigl(-\Ham_{\Lambda;\beta,h}(\sigma) \bigr) ,
\]
where
\[
\PF_{\Lambda;\beta,h}^\eta \defby
\sum_{\sigma\in\Omega_\Lambda^\eta} \exp\big(-\Ham_{\Lambda ; \beta,h}(\sigma)\bigr) .
\]

Let us introduce
\begin{align*}
&\PHS\defby\setof{i=(i_1,i_2)\in\bbZ^2}{i_2\geq 0},\
\NHS \defby \bbZ^2\setminus\PHS\ \\
&\qquad {\rm  and}\\
 &\calL \defby\setof{x=(x_1,x_2)\in\bbR^2}{x_2=-\tfrac12}
\end{align*}
Given a subset \(A\subset \bbZ^2\), we write
\begin{equation}\label{eq:Ahat}
	\hat{A} \defby \cup_{i\in A} \lb i+ [-1/2 ,1/2]^2\rb \subset \bbR^2
\end{equation}
and
\[
	\partial^{\rm ext} A \defby \setof{i\in\bbZ^2\setminus A}{\exists j\in A,\, \normI{i-j}=1}.
\]

We will be mostly interested in the behavior of the model in the box
\[
\BOX_N \defby \{-N,\ldots,N\}\times\{0,\ldots,N\}
\]
and with a magnetic field of the form \(h=\lambda/N\) for some \(\lambda>0\).
Moreover, the following choices for the boundary condition will be important in the sequel: the constant boundary conditions \(\eta^+\equiv 1\) and \(\eta^-\equiv -1\) and the mixed boundary condition \(\eta^\pm\) defined by
\[
\eta^\pm_i \defby \IF{i\in\PHS} - \IF{i\in\NHS} .
\]
 The corresponding measures and partition functions will be denoted \(\mu_{\BOXN;\beta,\lambda/N}^+\),\linebreak \(\PF_{\BOXN;\beta,\lambda/N}^+\), \(\mu_{\BOXN;\beta,\lambda/N}^-\),
 \(\PF_{\BOXN;\beta,\lambda/N}^-\), \(\mu_{\BOXN;\beta,\lambda/N}^\pm\) and \(\PF_{\BOXN;\beta,\lambda/N}^\pm\) respectively.

In the sequel, we use \(\bbZ^{2,*} \defby (\frac12 , \frac12) +\bbZ^2\) to denote the dual lattice and
\[
	\BOXs_N \defby \Bsetof{(i,j)\in\bbZ^{2,*}}{\abs{i}\leq N+\tfrac12,\, -\tfrac12\leq j\leq N+\tfrac12}
\]
to denote the box dual to \(\BOX_N\) (see Fig.~\ref{fig:PrimalDualBoxes}).
\begin{figure}[t]
	\centering
	\includegraphics{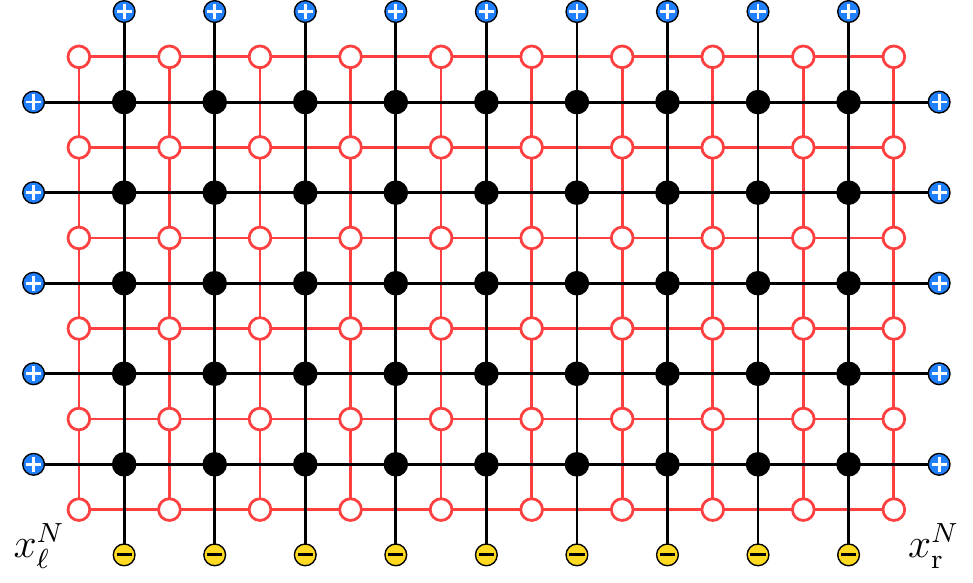}
	\caption{
			The primal box \(\BOX_N\) (in black) and its dual \(\BOXs_N\) (in red) for \(N=4\). The two bottom corner vertices of the dual box
			\(\lc{N}\) and \(\rc{N}\) are also indicated, as well as the boundary condition \(\eta^\pm\) acting on \(\BOX_N\).
	}
	\label{fig:PrimalDualBoxes}
\end{figure}

\medskip
Consider a configuration \(\sigma\in\Omega_{\BOX_N}^\pm \defby\Omega_{\BOX_N}^{\eta_\pm}\).
One can see the boundary of the set \(\bigcup_{i\in\bbZ^2:\,\sigma_i=-1} i + [-\frac12,\frac12]^2\) as a collection of edges of the dual lattice.
Two dual edges in this collection are said to be connected if they still belong to the same component after applying the following deformation rules:\label{deformationRules}\\[3mm]
\includegraphics[width=\textwidth]{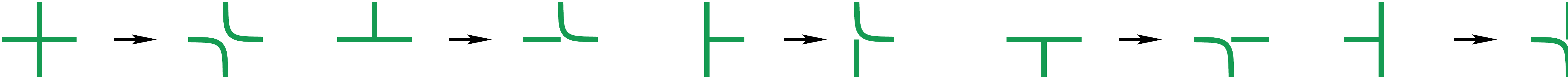}\\[3mm]
The resulting connected components of dual edges are called the Peierls contours of the configuration.
Among those, all form finite, closed loops, denoted \(\hat\gamma_1,\dots,\hat\gamma_m\), except for one infinite component that coincides with \(\calL\) outside the dual box \(\BOXs_N\).
We denote by \(\gamma\) the restriction of this infinite component to \(\BOXs_N\) or, more precisely, the part connecting the lower left corner \(\lc{N}\) to the lower right corner \(\rc{N}\), that is,
\begin{equation}\label{eq:corners}
\lc{N}\defby (-N-\tfrac12,-\tfrac12),\quad
\rc{N}\defby (N+\tfrac12,-\tfrac12).
\end{equation}
The open contour \(\gamma\) will be the central object in our investigations.

\subsection{Scaling, Ferrari-Spohn--diffusions and the main result}
\label{sub:result}

Before stating our main result, we need to introduce the relevant limiting diffusion, as well as the properly rescaled interface.

\subsubsection{Limiting Ferrari--Spohn diffusion}

Let \(\Delta_N \defby \{-N, \dots , N\}^2\).
Let us denote by \(m^*_\beta\defby \lim_{N\to\infty} \mu_{\Delta_N;\beta,0}^{\eta^+}(\sigma_0)\) the spontaneous magnetization and by \(\chi_\beta \defby \bigl(\tau_\beta(\sfe_2) + \tau''_\beta\bigr)^{-1}\), with \(\tau''_\beta = \big(\frac{d}{d \theta}\big)^2 \tau_\beta(\cos(\theta),\sin(\theta))\big|_{\theta=0}\), the curvature of the Wulff shape \(\mathbf{K}_\beta\) (see~\eqref{eq:WulffShape} below) in direction \(\sfe_2\). Here, \(\tau_\beta\) is the surface tension defined below in~\eqref{eq:DefSurfaceTension}.

Let \(\mathsf{Ai}\) denote the Airy function and let \(-\omega_1\) be its first zero. Let us introduce $\varphi_0(r) \defby \mathsf{Ai}\bigl((4\lambda m^*_\beta {\chi_\beta}^{1/2})^{1/3}\, r - \omega_1\bigr)$. The relevant Ferrari--Spohn diffusion for our setting is the diffusion on $(0,\infty)$ with generator
\begin{equation}
\label{eq:FSGenerator}
L_\beta\psi
\defby
\frac{1}{2}\psi'' +  \frac{\varphi_0'}{\varphi_0}\psi'
\end{equation}
and Dirichlet boundary condition at \(0\): \(\psi(0)=0\).
See Appendix~\ref{app:FS} for more information on this class of diffusion processes.

\subsubsection{Scaled interface}

Given a realization of the interface \(\gamma\), let us denote by \(\omega_\gamma\in\Omega_{\BOX_N}^\pm\) the configuration having \(\gamma\) as its unique contour.
We can then define the upper and lower ``envelopes'' \(\gamma^\pm:\bbZ\to\bbZ\) of \(\gamma\) by
\begin{gather*}
	\gamma^+(i) \defby \max\setof{j\in\bbZ}{\sigma_{(i,j)}(\omega_\gamma)=-1}
	+1 ,\\
	\gamma^-(i) \defby \phantom{\max}\mathllap{\min}\setof{j\in\bbZ}{\sigma_{(i,j)}(\omega_\gamma)=+1}
	-1 .
\end{gather*}
Note that \(\gamma^+(i)>\gamma^-(i)\) for all \(i\in\bbZ\).
It will follow from our analysis in Section~\ref{sec:OZTheory} that there exists \(K=K(\beta)<\infty\) such that, with probability
tending to \(1\) as \(n\to\infty\),
\begin{equation}\label{eq:Ising:Interface Width}
	\max_{|i|\leq R} |\gamma^+(i) - \gamma^-(i)| \leq K\log R.
\end{equation}
for any \(R>0\).
We use the same notation \(\gamma^\pm\) for their extension to functions on \(\bbR\) by linear interpolation.

Finally, we define the rescaled profiles \(\hat\gamma^\pm:\bbR \to \bbR\) by
\begin{equation}\label{eq:Ising:rescaled}
\hat\gamma^\pm_N(t) = N^{-1/3} {\chi_\beta}^{-1/2} \gamma^\pm(N^{2/3}t).
\end{equation}
Thanks to~\eqref{eq:Ising:Interface Width},
\[
\lim_{N\to\infty} \mu_{\BOXN;\beta,\lambda/N}^\pm \bigl(\sup_{t\in\bbR} |\hat\gamma^+_N(t) - \hat\gamma^-_N(t)| < \epsilon \bigr) = 1,\quad \forall\epsilon>0,
\]
so that in order to analyze the scaling limit of \(\gamma\), it is sufficient
to understand the limiting behavior of \(\hat\gamma^+_N\). More precisely, for any $T > 0$, we are interested in the distribution of \(\hat\gamma^+_N\) on the space of continuous functions $\sfC[-T,T]$ induced by the measure \(\mu_{\BOXN;\beta,\lambda/N}^\pm\).

\subsubsection{The main result}

\begin{theorem}\label{thm:Main}
	Consider the 2D Ising model at the inverse temperature $\beta > \beta_{c}$ with boundary condition \(\eta^\pm\) in the box \(\BOX_N\) and under the magnetic field  \(h=\lambda/N\), \(\lambda>0\). Let  \(\gamma\) be the corresponding interface (open contour), and \(\hat\gamma^+_N\) be its scaled version (defined by~\eqref{eq:Ising:rescaled}).
	As \(N\to\infty\), the distribution of \(\hat\gamma^+_N\) converges weakly to the distribution of the trajectories of the stationary Ferrari--Spohn diffusion  on $(0,\infty)$ with generator $L_\beta\psi = \frac{1}{2}\psi'' +  \frac{\varphi_0'}{\varphi_0}\psi'$,
	see~\eqref{eq:FSGenerator}, and Dirichlet boundary condition at \(0\).
\end{theorem}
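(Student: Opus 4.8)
The plan is to reduce the statement to an invariance principle for a directed random walk under an area tilt and a wall constraint --- for which the scaling limit is already known to be a Ferrari--Spohn diffusion --- and then to transport this limit back to the Ising interface through a coupling built from the Ornstein--Zernike theory. The first step is to develop such an Ornstein--Zernike representation of the open contour $\gamma$. For $\beta>\betac$ the surface tension $\tau_\beta$ has a strictly positive mass gap, so, following~\cite{Campanino-Ioffe-Velenik_2003,Campanino-Ioffe-Velenik_2008,Ott-Velenik_2017}, the contour $\gamma$ (first in the field-free measure $\mu^\pm_{\BOXN;\beta,0}$) decomposes into a concatenation of irreducible ``pieces'' whose horizontal extents have exponentially decaying tails. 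This exhibits the upper envelope $\gamma^+$ as a bridge version of a directed random walk with i.i.d.\ increments whose covariance structure is dictated by the local geometry of the Wulff shape $\mathbf{K}_\beta$; in particular the vertical diffusivity of this walk is $\chi_\beta$, which is exactly why the vertical rescaling in~\eqref{eq:Ising:rescaled} is by $N^{-1/3}\chi_\beta^{-1/2}$. By~\eqref{eq:Ising:Interface Width} it then suffices to track $\gamma^+$.

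Next I would incorporate the magnetic field $h=\lambda/N$ as an area tilt. Relative to the all-$+$ reference measure, a configuration with a layer of $-$ phase below $\gamma$ carries an extra weight equal, to leading order, to $\exp\bigl(-2h\,m^*_\beta\,\area(\gamma)\bigr)$, with $\area(\gamma)$ the area of that layer; one must then check that the corrections due to the finite loops $\hat\gamma_1,\dots,\hat\gamma_m$ and to the microscopic fluctuations of $\gamma$, as well as the relevant partition-function ratios, are negligible after rescaling --- this is where estimates such as Lemma~\ref{lem:UB-Area-g} come in. Because the natural horizontal scale is $N^{2/3}$ and the vertical one $N^{1/3}$, the layer area scales like $N$, so with $h=\lambda/N$ the product $h\,\area(\gamma)$ stays of order $\lambda$ and the field survives the limit exactly as an area tilt. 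Combined with the first step, the law of $\gamma^+$ under $\mu^\pm_{\BOXN;\beta,\lambda/N}$ becomes, up to controlled errors, that of a directed random walk bridge of length of order $N$, constrained to stay above the wall $\calL$ and reweighted by $\exp\bigl(-2\lambda m^*_\beta\chi_\beta^{1/2}\sum(\text{rescaled heights})\bigr)$.

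It then remains to invoke the invariance principle for such area-tilted, wall-constrained random walks established in~\cite{Ioffe+Shlosman+Velenik-15,Ioffe-Velenik-Wachtel-18}: under the rescaling~\eqref{eq:Ising:rescaled} the walk converges weakly in $\sfC[-T,T]$ to the stationary Ferrari--Spohn diffusion on $(0,\infty)$ with generator $L_\beta$. The explicit constant $(4\lambda m^*_\beta\chi_\beta^{1/2})^{1/3}$ in $\varphi_0$ arises from the associated Sturm--Liouville problem $\tfrac12\varphi_0''-2\lambda m^*_\beta\chi_\beta^{1/2}\,r\,\varphi_0=-E_0\varphi_0$ on $(0,\infty)$ with Dirichlet condition at $0$, whose principal solution is the shifted Airy function $\varphi_0(r)=\mathsf{Ai}\bigl((4\lambda m^*_\beta\chi_\beta^{1/2})^{1/3}r-\omega_1\bigr)$; stationarity of the limit reflects the relaxation of the confined bridge away from its endpoints. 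Tightness of $\hat\gamma^+_N$ in $\sfC[-T,T]$ follows from the exponential tails of the irreducible pieces together with the interface-width and area bounds, the finite-dimensional distributions are transferred from the walk via the coupling, and, since $\sup_t|\hat\gamma^+_N-\hat\gamma^-_N|\to 0$, the same limit holds for the full rescaled interface.

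The hard part will be the second step: setting up a version of the Ornstein--Zernike theory that survives the area tilt and the presence of the wall, with all error terms controlled uniformly in $N$. In particular, one must show that the reweighting induced by the field really does collapse to a function of $\area(\gamma)$ alone up to negligible corrections, that the many partition-function ratios entering the construction behave as expected, and that the interaction of $\gamma$ with the bottom wall and with the surrounding finite loops does not spoil the directed random walk representation --- this is where the bulk of the technical effort is concentrated.
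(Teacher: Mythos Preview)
Your overall strategy is the paper's: couple $\gamma$ to an area-tilted, wall-constrained directed random walk via Ornstein--Zernike theory, then prove an invariance principle to Ferrari--Spohn. Two ingredients you pass over are, in the paper, genuine technical steps rather than routine error control.

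First, the passage from the finite-volume OZ weights $q_{N,\beta}$ to translation-invariant infinite-volume weights $q_\beta$ (and hence to an honest i.i.d.\ random walk) requires the interface to stay uniformly far from $\partial\BOX_N$, in particular from the bottom wall. The paper inserts a separate \emph{entropic repulsion} step (Proposition~\ref{prop:ER}): with high probability $\gamma$ avoids a strip of height $N^\epsilon$ over the bulk of the box, proved by comparison with the $h=0$ model on mesoscopic sub-boxes where the field is negligible. Only after this can property~\ref{property:Mixing} be used to swap $\sfp_{N,\beta}$ for $\sfp_\beta$. You flag ``interaction with the bottom wall'' as a difficulty but do not isolate repulsion as the mechanism that resolves it; without it the coupling in Theorem~\ref{thm:coupling} does not go through.

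Second, the invariance principles in~\cite{Ioffe+Shlosman+Velenik-15,Ioffe-Velenik-Wachtel-18} treat the space-time trajectory of a \emph{one}-dimensional walk with a \emph{deterministic} number of steps. Here the effective walk is two-dimensional and directed, and the number of steps between $\sfu_N$ and $\sfv_N$ is random. The paper does not simply invoke those results: Section~\ref{sec:Proof} re-derives the limit via a Trotter--Kurtz argument for the transfer operator $\transferOp_N$, together with a time-change comparing the natural interpolation $\fri_N$ to a fixed-step interpolation $\frj_N$ (Section~\ref{sec:Tightness}), plus a mixing statement (Proposition~\ref{prob:coupl-simT}) to decouple from the endpoints. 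None of this is deep, but ``invoke'' undersells what is needed.

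A smaller point: the control of the closed contours $\hat\gamma_i$ is not just a matter of negligible corrections. Below $\gamma$ the spins sit in the metastable $-$ phase, and one must rule out nucleation of a supercritical $+$ droplet there (Proposition~\ref{prop:no_large_contours}); this uses the rough localization of $\gamma$ from Lemma~\ref{lem:RoughLocalizationInterface} to ensure $\BOX_N^-[\gamma]$ is too thin to host one. Lemma~\ref{lem:UB-Area-g}, which you cite, is about the area of the layer, not about the loops.
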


\begin{remark}
	In particular, the typical height of the interface  \(\gamma\) is $\sim N^{1/3} {\chi_\beta}^{1/2}$.
\end{remark}

\subsection{Structure of the proof}\label{sec:StructureProof}
\label{sub:sketch}

The proof of Theorem~\ref{thm:Main} is based on a coupling between the interface \(\gamma\) of the Ising model and the interface of an effective model based on a directed random walk with area-tilt.
The proof of convergence to the corresponding Ferrari--Spohn diffusion then follows the scheme used for a similar class of effective models in~\cite{Ioffe+Shlosman+Velenik-15,Ioffe-Velenik-Wachtel-18} (see also Appendix~C of~\cite{Ioffe+Velenik-MPRF-2018}).
Here we briefly sketch the main steps to provide the reader with a picture of the whole argument. Precise statements and detailed proofs are given in later sections. We also emphasize that, as the present paper builds on previous work of some subsets of the authors, some arguments will not be repeated in full details. Therefore many references to previous papers are to be expected. In particular, while not necessary to follow the proofs, some familiarity with (a subset of)~\cite{Ioffe+Shlosman+Velenik-15,Ioffe-Velenik-Wachtel-18,Ioffe+Velenik-MPRF-2018,Ott-Velenik_2017,Ioffe+Ott+Velenik+Wachtel-20} will definitely smoothen the reading.

\smallskip
First note that any realization of the interface \(\gamma\) partitions the box \(\BOX_N\) into two (not necessarily connected) pieces: the part located ``above the interface'', denoted \(\BOX_N^+[\gamma]\), and the part located ``below the interface'', denoted \(\BOX_N^-[\gamma]\) (see Section~\ref{sec:tools} for precise definitions).

\subsubsection*{Weak localization of the interface}
We start with a very weak result on the maximal height reached by the interface (much stronger bounds could be extracted from the same argument, but are not needed).
Namely, we fix some arbitrary \(\delta\in (0,1)\) and show that, with high probability, the maximal height reached by the interface \(\gamma\) is lower than \(\delta N\) (Lemma~\ref{lem:RoughLocalizationInterface}).
This follows from the simple observation that the same is true in the absence of a magnetic field and that the presence of the latter only makes such an event less likely.
In particular, we deduce the following rough bound on the area below \(\gamma\):
\begin{equation}\label{eq:informal:RoughBoundOnArea}
	\abs{\BOX_N^-[\gamma]} \leq \delta (2N+1)N .
\end{equation}

\subsubsection*{All other contours are small}
The next step consists in proving the existence of \(\kappa=\kappa(\beta)\) such that, with high probability, all closed contours have diameter at most \(\kappa \log N\) (Proposition~\ref{prop:no_large_contours}).
The proof consists of two parts, dealing respectively with the contours inside \(\BOX_N^+[\gamma]\) and those inside \(\BOX_N^-[\gamma]\).

The validity of the claim in \(\BOX_N^+[\gamma]\) is rather obvious, since the plus phase is stable, and indeed follows immediately from the validity of the corresponding statement in the absence of a magnetic field and the fact that the presence of the latter only makes the diameter of contours smaller.

The claim in \(\BOX_N^-[\gamma]\) is more delicate, since the spins in this set are subjected to the \(-\) boundary condition and the \(-\) phase is only metastable: the competition between the boundary condition and the magnetic field may lead to the creation of giant contours inside \(\BOX_N^-[\gamma]\).
However, thanks to~\eqref{eq:informal:RoughBoundOnArea}, one can check that there is not enough room in \(\BOX_N^-[\gamma]\) to accommodate a supercritical droplet once \(\delta\) has been chosen small enough.

We denote \(\mu_{\BOXN;\beta,\lambda/N}^{\pm,\kappa}\) the measure \(\mu_{\BOXN;\beta,\lambda/N}^\pm\) conditioned on the absence of contours larger than \(\kappa\log N\). The above shows that both measures lead to the same behavior as \(N\to\infty\).

\subsubsection*{Effective weight due to the magnetic field}

The probability of a realization \(\gamma\) under the Gibbs measure with and without the magnetic field can be related by
\[
\mu_{\BOXN;\beta,\lambda/N}^{\pm,\kappa} (\gamma)
\propto
\mu_{\BOXN;\beta,0}^{\pm,\kappa} \Bigl( \exp\Bigl[ \tfrac{\lambda}{N} \sum_{i\in\BOX_N} \sigma_i \Bigr] \Bigm| \gamma \Bigr) \;
\mu_{\BOXN;\beta,0}^{\pm,\kappa} (\gamma) .
\]
Now, since all closed contours are small under \(\mu_{\BOXN;\beta,\lambda/N}^{\pm,\kappa}\), we expect that, conditionally on \(\gamma\), \(\sum_{i\in\BOX_N} \sigma_i \cong m^*_\beta \abs{\BOX_N^+[\gamma]} - m^*_\beta \abs{\BOX_N^-[\gamma]} = m^*_\beta\abs{\BOX_N} - 2 m^*_\beta \abs{\BOX_N^-[\gamma]}\), where \(m_\beta^*\) is the spontaneous magnetization.
This can be made precise using results in~\cite{Ioffe-Schonmann_1998}, which allows us to obtain
\begin{equation}\label{eq:sketch:effectiveMagnetic Field}
\mu_{\BOXN;\beta,\lambda/N}^{\pm,\kappa} (\gamma)
\propto
\exp\Bigl[ - 2 \frac{\lambda m^*_\beta}{N} \abs{\BOX_N^-[\gamma]} + \frac{\lambda \psi_N(\gamma)}{N}  \Bigr] \;
\mu_{\BOXN;\beta,0}^{\pm,\kappa} (\gamma) ,
\end{equation}
where \(\abs{\psi_N (\gamma)} \leq \nu_\beta \abs{\gamma}\), for all \(N\) and for any realization of the interface \(\gamma\).
The precise formulation appears in Proposition~\ref{prop:Comp-BN-bound}.

From this, it is easy to show that there exists \(C=C(\beta)\) such that, with high probability, \(\abs{\gamma}\leq CN\) (Proposition~\ref{prop:C-short}).
Note that this implies, in particular, that the term \(\lambda \psi_N(\gamma)/N\) above can be neglected when considering events the probability of which vanishes as \(N\to\infty\).

\subsubsection*{Entropic repulsion and reduction to infinite-volume weights}

In the next step, we prove the following rough entropic repulsion bound (Proposition~\ref{prop:ER}): for any small \(\epsilon>0\),
\begin{equation}\label{eq:sketch:roughEntropicRepulsion}
	\mu_{\BOXN;\beta,\lambda/N}^{\pm} (\gamma\cap\calB = \emptyset)
	\geq
	1 - N^{-\frac\epsilon2},
\end{equation}
where \(\calB \defby \setof{i=(i_1,i_2) \in {\BOX_N}}{\abs{i_1}\leq N^{1-5\epsilon}; 0 \leq i_2 \leq N^\epsilon}\).
To prove this, one first establishes a similar claim for a small box of width \(N^{2/3-\epsilon}\) and height \(N^{1/3+\epsilon}\).
In such a box, the effect of the magnetic field \(\lambda/N\) is weak enough to be ignored.
This allows us to deduce the claim by importing a recent entropic repulsion result from~\cite{Ioffe+Ott+Velenik+Wachtel-20} that applies to the model without magnetic field.
Claim~\eqref{eq:sketch:roughEntropicRepulsion} then follows from a union bound (over translates of the small box).

\subsubsection*{Coupling of the interface with an effective random walk model}

Section~\ref{sec:OZTheory} is devoted to the construction of a coupling between the law of \(\gamma\) under \(\mu_{\BOXN;\beta,\lambda/N}^{\pm}\) and the law of the trajectory of a directed random walk with area-tilt.
Let us first describe the latter and then make a few comments on the derivation of this coupling.
The presentation here is simplified (in other words, take what is said here with a grain of salt or, better, look at the actual claims in Section~\ref{sec:OZTheory}), since being precise would require the introduction of too much terminology.

We consider a (directed) random walk on \(\mathbb{Z}^2\), with transition probabilities \(\sfp\) supported on the cone \(\fcone=\setof{x=(x_1,x_2)\in \mathbb{Z}^2}{\abs{x_2}\leq x_1}\) and having exponential tails.
Given \(\sfu,\sfv\in\mathbb{Z}^2\) both in the upper-half plane and with \(\sfv-\sfu\) in the cone \(\fcone\),
we denote by \(\Pbetap{\sfu,\sfv}\) the law of this random walk conditioned to start at \(\sfu\), end at \(\sfv\) and remain inside the upper half-plane (the number of steps is not fixed).

Given a trajectory \(\underline s = (s_0=\sfu,s_1,\dots,s_M=\sfv)\) sampled from \(\Pbetap{\sfu,\sfv}\), we denote by \(\sfA(\underline s)\) the area delimited by the (linearly-interpolated) trajectory and the first coordinate axis.
We then define the area-tilted measure by
\[
\Pbetapl{\sfu,\sfv}(\underline s)
\propto
\exp\Bigl\{ -\frac{2\lambda m^*_\beta}{N} \sfA (\underline s) \Bigr\}
\Pbetap{\sfu,\sfv}(\underline s) .
\]
The main result of Section~\ref{sec:OZTheory} is then the existence of a coupling between the measure \(\Pbetapl{\sfu,\sfv}\) and the law of \(\gamma\) under \(\mu_{\BOXN;\beta,\lambda/N}^{\pm}\).
Of course, the transition probabilities \(\sfp\) have to be chosen properly and one has to appropriately average over the endpoints \(\sfu\) and \(\sfv\).

\smallskip
The construction of this coupling rests on an combination of the Ornstein--Zernike theory with the rough estimates mentioned above.
We first show that a typical realization of the interface under \(\mu_{\BOXN;\beta,0}^{\pm}\) can be written as a concatenation of microscopic pieces, sampled from a suitable \emph{product measure} conditioned on the fact that the concatenation must be a path from \(\lc{N}\) to \(\rc{N}\).
Moreover, using~\eqref{eq:sketch:effectiveMagnetic Field} and the comment following this equation, we can extend this result to a typical interface \(\gamma\) under \(\mu_{\BOXN;\beta,\lambda/N}^{\pm}\).

By construction, the microscopic pieces are confined inside ``diamonds'', so that the area \(\abs{\BOX_N^-[\gamma]}\) is well approximated by the area delimited by the corresponding polygonal approximation and the first coordinate axis, see Fig.~\ref{fig:approx-volume}.

\begin{figure}[h]
	\centering
	\resizebox{7cm}{!}{\includegraphics{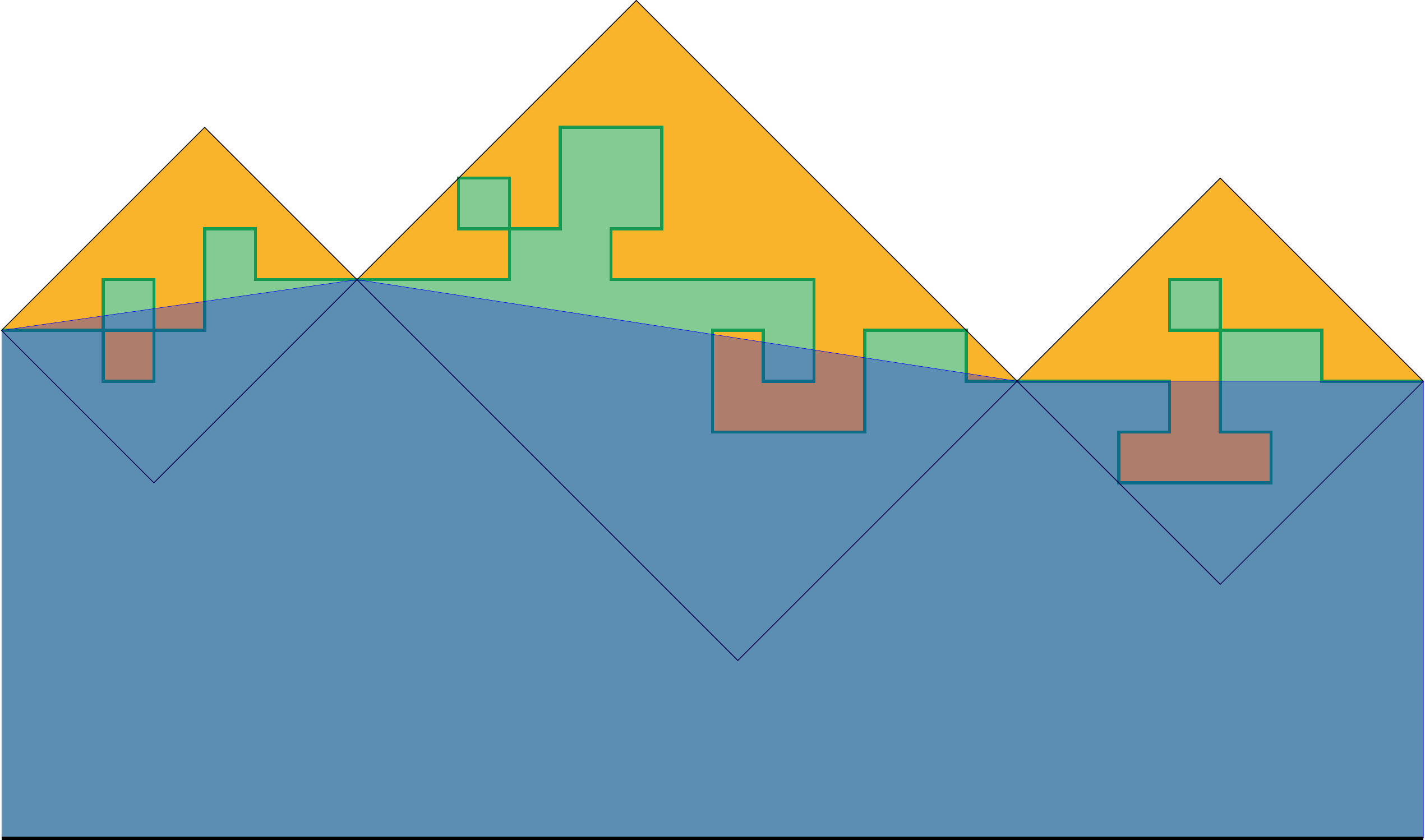}}
	\caption{Three consecutive microscopic pieces in the decomposition of the interface \(\gamma\). Each piece is contained inside a ``diamond''. The area \(\abs{\BOX_N^-[\gamma]}\) is well approximated by area delimited by the polygonal approximation and the first coordinate axis (blue shaded region).}
	\label{fig:approx-volume}
\end{figure}

By~\eqref{eq:sketch:roughEntropicRepulsion}, all the microscopic pieces located far from the two vertical walls of \(\BOX_N\) are, with high probability, above \(\calB\).
In particular, their distance to the boundary of \(\BOX_N\) diverges with \(N\).
This allows us to replace their finite-volume distributions by some limiting \emph{translation-invariant} infinite-volume distribution.

After that, the coupling is essentially a book-keeping exercise, apart from the need to have some weak control on the associated boundary points \(\sfu\) and \(\sfv\).

\subsubsection*{Proof of convergence for the effective random walk (ERW) model}

At this stage, we are left with proving the convergence of a suitably rescaled version of the area-tilted random walk to a Ferrari--Spohn diffusion.
We already proved such a result in a slightly different setting in~\cite{Ioffe+Shlosman+Velenik-15}.
The difference is that in the latter work, we were considering the space-time trajectory of a one-dimensional random walk rather than the spatial trajectory of a directed two-dimensional random walk.
Technically, the main issue is that the number of steps of our directed random walk between two points is random, while it was deterministic in the setting of~\cite{Ioffe+Shlosman+Velenik-15}.

Section~\ref{sec:Proof} is devoted to a proof of the convergence, focusing mainly on the necessary adaptations from~\cite{Ioffe+Shlosman+Velenik-15}.
Let us briefly sketch the main steps of the argument.

First, we introduce ``partition functions'' associated to our directed random walks with area-tilt:
\[
	\calG_{\beta,\lambda/N,+}^{n}[f](\sfu)
	=
	\sfE^{\sfu}_\beta \Bigl[
	\exp\Bigl( -\frac{2\lambda m^*_\beta}{N}\, \sfA(\underline{\sfS}[0,n]) \Bigr) f(\sfZ_n) \,,\,
	\underline{\sfS}[0,n]\subset \bbH_+
	\Bigr] ,
\]
where \(\sfE^{\sfu}_\beta\) denotes expectation with respect to the ERW \(\underline{\sfS}\) starting at \(\sfu\), \(\underline{\sfS}[0,n]\) denotes the first \(n\) steps of the trajectory of \(\underline{\sfS}\) and $\sfZ_n$ is the vertical coordinate of the endpoint of \(\underline{\sfS}[0,n]\).
It is easy to express the finite-dimensional distributions of \(\underline{\sfS}\) in terms of (ratio of) such partition functions (this is done in Section~\ref{sec:convFDD}).
This reduces the proof of convergence of the finite-dimensional distributions to the convergence of these partition functions.
The latter is done using a convergence theorem in~\cite{ethier2009markov}, as explained in Lemma~\ref{lem:TK-form}.

Finally, tightness can be proved by a straightforward adaptation of the approach we used in~\cite{Ioffe-Velenik-Wachtel-18},
but we provide an alternative approach relying on a suitable time-change, which reduces the analysis to a fixed number of steps; this allows us to conclude as in~\cite{Ioffe+Ott+Velenik+Wachtel-20} (this is explained in Section~\ref{sec:Tightness}).

\subsection{Open problems}

We list here some open problems that it would be interesting to address in the future.
\begin{itemize}
	\item In this paper, we have assumed from the start that \(h=\lambda/N\).
	One would expect, however, that the same limiting process should appear when \(h\downarrow 0\) as \(N\to\infty\) under only mild assumptions on the speed (obviously, \(h\) cannot tend to \(0\) too fast, otherwise one would recover the usual Brownian bridge as scaling limit; this would trivially be the case if \(h=\smo{N^{-2}}\) for instance).
	However, it seems that one should be allowed to let \(h\) go to \(0\) arbitrarily slowly.
	In fact, one should even be able to first take the limit as \(N\to\infty\) at fixed \(h>0\) and then, in a second step, let \(h\downarrow 0\).
	Namely, one can consider the Ising model in the upper half plane with \(-\) boundary condition and under a magnetic field \(h>0\).
	Such a model has an infinite interface \(\gamma\), separating the \(-\) phase region near the boundary from the \(+\) phase in the bulk.
	As \(h\downarrow 0\), the mean height of the interface diverges.
	However, if one scales it by a factor \(h^{1/3}\) vertically and by \(h^{2/3}\) horizontally, the rescaled interface should converge in the limit \(h\downarrow 0\) to the Ferrari--Spohn diffusion process on the line.
	\item We have dealt here with boundary condition forcing the interface to lie along the bottom wall of the box.
	As mentioned in the Introduction, an alternative geometric setting of interest corresponds to working in the box \(\Lambda_N\) with \(-\) boundary condition on all four sides of the box and a positive magnetic field \(h=\lambda/N\) with \(\lambda>\lambda_{\rm c}\).
	In this case, typical configurations exhibit the behavior described in the right panel of Fig.~\ref{fig:metastability}.
	It would be interesting to prove that the scaling limit of the boundary of the macroscopic droplet of \(+\) phase far away from the corners is the same Ferrari--Spohn diffusion we obtained in Theorem~\ref{thm:Main}.
	\item Another related problem would be to derive the scaling limit of the Ising model in \(\Lambda_N\) with \(\pm\) boundary condition and a magnetic field \(h>0\) in the upper half plane and \(h<0\) in the lower half plane.
	The limiting diffusion process (as \(N\to\infty\) and \(h\to 0\) in a suitable way and after a similar scaling as used here) should be a variant of the Ferrari--Spohn diffusion obtained in Theorem~\ref{thm:Main} (of course, now a diffusion on \(\bbR\) rather than \(\bbR_+\)).
	\item More general spatial dependence of magnetic fields might be of interest as well.
	In principle, \cite{Ioffe+Shlosman+Velenik-15,Ioffe-Velenik-Wachtel-18} cover a large family of self-potentials for effective models of one or finitely many ordered random walks. Scaling limits for effective models with magnetic fields (self-potentials) of the type \(h(x,y) = \frac{x}{N}\) acting in a strip of width \(N\) were derived in \cite{de2019interface}.
	The motivation for the latter work came from desire to understand the phenomenon of \emph{Uphill diffusions} as driven by non-equilibrium dynamics in the phase transition regime.
	See \cite{colangeli2018nonequilibrium,de2019note,DIMP20} for more detail.
	\item It would be interesting to extend the analysis made in the current paper to situations in which the unstable phase appears at the interface between two stable phases, rather than along the system's boundary (the phenomenon of \emph{interfacial adsorption}).
	\item Another related problem is the description of the scaling limit of the level lines of a two-dimensional SOS interface above a hard wall, as discussed in~\cite{Caputo+Lubetzky+Martinelli+Sly+Toninelli-16}, or of the level lines of SOS models coupled with bulk high and low density Bernoulli fields, as discussed in~\cite{Ioffe+Shlosman-08, Ioffe+Shlosman-19}.
	Note that, in these two problems, one has an unbounded collection of interacting random paths.
	The analysis of the scaling limit of such a system is still open, although progress has been made (see~\cite{Ioffe-Velenik-Wachtel-18} for the scaling limit in the case of finitely many paths, and~\cite{Caputo+Ioffe+Wachtel-19a,Caputo+Ioffe+Wachtel-19b} for preliminary results for infinitely many paths).
\end{itemize}

\section{Surface tension; random-line representation}
\label{sec:tools}

\subsection{Surface tension}
Recall the notation \(\Delta_N = \setof{x=(x_1,x_2)\in\bbZ^2}{-N\leq x_1,x_2 \leq N}\).
We write \(a\cdot b\) for the scalar product between two vectors \(a\) and \(b\) and \(\sfe_i ;\, i=1,2,\) for the coordinate vectors. Given any unit vector \(\uvec \in \bbS^1\subset \bbR^2\), the surface tension in direction \(\uvec\) is defined as
\begin{equation}
	\label{eq:DefSurfaceTension}
	\taub(\uvec) \defby \lim_{N\to\infty} -
	\frac{{\normsup{\uvec}}}{2N} \log \frac{\PF_{\Delta_N;\beta,0}^{\uvec}}{\PF_{\Delta_N;\beta,0}^+},
\end{equation}
where, for \(\uvec = (e_1, e_2)\), the notation \(\normsup{\uvec} \defby \max\{\abs{e_1}, \abs{e_2}\}\) stands for the supremum norm and where \(\PF_{\Delta_N;\beta,0}^{\uvec}\) is the partition function in \(\Delta_N\) with boundary condition \(\eta^{\uvec}\) given by
\[
	\eta^{\uvec}_i \defby
	\begin{cases}
		\phantom{-}1	&	\text{if } i\cdot \uvec \geq 0, \\
		-1				&	\text{if } i\cdot \uvec < 0.
	\end{cases}
\]
Note that, in the above notation, \(\PF_{\Delta_N;\beta,0}^{\sfe_2} = \PF_{{\Delta_N};\beta,0}^\pm\).
In fact, for the axis direction \(\sfe = \sfe_2\),
\eqref{eq:DefSurfaceTension} can be rewritten in terms of the half box \(\BOX_N\),
\begin{equation}\label{eq:st-BOXN}
	\taub
	\defby
	\taub(\sfe_2)
	=
	\lim_{N\to\infty} -\frac1{2N} \log \frac{\PF_{\BOXN;\beta,0}^{\pm}}{\PF_{\BOXN;\beta,0}^+} .
\end{equation}
Existence of the limit in~\eqref{eq:DefSurfaceTension} and its coincidence with the expression on the right-hand side of~\eqref{eq:st-BOXN} are well known~\cite{Frohlich+Pfister-87}.
The function \(\taub (\cdot ) \) can then be extended to \(\bbR^2\) by positive homogeneity:
\begin{equation}\label{eq:DefSurfaceTension2}
	\taub(0) \defby 0
	\qquad\text{and, for all } x\in\bbR^2\setminus\{0\},
	\qquad
	\taub(x) \defby \taub(x/\normII{x})\,\normII{x} .
\end{equation}
The homogeneous extension  of \(\taub\) is the support function of the so-called \emph{Wulff shape}
\begin{equation}\label{eq:WulffShape}
	\mathbf{K}_\beta \defby \bigcap_{n\in\bbS^{1}} \setof{t\in\bbR^2}{t\cdot n \leq \taub(n)} .
\end{equation}
The surface tension possesses the following  properties:
\begin{itemize}
	\item For all \(\beta>\betac\), \(x\mapsto\taub(x)\) is a norm.
	\item \textbf{Sharp triangle inequality}~\cite[Theorem~B]{Campanino-Ioffe-Velenik_2003}.
	There exists \(\kappa=\kappa(\beta) {> 0}\) such that, for all \(x,y\in\bbR^2\),
	\begin{equation}\label{eq:SharpTriangleInequality}
	\taub(x) + \taub(y) - \taub(x+y) \geq \kappa \bigl(\normII{x} + \normII{y} - \normII{x+y}\bigr) .
	\end{equation}
\end{itemize}

\subsection{Random-line representation}
\label{sub:random-lines}

In this section, we collect a number of results on the random-line representation, as developed in~\cite{Pfister+Velenik-97,Pfister-Velenik_1999}; proofs can be found in the latter works and precise references will be given below.

\medskip

Let \(\Lambda\Subset\bbZ^2\) be such that \(\hat{\Lambda}\) is a simply connected subset of \(\bbR^2\) (remember~\eqref{eq:Ahat}).
Fix
\begin{equation}\label{eq:DualBox}
\sfu,\sfv\in\Lambda^* \defby \setof{w\in\bbZ^{2,*}}{\exists z\in\Lambda, \normII{z-w} = \frac{1}{\sqrt{2}}}
= \hat{\Lambda}\cap\bbZ^{2,*}  .
\end{equation}
Let us denote by \(\calC_{\sfu,\sfv}\) the set of all \(\bbZ^{2,*}\)-edge-self-avoiding paths from \(\sfu\) to \(\sfv\) such that deforming the path at each vertex with at least 3 incoming edges according to the deformation rules on page~\pageref{deformationRules} does not split the path into several components.

One can define a weight \(q_{\Lambda,\beta}:\calC_{\sfu,\sfv}\to\bbR_{\geq 0}\) with the following properties (see~\cite{Pfister-Velenik_1999} for an explicit description):
\begin{enumerate}[label=\(\sfP_\arabic*\):, ref=\(\sfP_\arabic*\)]
	\item\label{property:SupportedInVol} \(q_{\Lambda,\beta}(\gamma) = 0\) when \(\gamma\not\subset\hat\Lambda\).
	\item\label{property:MonotVolWeight}
	\(\Lambda\subset\Lambda'\subset\bbZ^2\) implies that \(q_{\Lambda,\beta}(\gamma)\geq q_{\Lambda',\beta}(\gamma)\) for all \(\gamma\subset\hat\Lambda\)
	\cite[Lemma~6.3]{Pfister-Velenik_1999}.
	\item\label{property:BK} For any \(t\in\Lambda^*\),
	\[
	\sum_{\substack{\gamma\in\calC_{\sfu,\sfv}\\\gamma\ni t}} q_{\Lambda,\beta}(\gamma) \leq \Bigl( \sum_{\gamma\in\calC_{\sfu,t}} q_{\Lambda,\beta}(\gamma) \Bigr) \Bigl( \sum_{\gamma\in\calC_{t,\sfv}} q_{\Lambda,\beta}(\gamma) \Bigr) ,
	\]
	where \(\gamma\ni t\) means that the path \(\gamma\) visits the vertex \(t\)
	\cite[Lemma~6.5]{Pfister-Velenik_1999}.
	\item\label{property:MonotVolSum} For any \(\Lambda\subset\Lambda'\subset\bbZ^2\),
	\cite[(6.9)]{Pfister-Velenik_1999}
	\begin{equation}\label{eq:Prop4}
	\sum_{\gamma\in\calC_{\sfu,\sfv}} q_{\Lambda,\beta}(\gamma) \leq \sum_{\gamma\in\calC_{\sfu,\sfv}} q_{\Lambda',\beta}(\gamma) \leq \mathrm{e}^{-\taub(\sfv-\sfu)} ,
	\end{equation}
	where \(\taub\) is the surface tension defined in~\eqref{eq:DefSurfaceTension2}.
	\item\label{property:Mixing} There exist \(c_\beta\) and \( g_\beta \),
	such that, for any \(\Lambda\subset\Lambda'\subset\bbZ^2\) and any \(\gamma\subset\hat\Lambda\),
	\begin{equation}\label{eq:P5}
	\Babs{\frac{q_{\Lambda,\beta}(\gamma)}{q_{\Lambda',\beta}(\gamma)} - 1} \leq c_\beta \sum_{t\in\gamma} \sum_{t'\in\partial^{\rm ext}\Lambda'\setminus\Lambda} \mathrm{e}^{-g_\beta\normII{t-t'}} .
	\end{equation}
	Furthermore,
	\begin{equation}\label{eq:cbeta-nubeta}
		\lim_{\beta\to\infty} c_\beta = 0\quad  \text{ and }\quad
		\lim_{\beta\to\infty} g_\beta = \infty .
	\end{equation}
	(See \cite[Lemma~6.8]{Pfister-Velenik_1999}.)
	\item\label{property:Concat}
	Let \(\Lambda\subset\bbZ^2\) and \(\sfu,\sfv,t\in\Lambda^*\) be distinct dual vertices.
	Two paths \(\gamma_1\in\calC_{\sfu,t}\) and \(\gamma_2\in\calC_{t,\sfv}\) are \emph{compatible} if they are edge-disjoint and applying the deformation rule to their union leaves the two paths unchanged.
	We then denote by \(\gamma_1\circ\gamma_2\in\calC_{\sfu,\sfv}\) the concatenation of the two paths. This extends inductively to an arbitrary finite number of paths.
	One then has the property~\cite[Lemma~6.4]{Pfister-Velenik_1999}
	\begin{equation}\label{eq:LowerBndProductQ}
		q_{\Lambda,\beta}(\gamma_1\circ\dots\circ\gamma_n) \geq \prod_{i=1}^n q_{\Lambda,\beta}(\gamma_i).
	\end{equation}
	\item\label{property:CondWeights}
	Given a concatenation \(\gamma_1\circ \gamma_2\),
	we can define the conditional weight
	\[
	q_{\Lambda,\beta}(\gamma_1\given\gamma_2)
	\defby
	\frac{q_{\Lambda,\beta}(\gamma_1\circ\gamma_2)}{q_{\Lambda,\beta}
		(\gamma_2)}
		=
	q_{\Lambda\setminus\Delta(\gamma_2),\beta}(\gamma_1),
	\]
	where \(\Delta(\gamma_2)\) is the \emph{edge-boundary} of \(\gamma_2\) (essentially the edges incident to vertices of \(\gamma_2\), but with some adjustments to take into account the deformation rule; see~\cite[Lemma~6~.4]{Pfister-Velenik_1999} for a precise statement).
	Similarly, given a concatenation \(\gamma_1\circ\gamma_2\circ\gamma_3\), we set
	\[
	q_{\Lambda,\beta}(\gamma_2\given\gamma_1,\gamma_3)
	\defby
	q_{\Lambda\setminus\Delta(\gamma_1)\cup\Delta(\gamma_3),\beta}(\gamma_2).
	\]
\end{enumerate}

\medskip
We will use the following notations in special cases:
\begin{equation}\label{eq:q-weights}
q_{N,\beta} \equiv q_{\BOX_N,\beta},\qquad
q_{\beta}   \equiv q_{\bbZ^2,\beta}.
\end{equation}
We will also write \(\calC_N \equiv \calC_{\lc{N},\rc{N}}\).
Note, in particular, that any path \(\gamma\in \calC_N\) that does not stay inside \(\BOXs_N\) satisfies \(q_{N, \beta} (\gamma )=0\) (by~\eqref{property:SupportedInVol}).

The weight \(q_{N,\beta}\) is related to our problem through the following identity~{\cite[(2.10), (6.8), (6.11)]{Pfister-Velenik_1999}}: for any \(\gamma\in\calC_N\)
\begin{equation}\label{eq:q-mu-form}
	q_{N,\beta}(\gamma)
	=
	\frac{\PF_{\BOXN;\beta,0}^\pm}{\PF_{\BOXN;\beta,0}^+} \, \mu_{\BOXN;\beta,0}^\pm (\gamma)
\end{equation}
(\(\mu_{\BOXN;\beta,0}^\pm (\gamma)\) denoting, of course, the probability that \(\gamma\) is the realization of the interface connecting \(\lc{N}\) and \(\rc{N}\) when \(h=0\)).
In particular, returning to our geometric setting, property~\ref{property:MonotVolSum} above implies  that
\begin{equation}\label{eq:sub-ratio-pf}
\frac{\PF_{\BOXN;\beta,0}^{\pm}}{\PF_{\BOXN;\beta,0}^+} \leq \mathrm{e}^{-\taub(\rc{N}-\lc{N})}.
\end{equation}
In fact, Lemmas~3.2 and~3.3 in~\cite{Ioffe+Ott+Velenik+Wachtel-20} imply that there exist two constants
\(0 < \underline{c}_\beta \leq \overline{c}_\beta < \infty\), such that
\begin{equation}\label{eq:ratio-q-mu}
	\frac{\underline{c}_\beta}{N^{3/2}}
	\leq
	\frac{\mathrm{e}^{2\taub N}\PF_{\BOXN;\beta,0}^{\pm}}{\PF_{\BOXN;\beta,0}^+}
	=
	\frac{\mathrm{e}^{2\taub N}q_{N,\beta}(\gamma)}{\mu_{\BOXN;\beta,0}^\pm (\gamma)}
	\leq
	\frac{\overline{c}_\beta}{N^{3/2}} .
\end{equation}
for all \(N\) sufficiently large and all \(\gamma\in \calC_N\).

Note also that the following conditional version of~\eqref{eq:q-mu-form} holds:
\[
	\mu_{\BOXN;\beta,0}^\pm (\gamma_2\given \gamma_1 , \gamma_3)
	=
	\frac{q_{\beta , N} (\gamma_2\given \gamma_1 , \gamma_3)}{\sum_{\gamma^\prime} q_{\beta , N} (\gamma^\prime\given \gamma_1 , \gamma_3)} .
\]

\medskip
Let \(\gamma\) be one possible realization of the open contour in the box \(\BOX_N\). Denote by \(\Omega_{\BOX_N}^\pm[\gamma]\) the set of all configurations in \(\Omega_{\BOX_N}^\pm\) in which the open contour is \(\gamma\). Let
\begin{align}
\Delta^+_N[\gamma] &\defby \setof{i\in\BOX_N}{\forall\sigma\in\Omega_{\BOX_N}^\pm[\gamma],\,\sigma_i = 1},\notag\\
\Delta^-_N[\gamma] &\defby \setof{i\in\BOX_N}{\forall\sigma\in\Omega_{\BOX_N}^\pm[\gamma],\,\sigma_i = -1}\label{eq:DeltaMinus}
\end{align}
be the sets of all vertices of the box \(\BOX_N\) at which the spins take deterministically the value \(+1\), resp.\ \(-1\), when \(\gamma\) is the open contour.
We then denote by \(\BOX_N^+[\gamma]\), resp.\ \(\BOX_N^-[\gamma]\), the union of all the components of \(\BOX_N\setminus(\Delta^+_N[\gamma]\cup\Delta^-_N[\gamma])\) that are bordered by \(+\) spins, resp.\ \(-\) spins, in any configuration in \(\Omega_{\BOX_N}^\pm[\gamma]\).
\begin{figure}[t]
	\centering
	\includegraphics{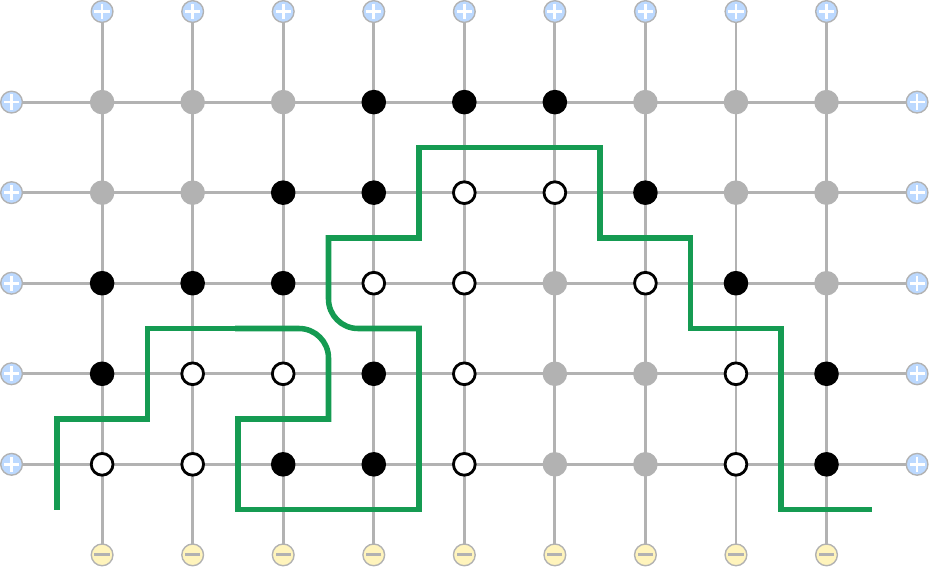}
	\caption{A realization of the contour \(\gamma\) and the corresponding sets \(\Delta^+[\gamma]\) (black nodes) and \(\Delta^-[\gamma]\) (white nodes).}
	\label{fig:DeltaPMgamma}
\end{figure}

\medskip
In view of the deformation rules given above, it is natural to define a compatible notion of connectivity for sets of vertices in \(\bbZ^2\).

\begin{definition}
	\label{def:s-path}
	We shall say that two vertices \(x,y\in\bbZ^2\) are s-connected if either \(\normII{y-x}=1\), or \(\normII{y-x}=\sqrt{2}\) and the two vertices are oriented NW-SE (see Fig.~\ref{fig:S-conn}). An s-path is then a sequence \(x_1,\dots,x_n\) of vertices in \(\bbZ^2 \) such that \(x_{k}\) and \(x_{k-1}\) are s-connected for each \(n\geq k\geq 2\).
	Observe that a contour (in particular, the open contour \(\gamma\)) can never cross an s-path along which the spins take a constant value.
\end{definition}
\begin{figure}[ht]
	\centering
	\resizebox{2cm}{!}{\includegraphics{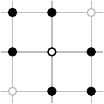}}
	\caption{The black nodes are the vertices that are s-connected to the central vertex (white node).}
	\label{fig:S-conn}
\end{figure}

It follows, for instance, from the skeleton calculus of~\cite{Campanino-Ioffe-Velenik_2003} that, for any \(\beta>\betac\), there exists \(c=c(\beta)>0\) such that
\begin{equation}\label{eq:expdecaySconn}
	\mu_{\BOXN;\beta,0}^+(\text{\(i\) and \(j\) are connected by an s-path of \(-\) spins}) \leq \mathrm{e}^{-c\normII{j-i}} .
\end{equation}

\section{Localization of \(\gamma\), absence of intermediate contours and tilted-area approximation}
\label{sec:NoIntermediateContours}

In this section, we first prove a rough upper bound which yields localization of the interface \(\gamma\) on macroscopic scale; this is the content of Lemma~\ref{lem:RoughLocalizationInterface} below.
This leads to a reduction to the so-called \emph{phase of small contours} in Subsection~\ref{sub:Ph-sc} and then to the crucial tilted-area representation~\eqref{eq:Comp-BN-bound} of interface probabilities.
The last result of this section is Proposition~\ref{prop:C-short} which warrants a linear (in \(N\)) upper bound on the interface length \(\abs{\gamma}\).

\subsection{Localization of \(\gamma\) on the macroscopic scale}
\label{sub:gamma-loc}

Consider the events
\[
	A_N^\delta
	\defby
	\bsetof{\sigma\in\Omega_{\BOX_N}^\pm}{\exists t\in \gamma\ \text{such that }t_2 >\delta N} .
\]
\begin{lemma}\label{lem:RoughLocalizationInterface}
Let \(\beta>\betac\) and \(1>\delta>0\). There exists a positive constant \(c(\beta)\)
such that, for any \(N\geq 1\) and any \(\lambda\geq 0\),
\[
	\mu_{\BOXN;\beta,\lambda/N}^\pm (A_N^\delta)
	\leq
	\mu_{\BOXN;\beta,0}^\pm (A_N^\delta)
	\leq
	\mathrm{e}^{-c(\beta)\delta^2 N} .
\]
\end{lemma}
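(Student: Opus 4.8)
\emph{First inequality.} This follows from the FKG inequality once one observes that $A_N^\delta$ is a \textbf{decreasing} event: replacing some $-1$ spins by $+1$ spins can only shrink the $s$-connected (Definition~\ref{def:s-path}) cluster of $-1$ spins attached to the bottom of the box, hence can only lower the maximal height attained by the open contour $\gamma$. Since $\lambda\geq0$, the function $\sigma\mapsto\exp\bigl(\tfrac{\lambda}{N}\sum_{i\in\BOX_N}\sigma_i\bigr)$ is increasing, so applying FKG to this function and to $\1_{(A_N^\delta)^\comp}$ gives
\[
\mu_{\BOXN;\beta,\lambda/N}^\pm(A_N^\delta)
=
\frac{\mu_{\BOXN;\beta,0}^\pm\bigl(\1_{A_N^\delta}\,\mathrm{e}^{\frac{\lambda}{N}\sum_{i\in\BOX_N}\sigma_i}\bigr)}{\mu_{\BOXN;\beta,0}^\pm\bigl(\mathrm{e}^{\frac{\lambda}{N}\sum_{i\in\BOX_N}\sigma_i}\bigr)}
\leq
\mu_{\BOXN;\beta,0}^\pm(A_N^\delta)
\]
(equivalently, $\mu_{\BOXN;\beta,\lambda/N}^\pm$ stochastically dominates $\mu_{\BOXN;\beta,0}^\pm$ by monotonicity in the magnetic field, and $A_N^\delta$ is decreasing).

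\emph{Second inequality.} I would argue entirely within the random-line representation at $h=0$. If the open contour reaches a height $>\delta N$, then — since $\gamma$ starts at $\lc N$ (at height $-\tfrac12$) and consecutive dual vertices of $\gamma$ differ in height by at most~$1$ — it must pass through some vertex $t\in\BOXs_N$ on the horizontal line at height $h_0:=$ the smallest half-integer exceeding $\delta N$, and $h_0\leq N+\tfrac12$ since $\delta<1$. Hence, by a union bound over the at most $2N+2$ such vertices,
\[
\mu_{\BOXN;\beta,0}^\pm(A_N^\delta)
\leq
\sum_{t\in\BOXs_N:\,t_2=h_0}\mu_{\BOXN;\beta,0}^\pm(\gamma\ni t).
\]
Summing~\eqref{eq:q-mu-form} over $\gamma\in\calC_N$ identifies $\sum_\gamma q_{N,\beta}(\gamma)=\PF_{\BOXN;\beta,0}^\pm/\PF_{\BOXN;\beta,0}^+$, so $\mu_{\BOXN;\beta,0}^\pm(\gamma\ni t)=\bigl(\sum_{\gamma\ni t}q_{N,\beta}(\gamma)\bigr)/\bigl(\sum_\gamma q_{N,\beta}(\gamma)\bigr)$. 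Bounding the numerator by property~\ref{property:BK} followed by~\eqref{eq:Prop4}, and the denominator from below by $\underline c_\beta N^{-3/2}\mathrm{e}^{-2\taub N}$ using~\eqref{eq:ratio-q-mu} (valid for $N$ large), one obtains
\[
\mu_{\BOXN;\beta,0}^\pm(\gamma\ni t)
\leq
\frac{N^{3/2}}{\underline c_\beta}\,\mathrm{e}^{\,2\taub N-\taub(t-\lc N)-\taub(\rc N-t)}.
\]

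The gain now comes from the sharp triangle inequality~\eqref{eq:SharpTriangleInequality} applied with $x=t-\lc N$ and $y=\rc N-t$, so that $x+y=(2N+1)\sfe_1$ and $\taub(x+y)=(2N+1)\taub$ (using $\taub(\sfe_1)=\taub(\sfe_2)=\taub$ by the lattice symmetry). Writing $H:=t_2+\tfrac12>\delta N$ and applying Minkowski's inequality to the planar vectors $(a,H)$, $(b,H)$ with $a,b\geq0$, $a+b=2N+1$, one gets $\|x\|_2+\|y\|_2\geq\sqrt{(2N+1)^2+4H^2}$, and then $\sqrt{(2N+1)^2+4H^2}-(2N+1)\geq H^2/(2N+1)\geq \delta^2N^2/(3N)$ for $N\geq1$. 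Hence $\taub(t-\lc N)+\taub(\rc N-t)\geq 2\taub N+\tfrac\kappa3\delta^2N$, and so
\[
\mu_{\BOXN;\beta,0}^\pm(A_N^\delta)\leq(2N+2)\,\frac{N^{3/2}}{\underline c_\beta}\,\mathrm{e}^{-\frac\kappa3\delta^2N}\leq C_\beta\,N^{5/2}\,\mathrm{e}^{-\frac\kappa3\delta^2N}
\]
for $N$ large. For fixed $\delta\in(0,1)$ this is $\leq\mathrm{e}^{-c(\beta)\delta^2N}$ for any $c(\beta)<\kappa/3$ once $N\geq N_0(\beta,\delta)$; for the finitely many remaining $N$ one uses that $(A_N^\delta)^\comp$ carries positive weight (e.g.\ the flat interface along $\calL$), so $\mu_{\BOXN;\beta,0}^\pm(A_N^\delta)$ is bounded away from $1$, and shrinks $c(\beta)$ accordingly.

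The only substantive inputs are the polynomial asymptotics~\eqref{eq:ratio-q-mu} of the partition-function ratio (from~\cite{Ioffe+Ott+Velenik+Wachtel-20}) and the sharp triangle inequality~\eqref{eq:SharpTriangleInequality}; everything else is bookkeeping. The step requiring the most care is extracting the Gaussian-type cost $\tfrac\kappa3\delta^2N$ from~\eqref{eq:SharpTriangleInequality} with constants uniform in $N$ and checking that it dominates the polynomial prefactor $N^{5/2}$ — which is precisely why $\delta$ must be treated as fixed, the estimate genuinely failing in the microscopic regime where $\delta N$ stays of order one. Alternatively, one could obtain the second inequality by invoking a known exponential bound on macroscopic fluctuations of the $h=0$ interface away from the wall.
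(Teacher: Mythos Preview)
Your proof is correct and follows essentially the same route as the paper: monotonicity (via FKG) for the first inequality, and a union bound over high vertices combined with property~\ref{property:BK}, the surface-tension upper bound~\eqref{eq:Prop4}, and the sharp triangle inequality~\eqref{eq:SharpTriangleInequality} for the second. The only cosmetic differences are that the paper sums over \emph{all} $t$ with $t_2>\delta N$ (rather than reducing to a single horizontal line) and writes the normalization as $\mathrm{e}^{(1+\smo{1})\taub(\rc{N}-\lc{N})}$ rather than invoking~\eqref{eq:ratio-q-mu} explicitly; your version is in fact slightly more explicit about the polynomial prefactor and its absorption. Your closing caveat about $\delta$ being fixed is apt and matches how the lemma is actually used downstream.
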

\begin{proof}
First, observe that the event \(A_N^\delta\) occurs if and only if an s-path of \(-\)-spins connects
the vertex \((-N,-1)\) to a vertex of \(\setof{i=(i_1,i_2)\in\BOX_N}{i_2> \delta N}\) and is thus
clearly non-increasing.
Since we take \(\lambda \geq 0\), the FKG inequality thus implies that
\[
	\mu_{\BOXN;\beta,\lambda/N}^\pm (A_N^\delta) \leq \mu_{\BOXN;\beta,0}^\pm (A_N^\delta) .
\]
Using now~\ref{property:BK}, \ref{property:MonotVolSum} and the sharp triangle inequality~\eqref{eq:SharpTriangleInequality}, we have
\begin{align*}
	\mu_{\BOXN;\beta,0}^\pm (A_N^\delta)
	&\leq
	\sum_{\substack{t\in\BOX_N\\ {t_2>\delta N  } }}\mu_{\BOXN;\beta,0}^\pm ( \gamma\ni t ) \\
	&\leq
	\mathrm{e}^{(1+\smo{1}) \st(x_r -x_l )} \sum_{\substack{t\in\BOX_N\\
	{t_2 > \delta N }}} \mathrm{e}^{- \st(t-x_l) - \st(x_r-t) } \\
	&\leq
	\mathrm{e}^{-c(\beta)N\delta^2} .\qedhere
\end{align*}
\end{proof}

\subsection{Reduction to the phase of small contours}
\label{sub:Ph-sc}

Using Lemma~\ref{lem:RoughLocalizationInterface}, we can prove the first main result of this section:
all contours (except \(\gamma\)) have diameter at most \(\bgo{\log N}\).
\begin{definition}
		\label{def:restricted_contour_phase}
		Given \(\kappa\), denote by \(C_\kappa\) the event that all contours except \(\gamma\) have diameter at most \(\kappa\log N\).
		Define also the \emph{restricted contour phase}
		\begin{equation}\label{eq:kappa-phase}
		\mu_{\BOXN;\beta,\lambda/N}^{\pm,\kappa}(\cdot)
		\defby
		\mu_{\BOXN;\beta,\lambda/N}^\pm (\,\cdot \,|\, C_\kappa).
		\end{equation}
\end{definition}
\begin{proposition}\label{prop:no_large_contours}
	For any \(\beta>\betac\), there exists \(\kappa_0(\beta)<\infty\) such that,
	for all \(\kappa>\kappa_0(\beta)\) and for any \(\lambda\geq 0\) fixed,
	\[
	\lim_{N\to\infty}
	\mu_{\BOXN;\beta,\lambda/N}^\pm (C_\kappa) = 1 .
	\]
	Consequently, once \(\kappa>\kappa_0(\beta)\), one can construct a coupling \(\Psi_N^\kappa\) of the measures \(\mu_{\BOXN;\lambda/N}^{\pm}\) and \(\mu_{\BOXN;\lambda/N}^{\pm , \kappa}\) such that, if \((\sigma,\sigma^\prime) \sim \Psi_N^{\kappa}\),
	\begin{equation}\label{eq:kappa-couple}
		\Psi_N^{\kappa}(\sigma  \neq \sigma^\prime ) \xrightarrow{N\to\infty} 0.
	\end{equation}
\end{proposition}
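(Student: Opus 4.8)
The plan is to first show $\mu_{\BOXN;\beta,\lambda/N}^\pm(C_\kappa)\to1$ and then to deduce~\eqref{eq:kappa-couple} by a standard coupling. By Lemma~\ref{lem:RoughLocalizationInterface} we may, at the cost of an error $\mathrm{e}^{-c(\beta)\delta^2 N}$, work on the complement of $A_N^\delta$; there the open contour $\gamma$ stays below height $\delta N$, so $\BOX_N^-[\gamma]\subset\{i\in\BOX_N:i_2\leq\delta N\}$ and $\abs{\BOX_N^-[\gamma]}\leq\delta(2N+1)N$. Every closed contour of a configuration in $\Omega_{\BOX_N}^\pm[\gamma]$ lies entirely inside $\BOX_N^+[\gamma]$ or entirely inside $\BOX_N^-[\gamma]$, and conditionally on $\gamma$ the spins in these two regions form independent Ising measures with, respectively, $+$-type and $-$-type boundary conditions (inherited from the frozen spins $\Delta_N^\pm[\gamma]$ and from $\eta^\pm$) and field $h=\lambda/N$. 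It therefore suffices to bound, uniformly over such $\gamma$, the conditional probability of a closed contour of diameter $>\kappa\log N$ in each region, and then to take $\delta=\delta(\beta,\lambda)$ small and $\kappa=\kappa(\beta)$ large.

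\emph{The region $\BOX_N^+[\gamma]$.} Here the plus phase is stable and the work is routine. Since the presence of a fixed closed contour is a decreasing event, FKG together with monotonicity in $h\geq0$ bounds its conditional probability by the same probability at $h=0$; and for the field-free $+$ phase the skeleton calculus of~\cite{Campanino-Ioffe-Velenik_2003} (cf.~\eqref{eq:expdecaySconn}) gives, uniformly over finite regions carrying a $+$-type boundary condition, that a closed contour of diameter $D$ through a fixed vertex has probability at most $\mathrm{e}^{-c(\beta)D}$. A union bound over the $\bgo{N^2}$ vertices of $\BOX_N$ then contributes $\bgo{N^{2-c(\beta)\kappa}}\to0$ as soon as $\kappa>\kappa_0(\beta):=2/c(\beta)$.

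\emph{The region $\BOX_N^-[\gamma]$.} This is the crux: the spins see $-$ boundary conditions and a destabilising field $h=\lambda/N>0$, so monotonicity does not reduce us to $h=0$. Instead I would estimate directly the conditional probability that a given closed contour $\hat\gamma\subset\BOX_N^-[\gamma]$ is present — necessarily the boundary of a droplet of $+$ enclosing a region of area $a_{\hat\gamma}$ — by comparing the conditional measure at $h=\lambda/N$ with the one at $h=0$, as in Proposition~\ref{prop:Comp-BN-bound}: the normalisation of the corresponding change of measure is controlled, uniformly in $N$ and in the random shape of $\BOX_N^-[\gamma]$, by the surface-order large-deviation estimates for the magnetisation of the (metastable) $-$ phase of~\cite{Ioffe-Schonmann_1998}. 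This is where the smallness of $\delta$ is used: by Lemma~\ref{lem:RoughLocalizationInterface} the region $\BOX_N^-[\gamma]$ has area at most $\delta(2N+1)N$ and sits inside $\{i_2\leq\delta N\}$, hence is too thin to accommodate a supercritical droplet in the sense of~\cite{Schonmann+Shlosman-96}, which is what makes the $-$-phase estimates legitimate. One obtains, up to a surface-order correction,
\[
\mu_{\BOXN;\beta,\lambda/N}^\pm\bigl(\hat\gamma\text{ is a contour}\bgiven\gamma\bigr)
\;\lesssim\;
\exp\Bigl(\tfrac{2\lambda m^*_\beta}{N}\,a_{\hat\gamma}\Bigr)\,
\mu_{\BOX_N^-[\gamma];\beta,0}^-\bigl(\hat\gamma\text{ is a contour}\bigr) .
\]
Since $\hat\gamma$ fits both in a box of side $\diam(\hat\gamma)$ and inside $\BOX_N^-[\gamma]$, we have $a_{\hat\gamma}\leq\min\bigl(\diam(\hat\gamma)^2,\delta(2N+1)N\bigr)$; a short case analysis according to whether $\diam(\hat\gamma)$ is below or above $\sqrt{2\delta}\,N$ then shows that, once $\delta$ is small enough (depending only on $\beta$ and $\lambda$), the magnetic tilt $\tfrac{2\lambda m^*_\beta}{N}a_{\hat\gamma}$ is at most a fixed fraction of the exponential decay rate of $\mu_{\BOX_N^-[\gamma];\beta,0}^-(\hat\gamma\text{ is a contour})$ coming from the skeleton calculus of~\cite{Campanino-Ioffe-Velenik_2003} (which, like the sharp triangle inequality~\eqref{eq:SharpTriangleInequality} that underlies it, is stable under such a small perturbation and under the surface-order error). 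Hence $\mu_{\BOXN;\beta,\lambda/N}^\pm(\hat\gamma\text{ is a contour}\bgiven\gamma)\lesssim\mathrm{e}^{-c'(\beta)\diam(\hat\gamma)}$, and summing over closed contours and over the $\bgo{N^2}$ positions gives $\bgo{N^{2-c'(\beta)\kappa}}\to0$ for $\kappa$ large.

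Combining the three estimates yields $\mu_{\BOXN;\beta,\lambda/N}^\pm(C_\kappa)\to1$. For the coupling, take $\Psi_N^\kappa$ to be the maximal coupling of $\mu_{\BOXN;\beta,\lambda/N}^\pm$ and $\mu_{\BOXN;\beta,\lambda/N}^{\pm,\kappa}=\mu_{\BOXN;\beta,\lambda/N}^\pm(\,\cdot\bgiven C_\kappa)$: draw $\sigma\sim\mu_{\BOXN;\beta,\lambda/N}^\pm$, set $\sigma'=\sigma$ if $\sigma\in C_\kappa$, and otherwise draw $\sigma'\sim\mu_{\BOXN;\beta,\lambda/N}^{\pm,\kappa}$ independently; then $\Psi_N^\kappa(\sigma\neq\sigma')$ equals the total variation distance between the two measures, which for a conditioned measure is bounded by $\mu_{\BOXN;\beta,\lambda/N}^\pm(C_\kappa^\comp)/\mu_{\BOXN;\beta,\lambda/N}^\pm(C_\kappa)\to0$, giving~\eqref{eq:kappa-couple}. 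The step I expect to be the main obstacle is the estimate inside $\BOX_N^-[\gamma]$: one must run a Peierls/random-line argument in a region of random shape in which the ambient phase is only metastable, and make the book-keeping of the magnetic gain against the surface cost uniform over all contour scales, from $\log N$ up to order $N$; the two ingredients that make this work are the rough area bound of Lemma~\ref{lem:RoughLocalizationInterface}, which caps $a_{\hat\gamma}$, and the surface-order control of the magnetisation of~\cite{Ioffe-Schonmann_1998}, which turns the field into a clean area tilt, the role of a small $\delta$ being exactly to leave no room for a supercritical droplet.
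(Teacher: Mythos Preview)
Your overall plan matches the paper's: condition on the open contour \(\gamma\), split the closed contours into those in \(\BOX_N^+[\gamma]\) and those in \(\BOX_N^-[\gamma]\), use FKG to reduce the \(+\) region to \(h=0\), and in the \(-\) region control the destabilising field by the area bound coming from Lemma~\ref{lem:RoughLocalizationInterface}. The coupling at the end is also the same. The difference lies in how the comparison bound in \(\BOX_N^-[\gamma]\) is obtained.

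The paper does \emph{not} appeal to Proposition~\ref{prop:Comp-BN-bound} or to the Ioffe--Schonmann expansion here; in fact Proposition~\ref{prop:Comp-BN-bound} is proved \emph{using} Proposition~\ref{prop:no_large_contours} (it is stated for the restricted measure \(\mu^{\pm,\kappa}\)), so invoking it, even ``as in'', risks circularity, and the surface-order control of~\cite{Ioffe-Schonmann_1998} for the \emph{metastable} partition function in a region of arbitrary shape is not something you can quote off the shelf. Instead, the paper restricts to \emph{external} contours \(\hat\gamma\), writes the probability that \(\hat\gamma\) is external as an explicit ratio of partition functions, and observes that the event \(\{\sigma_i=-1\ \forall i\sim\hat\gamma\}\cap\{\hat\gamma\text{ is external}\}\) is non-increasing; a single FKG step then gives the clean inequality
\[
\mu_{\BOX_N^-[\gamma];\beta,\lambda/N}^-(\hat\gamma\text{ external})
\;\leq\;
\mu_{\BOX_N^-[\gamma];\beta,0}^-(\hat\gamma\text{ external})\,
\mathrm{e}^{2\lambda\,\abs{\overline{\inte\hat\gamma}}/N},
\]
with no \(m^*_\beta\) and no expansion. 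The paper also uses the sharper geometric bound \(\abs{\overline{\inte\hat\gamma}}\leq D\,(D\wedge\delta N)\) (coming from the fact that \(\BOX_N^-[\gamma]\) lies in the strip \(\{i_2\leq\delta N\}\)), which yields the condition \(\delta<c/(2\lambda)\) linear in \(\delta\); your bound \(a_{\hat\gamma}\leq\min(D^2,2\delta N^2)\) and the ensuing case split give only a \(\sqrt{\delta}\) condition, but that still suffices. In short: your architecture is right, but for the \(-\) region replace the appeal to Proposition~\ref{prop:Comp-BN-bound}/\cite{Ioffe-Schonmann_1998} by the direct FKG computation on external contours---it is both simpler and avoids the logical loop.
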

\begin{remark}\label{rem:kappa}
	By~\eqref{eq:kappa-couple}, we are entitled not to distinguish between the original Gibbs measure \(\mu_{\BOXN;\lambda/N}^{\pm}\) and its \(\kappa\log N\) cut-off \(\mu_{\BOXN;\lambda/N}^{\pm, \kappa}\).
	We shall stress the super-index \(\kappa\) whenever this will facilitate references to computations that can be found in the literature.
\end{remark}
\begin{proof}[Proof of Proposition~\ref{prop:no_large_contours}]
We first write
\[
	\mu_{\BOXN;\beta,\lambda/N}^\pm (C_\kappa^{\sfc})
	\leq
	\sum_\gamma \mu_{\BOXN;\beta,\lambda/N}^\pm (\gamma)
	\Bigl\{
	\mu_{\BOX_N^+[\gamma];\beta,\lambda/N}^+ (C_\kappa^{\sfc})
	+ \mu_{\BOX_N^-[\gamma];\beta,\lambda/N}^- (C_\kappa^{\sfc})
	\Bigr\} .
\]
Now, observe that \(C_\kappa^{\sfc}\) occurs in \(\BOX_N^+[\gamma]\)
if and only if the latter set contains an \(s\)-path of \(-\)-spin of diameter
at least \(\kappa\log N\).
Since this event is obviously non-increasing, we conclude that
\[
	\mu_{\BOX_N^+[\gamma];\beta,\lambda/N}^+ (C_\kappa^{\sfc})
	\leq
	\mu_{\BOX_N^+[\gamma];\beta,0}^+ (C_\kappa^{\sfc}) ,
\]
and the latter probability tends to \(0\) as \(N\to\infty\) (for \(\kappa\) large enough)
by~\eqref{eq:expdecaySconn}.

Let us thus turn to the more delicate term \(\mu_{\BOX_N^-[\gamma];\beta,\lambda/N}^- (C_\kappa)\).
Thanks to Lemma~\ref{lem:RoughLocalizationInterface}, we can assume without loss of generality
that, for some fixed \(\delta>0\) (which we will choose small enough below), \(t_2 \leq \delta N\) for any \(t\in\gamma\).

Evidently,
\[
	\mu_{\BOX_N^-[\gamma];\beta,\lambda/N}^- (C_\kappa^{\sfc})
	\leq
	\sum_{D\geq\kappa\log N} \sum_{\hat\gamma:\,\diam\hat\gamma = D}
	\mu_{\BOX_N^-[\gamma];\beta,\lambda/N}^-(\hat\gamma \text{ is an external contour}) ,
\]
where \(\hat\gamma\) is an external contour if no contour in \(\BOX_N^-[\gamma]\) or \(\BOX_N^+[\gamma]\) surrounds \(\inte\hat{\gamma}\).

Let us introduce some notation.
First, given realizations of the open contour \(\gamma\) and a closed contour \(\hat\gamma\) in \(\BOX_N^-[\gamma]\), let us denote by \(\sigma^{\gamma,\hat\gamma}\) the configuration in \(\Omega_{\BOX_N}^\pm\) whose set of contours is exactly \(\{\gamma,\hat\gamma\}\).
Let also \(\Omega_{\BOX_N}^\pm[\gamma,\hat\gamma]\) be the set of all configurations in \(\Omega_{\BOX_N}^\pm\) having \(\gamma\) and \(\hat\gamma\) as two of their contours.
We then set
\begin{gather*}
	\Delta_N^-\equiv\Delta_N^-[\gamma,\hat\gamma] \defby \bsetof{i\in\BOX_N^-[\gamma]}{\forall\sigma\in\Omega_{\BOX_N}^\pm[\gamma,\hat\gamma],\,\sigma^{\vphantom{\hat\gamma}}_i=\sigma^{\gamma,\hat\gamma}_i=-1},\\
	\Delta_N^+\equiv\Delta_N^+[\gamma,\hat\gamma] \defby \bsetof{i\in\BOX_N^-[\gamma]}{\forall\sigma\in\Omega_{\BOX_N}^\pm[\gamma,\hat\gamma],\,\sigma^{\vphantom{\hat\gamma}}_i=\sigma^{\gamma,\hat\gamma}_i=1},
\end{gather*}
and define (see Fig.~\ref{fig:IntExtContour})
\[
	\ext\hat\gamma \defby \bsetof{i\in\BOX_N^-[\gamma]}{\sigma^{\gamma,\hat\gamma}_i = -1}\setminus\Delta_N^-,\qquad
	\inte\hat\gamma \defby \bsetof{i\in\BOX_N^-[\gamma]}{\sigma^{\gamma,\hat\gamma}_i = 1}\setminus\Delta_N^+.
\]
\begin{figure}[t]
	\centering
	\resizebox{\textwidth}{!}{\includegraphics{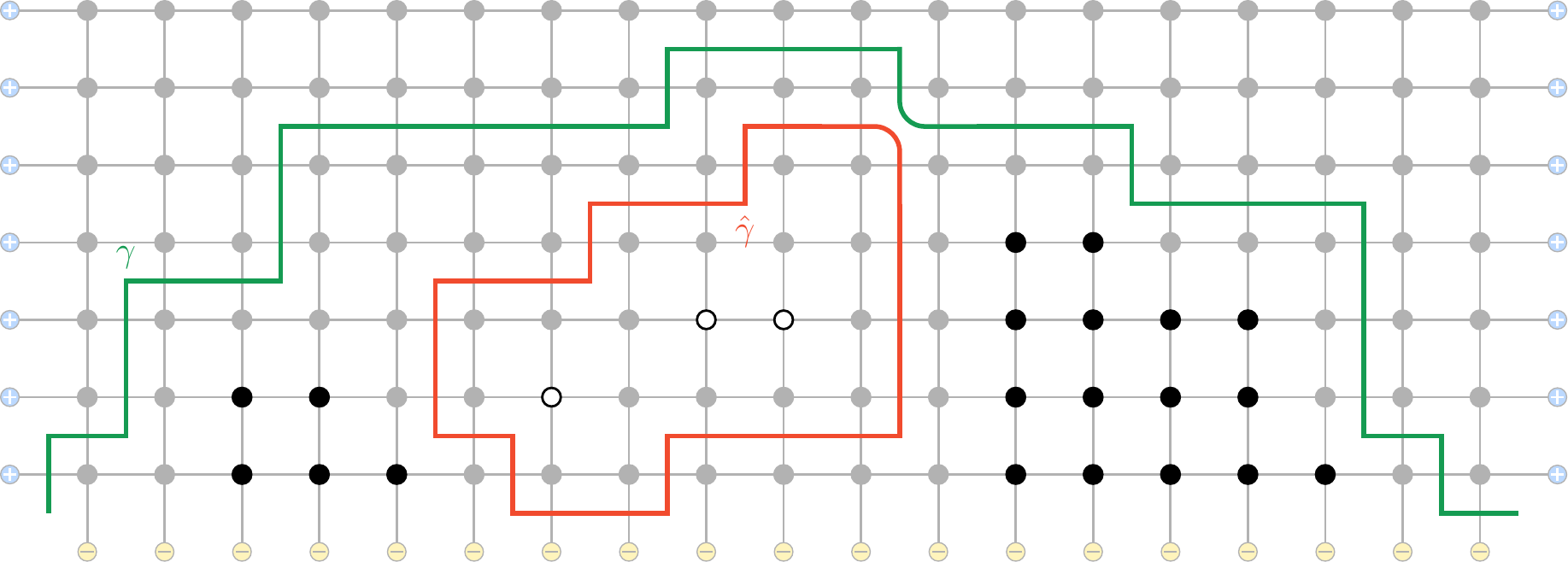}}
	\caption{The sets \(\ext\hat\gamma\) (black nodes) and \(\inte\hat\gamma\) (white nodes) associated to an (external) contour \(\hat\gamma\) in \(\BOX_N^-[\gamma]\).}
	\label{fig:IntExtContour}
\end{figure}%
Finally, we let \(\overline{\inte\hat\gamma}\defby \inte\hat\gamma \cup \Delta_N^+\).
An elementary but crucial observation is that, for any contour \(\hat\gamma\) of diameter \(D\) in \(\BOX_N^-[\gamma]\),
\begin{equation}\label{eq:Vol-delta}
	\abs{\overline{\inte \hat\gamma}} \leq D (D \wedge \delta N).
\end{equation}

Let \(G\) denote the event that the contour \(\hat{\gamma}\) is external.
Note that this is equivalent to saying that there is an s-path of \(-\) spins in \(\BOX_N^-[\gamma]\)
connecting \(\partial\inte\hat{\gamma}\) to  \(\partial\BOX_N\), which is a non-increasing event.
\begin{multline*}
	\mu_{\BOX_N^-[\gamma];\beta,(\lambda/N)}^- (\hat\gamma \text{ is an external contour})\\
	=
	\mathrm{e}^{-2\beta \abs{\hat\gamma}-(\lambda/N)\nabs{\Delta_N^-}+(\lambda/N)\nabs{\Delta_N^+}}\, \frac{\PF_{\ext\hat\gamma;\beta,\lambda/N}^-[G]\,
	\PF_{\inte\hat\gamma;\beta,\lambda/N}^+}{\PF_{\BOX_N^-[\gamma];\beta,\lambda/N}^-} ,
\end{multline*}
where \(\PF_{\ext\hat\gamma;\beta,\lambda/N}^-[G]\) denotes the partition function in
\(\ext\hat\gamma\) with \(-\) boundary condition restricted to configurations in \(G\).
Observe that
\begin{multline*}
	\frac{\PF_{\ext\hat\gamma;\beta,\lambda/N}^-[G]\, \PF_{\inte\hat\gamma;\beta,\lambda/N}^+}{\PF_{\BOX_N^-[\gamma];\beta,\lambda/N}^-}
	=
	\frac{\PF_{\ext\hat\gamma;\beta,\lambda/N}^-[G]\, \PF_{\inte\hat\gamma;\beta,\lambda/N}^-}{\PF_{\BOX_N^-[\gamma];\beta,\lambda/N}^-}
	\frac{\PF_{\inte\hat\gamma;\beta,\lambda/N}^+}{\PF_{\inte\hat\gamma;\beta,\lambda/N}^-}\\
	=
	\mathrm{e}^{{(\lambda/N)\nabs{\Delta_N^-}+(\lambda/N)\nabs{\Delta_N^+}}}\,
	\mu_{\BOX_N^-[\gamma];\beta,\lambda/N}^- \bigl( \{\sigma_i=-1\; \forall i\sim\hat\gamma\} \cap G \bigr) \,
	\frac{\PF_{\inte\hat\gamma;\beta,\lambda/N}^+}{\PF_{\inte\hat\gamma;\beta,\lambda/N}^-} ,
\end{multline*}
where \(i\sim\hat\gamma\) means that \(i\) is one of the spins along \(\hat\gamma\) whose
value is the same in all configurations in which \(\hat\gamma\) is an external contour.
The event \(\{\sigma_i=-1\; \forall i\sim\hat\gamma\}\cap G\) being non-increasing, we conclude that
\begin{align*}
	\mu_{\BOX_N^-[\gamma];\beta,\lambda/N}^- (\hat\gamma &\text{ is an external contour})\\
	&=
	\mathrm{e}^{-2\beta \abs{\hat\gamma}+2(\lambda/N)\nabs{\Delta_N^+}}\,
	\mu_{\BOX_N^-[\gamma];\beta,{\lambda /N}}^- (\sigma_i=-1\; \forall i\sim\hat\gamma; G) \,
	\frac{\PF_{\inte\hat\gamma;\beta,\lambda/N}^+}{\PF_{\inte\hat\gamma;\beta,\lambda/N}^-}\\
	&\leq
	\mathrm{e}^{-2\beta \abs{\hat\gamma}+2(\lambda/N)\nabs{\Delta_N^+}}\,
	\mu_{\BOX_N^-[\gamma];\beta,0}^- (\sigma_i=-1\; \forall i\sim\hat\gamma; G) \,
	\mathrm{e}^{2\lambda\abs{\inte\hat\gamma}/N}\\
	&=
	\mu_{\BOX_N^-[\gamma];\beta,0}^- (\hat\gamma \text{ is an external contour}) \,
	\mathrm{e}^{2\lambda\abs{\overline{\inte\hat\gamma}}/N} .
\end{align*}
Therefore, in view of \eqref{eq:Vol-delta},
\[
	\mu_{\BOX_N^-[\gamma];\beta,\lambda/N}^- (C_\kappa^{\sfc})
	\leq
	\sum_{D\geq\kappa\log N} \! \mathrm{e}^{2\lambda D(D\wedge \delta N)/N} \! \sum_{\hat\gamma:\,\diam\hat\gamma = D} \! \mu_{\BOX_N^-[\gamma];\beta,0}^- (\hat\gamma \text{ is an external contour}) .
\]
The sum over \(\hat\gamma\) is bounded above by \(\mathrm{e}^{-c D}\) for some \(c=c(\beta)>0\) by the usual skeleton upper bounds~\cite{Campanino-Ioffe-Velenik_2003} (provided that \(\kappa\) be large enough).
We thus finally obtain
\[
	\mu_{\BOX_N^-[\gamma];\beta,\lambda/N}^- (C_\kappa)
	\leq
	\sum_{D\geq\kappa\log N} \mathrm{e}^{-(c-2\lambda\delta) D},
\]
which also vanishes as \(N\to\infty\), provided that we choose \(\delta<c/(2\lambda)\).
\end{proof}

\subsection{Tilted-area approximation of \(\mu_{\BOXN;\beta,\lambda/N}^{\pm}(\gamma)\)}
\label{sub:taa}

The reduction to the phase of small contours paves the way for our second main result in this section.
Given an interface \(\gamma\), define
\begin{equation}\label{eq:psi-N}
	\psi_N (\gamma)
	\defby
	\sum_{i\in \BOX_N^+[\gamma]}
	\Bigl\{
	\mu_{\BOXN;\beta,0}^{\pm,\kappa} (\sigma_i \,|\, \gamma) -
	\mu_{\BOXN;\beta,0}^{+,\kappa} (\sigma_i)
	\Bigr\}
	+
	\sum_{i\in \BOX_N^-[\gamma]}
	\Bigl\{
	\mu_{\BOXN;\beta,0}^{\pm,\kappa} (\sigma_i \,|\, \gamma) -
	\mu_{\BOXN;\beta,0}^{-,\kappa} (\sigma_i)
	\Bigr\}.
\end{equation}

By the exponential relaxation of finite-volume expectations towards their pure phase values (see, \textit{e.g.}, \cite{Ioffe-Schonmann_1998}), the correction term \(\psi_N\) in \eqref{eq:psi-N} satisfies the following bound: There exists \(\nu_\beta <\infty\) such that
\begin{equation}\label{eq:psi-N-bound}
	\abs{\psi_N (\gamma)} \leq \nu_\beta \abs{\gamma},
\end{equation}
for all \(N\) and for any realization of the interface \(\gamma\).

\begin{proposition}\label{prop:Comp-BN-bound}
	The following asymptotic formula holds uniformly in interfaces \(\gamma\) as soon as \(N\) is sufficiently large:
	\begin{equation}\label{eq:Comp-BN-bound}
		\mu_{\BOXN;\beta,\lambda/N}^{\pm , \kappa} (\gamma) \,\bigl(1+\smo{1}\bigr)
		\propto
		\exp\Bigl[ - 2 \frac{\lambda m^*_\beta}{N} \abs{\BOX_N^-[\gamma]} + \frac{\lambda \psi_N(\gamma)}{N}  \Bigr] \;
		\mu_{\BOXN;\beta,0}^{\pm ,\kappa} (\gamma) .
	\end{equation}
\end{proposition}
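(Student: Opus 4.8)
The plan is to obtain~\eqref{eq:Comp-BN-bound} from the change of measure between the fields $\lambda/N$ and $0$. Since $\Ham_{\BOX_N;\beta,\lambda/N}(\sigma)=\Ham_{\BOX_N;\beta,0}(\sigma)-\tfrac{\lambda}{N}\sum_{i\in\BOX_N}\sigma_i$, and since $C_\kappa$ and the event ``the open contour is $\gamma$'' depend only on $\sigma$, one has
\[
\mu^{\pm,\kappa}_{\BOX_N;\beta,\lambda/N}(\gamma)=\mathcal Z_N^{-1}\,
\mu^{\pm,\kappa}_{\BOX_N;\beta,0}\Bigl(e^{\frac{\lambda}{N}\sum_{i\in\BOX_N}\sigma_i}\Bigm|\gamma\Bigr)\,
\mu^{\pm,\kappa}_{\BOX_N;\beta,0}(\gamma),
\]
with $\mathcal Z_N=\mu^{\pm,\kappa}_{\BOX_N;\beta,0}(e^{\frac{\lambda}{N}\sum_i\sigma_i})$ independent of $\gamma$. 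Thus everything reduces to estimating the conditional tilt $\mu^{\pm,\kappa}_{\BOX_N;\beta,0}(e^{\frac{\lambda}{N}\sum_i\sigma_i}\mid\gamma)$ up to a $\gamma$-independent factor and a factor $1+o(1)$ uniform in $\gamma$. Conditionally on $\gamma$ (and on $C_\kappa$) the spins of $\Delta^+_N[\gamma]$, $\Delta^-_N[\gamma]$ are frozen to $\pm1$, while $\BOX_N^+[\gamma]$ and $\BOX_N^-[\gamma]$ carry two \emph{independent} restricted Ising systems with $+$, resp.\ $-$, boundary condition; hence the tilt factorizes as $e^{\frac{\lambda}{N}(|\Delta^+_N[\gamma]|-|\Delta^-_N[\gamma]|)}$ times $\prod_{\bullet\in\{+,-\}}\mu^{\bullet,\kappa}_{\BOX_N^\bullet[\gamma];\beta,0}\bigl(e^{\frac{\lambda}{N}\sum_{i\in\BOX_N^\bullet[\gamma]}\sigma_i}\bigr)$.

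For each of the last two factors I would write $\mu^{\bullet,\kappa}_{\Lambda;\beta,0}(e^{\frac{\lambda}{N}\sum_\Lambda\sigma})=\exp\bigl(\int_0^{\lambda/N}\mu^{\bullet,\kappa}_{\Lambda;\beta,s}(\sum_{i\in\Lambda}\sigma_i)\,\dd s\bigr)$ and expand the integrand to first order in the field. Its $s$-derivative is $\sum_j\mathrm{Cov}_{\mu^{\bullet,\kappa}_{\Lambda;\beta,s}}(\sigma_i,\sigma_j)$, bounded uniformly in $i$, in $\Lambda$ and in $s\in[0,\lambda/N]$ since, for $\beta>\betac$, correlations in the ($\pm$, or restricted) phase decay exponentially, uniformly in $s\ge0$ (Ornstein--Zernike bounds, \cite{Campanino-Ioffe-Velenik_2003,Ioffe-Schonmann_1998}). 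Therefore the second- and higher-order contributions add up to $\tfrac{\lambda^2}{2}\chi_\beta\tfrac{|\BOX_N^+[\gamma]|+|\BOX_N^-[\gamma]|}{N^2}+O(N^{-1})$ up to errors of order $|\gamma|/N^2$; and since $|\BOX_N^+[\gamma]|+|\BOX_N^-[\gamma]|=|\BOX_N|-O(|\gamma|)$, this is a $\gamma$-independent constant (swallowed by $\mathcal Z_N$) plus a negligible remainder.

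The crux is the first-order term, $\tfrac{\lambda}{N}\bigl[(|\Delta^+_N[\gamma]|-|\Delta^-_N[\gamma]|)+\sum_{i\in\BOX_N^+[\gamma]}\mu^{+,\kappa}_{\BOX_N^+[\gamma];\beta,0}(\sigma_i)+\sum_{i\in\BOX_N^-[\gamma]}\mu^{-,\kappa}_{\BOX_N^-[\gamma];\beta,0}(\sigma_i)\bigr]$. Observing that, conditionally on $\gamma$, the region $\BOX_N^+[\gamma]$ carries exactly a $+$ boundary (and likewise on the $-$ side), one has $\mu^{+,\kappa}_{\BOX_N^+[\gamma];\beta,0}(\sigma_i)=\mu^{\pm,\kappa}_{\BOX_N;\beta,0}(\sigma_i\mid\gamma)$; so, by the very definition~\eqref{eq:psi-N}, the two sums equal $\psi_N(\gamma)+\sum_{i\in\BOX_N^+[\gamma]}\mu^{+,\kappa}_{\BOX_N;\beta,0}(\sigma_i)+\sum_{i\in\BOX_N^-[\gamma]}\mu^{-,\kappa}_{\BOX_N;\beta,0}(\sigma_i)$. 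Using the spin-flip symmetry $\mu^{-,\kappa}_{\BOX_N;\beta,0}(\sigma_i)=-\mu^{+,\kappa}_{\BOX_N;\beta,0}(\sigma_i)$, the pointwise identity $\mathbf 1_{\BOX_N^+[\gamma]}-\mathbf 1_{\BOX_N^-[\gamma]}=\mathbf 1_{\BOX_N}-\mathbf 1_{\Delta^+_N[\gamma]\cup\Delta^-_N[\gamma]}-2\,\mathbf 1_{\BOX_N^-[\gamma]}$, and the fact that $\mu^{+,\kappa}_{\BOX_N;\beta,0}(\sigma_i)$ equals $m^*_\beta$ up to corrections exponentially small in $\mathrm{dist}(i,\partial\BOX_N)$, the bracket collapses to $-2m^*_\beta|\BOX_N^-[\gamma]|$, plus the total magnetization of the all-$+$ box (which is $\gamma$-independent), plus a collection of lower-order leftovers. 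Combined with the second-order analysis above and the normalization $\mathcal Z_N$, this gives~\eqref{eq:Comp-BN-bound}.

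The delicate point, and the main obstacle, is to show that all the leftover terms are negligible after multiplication by $\lambda/N$, \emph{uniformly} in $\gamma$: the corrections near $\partial\BOX_N$ coming from both the magnetization replacement and the higher-order field expansion, the contribution of $\Delta^\pm_N[\gamma]$, and in particular the sum $\sum_{i\in\BOX_N^-[\gamma]}(\mu^{+,\kappa}_{\BOX_N;\beta,0}(\sigma_i)-m^*_\beta)$ over the part of $\BOX_N^-[\gamma]$ lying close to the bottom wall. These must be shown to be either $\gamma$-independent (hence absorbed into $\mathcal Z_N$) or of surface order $O(|\gamma|)$ and thus accounted for within $\lambda\psi_N(\gamma)/N$, for which one uses the bound $|\psi_N(\gamma)|\le\nu_\beta|\gamma|$ of~\eqref{eq:psi-N-bound}. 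This is where one invokes the localization inputs (Lemma~\ref{lem:RoughLocalizationInterface} and Propositions~\ref{prop:no_large_contours} and~\ref{prop:ER}) together with quantitative exponential decay of correlations and exponential relaxation of finite-volume Gibbs states to the pure phases, valid for all $\beta>\betac$, from \cite{Campanino-Ioffe-Velenik_2003,Ioffe-Schonmann_1998}; once these estimates are in place the remaining work is routine bookkeeping.
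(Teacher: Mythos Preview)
Your approach is the same as the paper's: tilt the zero-field measure and expand the conditional log-Laplace transform to second order in $g=\lambda/N$ (the paper simply defers this step to~\cite[Section~2.3]{Ioffe-Schonmann_1998}), then reorganize the first-order term via the definition of $\psi_N$. The factorization over $\BOX_N^\pm[\gamma]$ that you spell out is exactly the conditional independence built into $\mu^{\pm,\kappa}_{\BOXN;\beta,0}(\,\cdot\mid\gamma)$, and the third- and higher-order cumulants are $o(1)$ uniformly by the exponential decay of truncated correlations in the small-contour phase.

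There is, however, a gap in your final paragraph. You invoke Lemma~\ref{lem:RoughLocalizationInterface} and Proposition~\ref{prop:ER} to dispose of the leftover terms, but these are \emph{probabilistic} statements about typical interfaces, whereas Proposition~\ref{prop:Comp-BN-bound} asserts a formula valid \emph{pointwise, uniformly in all} $\gamma$; a high-probability event cannot furnish uniform pointwise control. (Proposition~\ref{prop:ER} is moreover proved later in the paper, so citing it here is out of logical order.) The correct argument is simpler and needs no localization of $\gamma$ beyond the $C_\kappa$ restriction already in force: since every open contour connects $\lc{N}$ to $\rc{N}$, one has $\abs{\gamma}\ge 2N+1$, so any $O(N)$ quantity is automatically $O(\abs{\gamma})$. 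In particular your worrisome sum obeys
\[
\Bigl|\sum_{i\in\BOX_N^-[\gamma]}\!\bigl(\mu^{+,\kappa}_{\BOXN;\beta,0}(\sigma_i)-m^*_\beta\bigr)\Bigr|
\le \sum_{i\in\BOX_N}\babs{\mu^{+,\kappa}_{\BOXN;\beta,0}(\sigma_i)-m^*_\beta}
= O(\abs{\partial\BOX_N}) = O(N)\le O(\abs{\gamma})
\]
by exponential relaxation to $m^*_\beta$, and the terms $\abs{\Delta^\pm_N[\gamma]}$ and the boundary corrections to the covariance sum are handled identically. All such leftovers are thus of the same order as $\psi_N$ and are absorbed alongside it; no entropic-repulsion input is required.
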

In view of Remark~\ref{rem:kappa}, it should not cause ambiguity to drop the small-contour upper
index \(\kappa\) on either side of \eqref{eq:Comp-BN-bound}, which we shall do without further comment in the sequel.
\begin{proof}
For any given \(\lambda\),
\[
	\mu_{\BOXN;\beta,\lambda/N}^{\pm, \kappa} (\gamma)
	\propto
	\mu_{\BOXN;\beta,0}^{\pm,\kappa} \Bigl( \exp\Bigl[ \tfrac{\lambda}{N} \sum_{i\in\BOX_N} \sigma_i \Bigr] \Bigm| \gamma \Bigr) \;
	\mu_{\BOXN;\beta,0}^{\pm,\kappa} (\gamma) .
\]
Next, given a magnetic field \(g = g_N \defby \lambda/N\), one can proceed as in~\cite[Section~2.3]{Ioffe-Schonmann_1998} and derive, uniformly in the interface \(\gamma\),
the following expansion of conditional expectations:
\begin{align}
	\log \mu_{\BOXN;\beta,0}^{\pm,\kappa} \bigl( &\mathrm{e}^{ g \sum_{i\in \BOX_N}\sigma_i} \bigm| \gamma \bigr)  +\smo{1} \notag\\
	&\quad=
	g \, \sum_{i\in \BOX_N} \mu_{\BOXN;\beta,0}^{\pm,\kappa} (\sigma_i \,|\, \gamma) + \frac{g^2}{2}\sum_{i,j\in\BOX_N}
	\mu_{\BOXN;\beta,0}^{\pm,\kappa} (\sigma_i ;\sigma_j \,|\, \gamma) \notag\\
	&\quad=
	c_N + g m^*_\beta \abs{\BOX_N} + \frac{g^2}{2} \chi^*_\beta  \abs{\BOX_N}
	- 2 g m^*_\beta \abs{\BOX_N^-[\gamma]} + g \psi_{N} (\gamma),
	\label{eq:exp-g}
\end{align}
where, \(\mu_{\BOXN;\beta,0}^{\pm,\kappa} (\sigma_i ;\sigma_j \,|\, \gamma)\) denotes the covariance between \(\sigma_i\) and \(\sigma_j\) under \(\mu_{\BOXN;\beta,0}^{\pm,\kappa} (\,\cdot \,|\, \gamma)\), \(m^*_\beta\) and \(\chi^*_\beta\) are, respectively, the spontaneous magnetization and the
susceptibility of the infinite-volume plus state, \(c_N\) is a boundary term of order \(N\) coming from the \(\pm\) boundary condition on \(\partial\BOX_N\) and the function \(\psi_N(\gamma)\) is precisely the one defined in~\eqref{eq:psi-N}.
\end{proof}
Next, we will show that
one can restrict attention to interfaces $\gamma$ satisfying \linebreak\(\abs{\gamma}, \psi_N(\gamma) \leq CN\).

\subsection{Control of interface lengths \(\abs{\gamma}\)}
\label{sub:i-length}

\begin{proposition}\label{prop:C-short}
	For any \(\beta>\betac\), and \(\lambda\geq 0\) fixed, there exists \( C = C(\beta,\lambda) < \infty\) such that
	\begin{equation}\label{eq:C-short}
		\lim_{N\to\infty} \mu_{\BOXN;\beta,\lambda/N}^\pm (\abs{\gamma} \leq CN) = 1 .
	\end{equation}
\end{proposition}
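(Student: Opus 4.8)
The plan is to read the statement off from the tilted-area representation~\eqref{eq:Comp-BN-bound}. Write \(W(\gamma)\defby\exp\bigl[-\tfrac{2\lambda m^*_\beta}{N}\abs{\BOX_N^-[\gamma]}+\tfrac{\lambda\psi_N(\gamma)}{N}\bigr]\) and put \(\mathcal N_N\defby\sum_{\gamma\in\calC_N:\,\abs\gamma>CN}W(\gamma)\,\mu_{\BOXN;\beta,0}^{\pm}(\gamma)\) and \(\mathcal D_N\defby\sum_{\gamma\in\calC_N}W(\gamma)\,\mu_{\BOXN;\beta,0}^{\pm}(\gamma)\). Dropping the superscript \(\kappa\) as permitted by the remark following Proposition~\ref{prop:Comp-BN-bound} and using that the error term in~\eqref{eq:Comp-BN-bound} is uniform in \(\gamma\), one gets (after dividing numerator and denominator by the common normalisation) \(\mu_{\BOXN;\beta,\lambda/N}^{\pm}(\abs\gamma>CN)=(1+\smo1)\,\mathcal N_N/\mathcal D_N\). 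I would then bound \(\mathcal D_N\) below and \(\mathcal N_N\) above, the point being that \(\mathcal N_N\) can be made exponentially small in \(N\) by taking \(C\) large, while \(\mathcal D_N\) decays at most exponentially at a \(C\)-independent rate.

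For the denominator I would only use the coarsest deterministic bound on \(W\). Any \(\gamma\) carried by \(\mu_{\BOXN;\beta,0}^{\pm}\) is contained in \(\BOXs_N\) (otherwise \(q_{N,\beta}(\gamma)=0\) by~\ref{property:SupportedInVol} and~\eqref{eq:q-mu-form}), hence \(\abs{\BOX_N^-[\gamma]}\le\abs{\BOX_N}=\bgo{N^2}\), and \(\abs\gamma\) is at most the number of dual edges of \(\BOXs_N\), again \(\bgo{N^2}\); together with \(\abs{\psi_N(\gamma)}\le\nu_\beta\abs\gamma\) from~\eqref{eq:psi-N-bound} this gives \(W(\gamma)\ge\mathrm e^{-c_*(\beta,\lambda)N}\) uniformly in \(\gamma\). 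Since \(\mu_{\BOXN;\beta,0}^{\pm}\) is a probability measure on \(\calC_N\), we conclude \(\mathcal D_N\ge\mathrm e^{-c_*N}\).

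The main work is the upper bound on \(\mathcal N_N\). Using \(\abs{\BOX_N^-[\gamma]}\ge0\) and~\eqref{eq:psi-N-bound} once more, \(W(\gamma)\le\mathrm e^{\lambda\nu_\beta\abs\gamma/N}\), so \(\mathcal N_N\le\sum_{n>CN}\mathrm e^{\lambda\nu_\beta n/N}\,\mu_{\BOXN;\beta,0}^{\pm}(\abs\gamma\ge n)\). The one genuinely nontrivial ingredient I would need here is a uniform-in-\(N\) exponential tail for the length of the \emph{field-free} interface: there exist \(C_1=C_1(\beta)<\infty\) and \(c_1=c_1(\beta)>0\) with \(\mu_{\BOXN;\beta,0}^{\pm}(\abs\gamma\ge n)\le C_1\mathrm e^{-c_1(n-C_1 N)}\) for all \(n\ge0\), \(N\ge1\). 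This is just the statement that a typical field-free interface has linear length; I would import it from the Ornstein--Zernike/skeleton estimates~\cite{Campanino-Ioffe-Velenik_2003,Ott-Velenik_2017} (it is also recovered, in sharper form, in Section~\ref{sec:OZTheory}). I expect this to be the main obstacle: it does not follow from the properties~\ref{property:SupportedInVol}--\ref{property:CondWeights} alone — the pointwise bound~\ref{property:BK} only controls the event \(\{\gamma\ni t\}\), and summing it over \(t\) overcounts the total length by polynomial factors — so genuine OZ input is required.

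Granting this tail estimate, the rest is routine. Taking \(C\ge2C_1\), for \(n>CN\) one has \(n-C_1 N\ge n/2\), hence \(\mu_{\BOXN;\beta,0}^{\pm}(\abs\gamma\ge n)\le C_1\mathrm e^{-c_1 n/2}\); for \(N\) large enough that \(\lambda\nu_\beta/N\le c_1/4\), the series \(\sum_{n>CN}\mathrm e^{\lambda\nu_\beta n/N-c_1 n/2}\) is dominated by a geometric series of ratio \(\mathrm e^{-c_1/4}\), so \(\mathcal N_N\le C_2(\beta)\,\mathrm e^{-c_1 CN/4}\). Dividing by the bound on \(\mathcal D_N\),
\[
	\mu_{\BOXN;\beta,\lambda/N}^{\pm}(\abs\gamma>CN)\le\bigl(1+\smo1\bigr)\,C_2\,\mathrm e^{-(c_1 C/4-c_*)N},
\]
which tends to \(0\) as soon as \(C>4c_*/c_1\). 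Fixing \(C=C(\beta,\lambda)>\max\{2C_1,4c_*/c_1\}\) then yields~\eqref{eq:C-short}; moreover, since \(\abs{\psi_N(\gamma)}\le\nu_\beta\abs\gamma\le\nu_\beta CN\) on \(\{\abs\gamma\le CN\}\), the companion bound \(\abs{\psi_N(\gamma)}\le C'N\) stated just before the proposition follows with \(C'\defby\nu_\beta C\).
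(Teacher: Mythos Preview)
Your proof is correct and follows essentially the same route as the paper: both use the tilted-area representation~\eqref{eq:Comp-BN-bound} together with the bound~\eqref{eq:psi-N-bound}, and both reduce to an exponential tail estimate on \(\abs{\gamma}\) under the \emph{field-free} measure coming from the skeleton/Ornstein--Zernike calculus (the paper records this as~\eqref{eq:Skel-UB}). Your writeup is more explicit about the ratio \(\mathcal N_N/\mathcal D_N\) and about the crude lower bound on \(\mathcal D_N\) via \(\abs{\BOX_N^-[\gamma]},\abs{\gamma}=\bgo{N^2}\), whereas the paper leaves this implicit in the phrase ``once \(C\) is taken large enough''; the substance is the same.
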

\begin{proof}
Recall the Definition~\ref{def:restricted_contour_phase} of the restricted phase \(\mu_{\BOXN;\beta,\lambda/N}^{\pm, \kappa}\) of \(\kappa\log N\)-contours and the statement of Proposition~\ref{prop:no_large_contours}.
Clearly,
\[
	\mu_{\BOXN;\beta,\lambda/N}^\pm (\abs{\gamma} > C N)
	\leq
	\mu_{\BOXN;\beta,\lambda/N}^\pm (C_\kappa^\comp)
	+
	\mu_{\BOXN;\beta,\lambda/N}^{\pm , \kappa}  (\abs{\gamma} > C  N).
\]
We already know that the first term in the right-hand side vanishes as \(N\to\infty\), so we only discuss the second one.
Using the usual skeleton calculus for Ising phase separation lines (which is based on a certain BK-type property, see~\cite{Pfister-Velenik_1999, Campanino-Ioffe-Velenik_2003}),
we infer the existence of \(\alpha = \alpha(\beta) > 0\) and \(C_0 = C_0(\beta) < \infty\) such that
\begin{equation}\label{eq:Skel-UB}
	\mu_{\BOXN;\beta,0}^{\pm, \kappa} (\abs{\gamma} \geq CN) \leq \mathrm{e}^{-\alpha C N}
\end{equation}
simultaneously for all \(C\geq C_0\) as soon as \(N\) is sufficiently large. Our target~\eqref{eq:C-short} thus follows from~\eqref{eq:Comp-BN-bound} and the  estimate~\eqref{eq:psi-N-bound}, once \(C\) is taken large enough.
\end{proof}
\begin{remark}\label{rem:psiN}
	By~\eqref{eq:psi-N-bound} and in view of Proposition~\ref{prop:C-short}, we can restrict
	attention to the case in which \(\frac{\abs{\psi_N (\gamma)}}{N} \leq C\nu_\beta\).
	Accordingly, as long as we discuss events whose \(\mu_{\BOXN;\beta,\lambda/N}^{\pm}\)-probability
	asymptotically vanishes, we may ignore the term \(\frac{{\psi_N(\gamma)}}{N}\) on the right-hand side
	of~\eqref{eq:Comp-BN-bound}.
\end{remark}

\section{Rough entropic repulsion bound}
\label{sec:rough-ER}

Define the rectangular boxes and, for further reference, the vertical semi-strips
\begin{gather}\label{eq:boxes-strips}
	\calB_{M,R} \defby \setof{i=(i_1,i_2) \in {\BOX_N}}{\abs{i_1}\leq M; {0 \leq }i_2\leq R}\\
	\label{eq:strip}
	\calS_{L,R} \defby \setof{i=(i_1,i_2) \in \bbZ^2}{ L \leq {i_1}\leq R ; i_2\geq 0}\\
	\intertext{\rm and}
	\calS_{L} \defby \setof{i=(i_1,i_2) \in \bbZ^2}{ |i_1|\leq L}.
\end{gather}
For horizontal shifts of rectangular boxes, we shall use \(\calB_{M,R}(\ell) \defby (\ell,0) + \calB_{M,R}\).
Furthermore, for \(\sfu,\sfv\in\bbH_+\), we use \(\calS_{\sfu,\sfv}\) for the semi-strip in~\eqref{eq:strip} through the horizontal projections of \(\sfu\) and \(\sfv\).

Let us say that a contour \(\gamma\) does not hit a subset \(A \subset \bbZ^2\) (which we denote \(\gamma \cap A =\emptyset\))  if \(\gamma \cap \hat{A} = \emptyset\) (recall~\eqref{eq:Ahat} for the definition of \(\hat{A}\)).
Our goal in this section is to prove the following
\begin{proposition}\label{prop:ER}
	For all \(\epsilon>0\) small enough,
	\[
		\mu_{\BOXN;\beta,\lambda/N}^{\pm} (\gamma\cap\calB_{{3} N^{1-5\epsilon},N^\epsilon}\neq\emptyset)
		\leq
		N^{-\frac\epsilon2}.
	\]
\end{proposition}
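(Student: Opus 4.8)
The plan is to reduce, by a union bound over $\asymp N^{1/3-4\epsilon}$ horizontal windows of width $\asymp N^{2/3-\epsilon}$, to an entropic repulsion statement inside one such window. Inside a window this short the magnetic field $\lambda/N$ contributes only a bounded multiplicative factor (its action on a region of area $\leq N$ is $\mathrm{O}(1)$ in the exponent), so it can be switched off and the zero-field entropic repulsion estimate of~\cite{Ioffe+Ott+Velenik+Wachtel-20} applied.

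I would first perform the usual reductions: by Proposition~\ref{prop:no_large_contours} and Remark~\ref{rem:kappa} we may work with $\mu_{\BOXN;\beta,\lambda/N}^{\pm,\kappa}$; by Proposition~\ref{prop:C-short} and Remark~\ref{rem:psiN} we may restrict to $\abs{\gamma}\leq CN$, so that the term $\lambda\psi_N(\gamma)/N$ in~\eqref{eq:Comp-BN-bound} stays bounded; and by Lemma~\ref{lem:RoughLocalizationInterface} we may assume $\gamma$ lies below $\delta N$. Next, cover $\{\abs{i_1}\leq 3N^{1-5\epsilon}\}$ by $\asymp N^{1/3-4\epsilon}$ overlapping boxes $\calB'_j\subset\BOX_N$ of horizontal extent $\asymp N^{2/3-\epsilon}$ and height $N^{1/3+\epsilon}$, arranged so that every horizontal coordinate in the target range lies deep inside (i.e. far from the vertical sides of) several of them. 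On the event $\gamma\cap\calB_{3N^{1-5\epsilon},N^\epsilon}\neq\emptyset$ the interface comes within distance $N^\epsilon$ of the wall at a point deep inside some $\calB'_j$; writing $E_j$ for this event, it then suffices to prove $\mu_{\BOXN;\beta,\lambda/N}^{\pm,\kappa}(E_j)\leq N^{-1/3+2\epsilon}$ uniformly in $j$, since $N^{1/3-4\epsilon}\cdot N^{-1/3+2\epsilon}=N^{-2\epsilon}\leq N^{-\epsilon/2}$.

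To estimate $\mu_{\BOXN;\beta,\lambda/N}^{\pm,\kappa}(E_j)$ I would first discard, using~\ref{property:BK}, \ref{property:MonotVolSum} and the sharp triangle inequality~\eqref{eq:SharpTriangleInequality} exactly as in the proof of Lemma~\ref{lem:RoughLocalizationInterface}, the event that $\gamma$ backtracks horizontally by more than, say, one third of the width of $\calB'_j$ inside the vertical strip through $\calB'_j$; this has probability at most $\mathrm{e}^{-cN^{2/3-\epsilon}}$. On its complement the portion of $\gamma$ inside $\calB'_j$ is a single ``traversing'' sub-path (plus possible short vertical spikes issued from the top edge of $\calB'_j$, whose contribution is super-exponentially small). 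Conditioning via~\eqref{eq:Comp-BN-bound} on the restriction of $\gamma$ to $\BOX_N\setminus\calB'_j$ and on the endpoints of this sub-path on $\partial\calB'_j$: the part of $\BOX_N^-[\gamma]$ lying inside $\calB'_j$ has at most $\abs{\calB'_j}\asymp N^{2/3-\epsilon}\cdot N^{1/3+\epsilon}=N$ sites, so the corresponding area-tilt factor lies in $[\mathrm{e}^{-c(\beta,\lambda)},1]$, while the rest of the area-tilt and the weight $q_{N,\beta}$ of the part of $\gamma$ outside $\calB'_j$ are fixed by the conditioning. Trading, via the mixing property~\ref{property:Mixing}, the conditioned weight $q_{N,\beta}(\,\cdot\mid\text{outside})$ for its translation-invariant infinite-volume (half-plane) counterpart --- legitimate since the relevant part of $\gamma$ is at distance $\asymp N^{2/3-\epsilon}$ from $\partial\calB'_j$ --- one is reduced, uniformly over the conditioning, to a purely zero-field event: a zero-field interface traversing a window of width $W\asymp N^{2/3-\epsilon}$, with prescribed endpoints, that comes within distance $N^\epsilon$ of the wall somewhere deep inside the window. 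By the entropic repulsion estimate of~\cite{Ioffe+Ott+Velenik+Wachtel-20}, such an interface pinned at its two ends at height $\geq N^\epsilon$ dips below $N^\epsilon$ in the central part of the window with probability at most $C\,N^\epsilon/\sqrt W=C\,N^{-1/3+3\epsilon/2}\leq N^{-1/3+2\epsilon}$.

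\textbf{The main obstacle} is precisely the proviso that the traversing sub-path enters $\calB'_j$ at height $\geq N^\epsilon$: if instead it meets a vertical side of $\calB'_j$ below $N^\epsilon$, then $\gamma$ is itself within $\calB_{3N^{1-5\epsilon},N^\epsilon}$ at a horizontal coordinate that (by the substantial overlap of the covering) lies deep inside a different window $\calB'_{j'}$, so this case feeds back into the same estimate for $\calB'_{j'}$. Since $N^\epsilon$ lies far below the natural fluctuation scale $\sqrt W=N^{1/3-\epsilon/2}$ and the range $\{\abs{i_1}\leq 3N^{1-5\epsilon}\}$ has bounded width, this loop is closed by a short bootstrap (each step relates $E_j$ to the same type of event at a horizontal coordinate a macroscopic distance $\asymp W$ away), or, equivalently, by invoking the estimate of~\cite{Ioffe+Ott+Velenik+Wachtel-20} in a form uniform over the conditioned boundary data; the feedback terms are then absorbed into the union bound. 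Summing the resulting per-window bound over the $\asymp N^{1/3-4\epsilon}$ windows yields the proposition, with room to spare.
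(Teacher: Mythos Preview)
Your overall architecture --- union bound over $\asymp N^{1/3-4\epsilon}$ windows of width $\asymp N^{2/3-\epsilon}$, then reduce each window to a zero-field statement and import the entropic repulsion estimate from~\cite{Ioffe+Ott+Velenik+Wachtel-20} --- matches the paper. The difference lies entirely in how the reduction to a single window is carried out, and here the paper's route is both simpler and avoids the obstacle you had to work around.

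The paper does not condition on the portion of $\gamma$ outside the window. Instead it observes that $\{\gamma\cap\Delta_1\neq\emptyset\}$, with $\Delta_1=\calB_{N^{2/3-\epsilon},N^\epsilon}$, is an \emph{increasing} event in the spin variables (it is equivalent to the existence of an s-path of $+$~spins from the bottom-left corner to~$\Delta_1$). FKG then allows one to set all spins outside a slightly larger box $\Delta_2=\calB_{2N^{2/3-\epsilon},N^{1/3+\epsilon}}$ to $+1$, yielding
\[
\mu_{\BOXN;\beta,\lambda/N}^{\pm}(\gamma\cap\Delta_1\neq\emptyset)\;\leq\;\mu_{\Delta_2;\beta,\lambda/N}^{\pm}(\gamma\cap\Delta_1\neq\emptyset).
\]
Since $\abs{\Delta_2}=\bgo{N}$, the field is removed at bounded multiplicative cost, and (after a second FKG step to pass to a half-square box $\Delta_3$) one is exactly in the setting of~\cite[Lemma~3.6]{Ioffe+Ott+Velenik+Wachtel-20}: a $\pm$-boundary interface in a box of width $\asymp N^{2/3-\epsilon}$, for which the probability of touching a central strip of height $N^\epsilon$ is $\leqs N^{-1/3+7\epsilon/2}$. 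No conditioning, no mixing argument, no boundary-data issue.

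Your route --- condition on $\gamma$ outside $\calB'_j$, use~\ref{property:Mixing} to pass to infinite-volume weights, apply~\cite{Ioffe+Ott+Velenik+Wachtel-20} for an interface with prescribed endpoints --- runs into the obstacle you identify, and the bootstrap you sketch does not close as written: if the entry into $\calB'_j$ is below $N^\epsilon$, you defer to $E_{j'}$, but $E_{j'}$ faces the same dichotomy, so the recursion yields only $p\leq N^{-1/3+2\epsilon}+p$. One could try to exit the loop by tracking, say, the leftmost window in which the dip occurs and arguing that \emph{there} the left entry must be high (since outside the target range one has no control anyway), but this does not obviously work at the boundary of the range $\abs{i_1}\leq 3N^{1-5\epsilon}$. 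Your alternative --- invoking~\cite{Ioffe+Ott+Velenik+Wachtel-20} in a form uniform over all conditioned boundary heights --- is the honest fix, but that is a stronger input than the paper needs, and it is not the statement of~\cite[Lemma~3.6]{Ioffe+Ott+Velenik+Wachtel-20}. The FKG monotonicity trick sidesteps all of this: it reduces directly to the $\pm$-boundary box, where the interface is pinned to the wall at both ends and the cited lemma applies verbatim.
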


\begin{proof}
To shorten notation, let \(\Delta_1 \defby \calB_{N^{2/3 -{\epsilon}}, N^{\epsilon}}\) and \(\Delta_2 \defby \calB_{2N^{2/3-\epsilon}, N^{1/3+\epsilon}}\).

Observe first that the event \(\{\gamma\cap\Delta_1\neq\emptyset\}\) is equivalent to the existence of an s-path of \(+\) spins connecting \((-N-1, -1)\) to \(\Delta_1\); in particular, this is an increasing event.
It then follows from the FKG inequality (by fixing to \(+1\) all spins in \(\BOX_N\setminus\Delta_2\)) that
\[
	\mu_{\BOXN;\beta,\lambda/N}^{\pm} (\gamma\cap\Delta_1\neq\emptyset)
	\leq
	\mu_{\Delta_2;\beta,\lambda/N}^{\pm} (\gamma\cap\Delta_1 \neq\emptyset).
\]
We can now get rid of the magnetic field: since \(\abs{\Delta_2} {<5}N\),
\[
	\mu_{\Delta_2;\beta,\lambda/N}^{\pm} (\gamma\cap\Delta_1\neq\emptyset)
	\leq
	\mathrm{e}^{
		{10} \lambda} \, \mu_{\Delta_2;\beta,0}^{\pm} (\gamma\cap\Delta_1\neq\emptyset).
\]
Our next step is to replace the box \(\Delta_2\) by the larger box \(\Delta_3= \calB_{2N^{2/3-\epsilon},2N^{2/3-\epsilon}}\) (we want to import results from~\cite{Ioffe+Ott+Velenik+Wachtel-20} which are stated for half square boxes).
By the BK-type Property~\ref{property:BK} of the path weights and the sharp triangle inequality~\eqref{eq:SharpTriangleInequality}, there exists \(c=c(\beta)>0\) such that
\begin{equation}\label{eq:StripM}
	\mu_{\Delta_3;\beta,0}^{\pm} (\gamma\not\subset\calB_{2N^{2/3-\epsilon},\frac12 N^{1/3+\epsilon}})
	\leq
	\mathrm{e}^{-cN^{3\epsilon}},
\end{equation}
for all \(N\) large enough.
Thus, writing \(A\) for the event that there is an s-path of \(+\) spins crossing \(\{(i_1,i_2):\ |i_1|\leq 2N^{2/3-\epsilon}, \frac12 N^{1/3+\epsilon} \leq  i_2\leq  N^{1/3+\epsilon} \}\) from left to right, we have \(\mu_{\Delta_3;\beta,0}^{\pm}(A) = 1+\smo{1}\) (as a direct consequence of the spatial Markov property and exponential decay of \(-\) connections under \(+\) boundary conditions).

In particular, as a consequence of monotonicity (FKG),
\begin{equation}\label{eq:boxtostrip}
	\mu_{\Delta_2;\beta,0}^{\pm} (\gamma\cap\Delta_1\neq\emptyset)
	=
	\bigl(1+\smo{1}\bigr)\, \mu_{\Delta_3;\beta,0}^{\pm} (\gamma\cap\Delta_1\neq\emptyset).
\end{equation}
Indeed, on the one hand, one has \(\mu_{\Delta_2;\beta,0}^{\pm} (\gamma\cap\Delta_1\neq\emptyset)
\geq \mu_{\Delta_3;\beta,0}^{\pm} (\gamma\cap\Delta_1\neq\emptyset)\) as \(\{\gamma\cap\Delta_1\neq\emptyset\}\) is increasing.
On the other hand, denoting \(X\) the upper-most path realizing \(A\),
\begin{align*}
	\mu_{\Delta_3;\beta,0}^{\pm} (\gamma\cap\Delta_1\neq\emptyset) &\geq \mu_{\Delta_3;\beta,0}^{\pm} (\gamma\cap\Delta_1\neq\emptyset, A)\\
	&= \sum_{\eta} \mu_{\Delta_3;\beta,0}^{\pm} (X=\eta)\mu_{\eta^-;\beta,0}^{\pm} (\gamma\cap\Delta_1\neq\emptyset)\\
	&\geq \mu_{\Delta_2;\beta,0}^{\pm} (\gamma\cap\Delta_1\neq\emptyset)\mu_{\Delta_3;\beta,0}^{\pm} (A),
\end{align*}
where the sum is over \(\eta\) realizing \(A\), \(\eta^-\) is the volume below \(\eta\), and we used FKG and the spatial Markov property.

At this stage, one can use an entropic repulsion bound derived in~\cite{Ioffe+Ott+Velenik+Wachtel-20}:
\begin{equation}\label{eq:ER-bound}
	\mu_{\Delta_3;\beta,0}^{\pm} (\gamma\cap\Delta_1\neq\emptyset)
	\leqs
	N^{-\frac13+\frac{7\epsilon}2}.
\end{equation}
This is a simple re-running of the proof of~\cite[Lemma 3.6]{Ioffe+Ott+Velenik+Wachtel-20} with our choice of boxes (the claim in~\cite{Ioffe+Ott+Velenik+Wachtel-20} is actually stronger, as it applies to the FK-percolation cluster that contains the interface \(\gamma\)).

Collecting all the above estimates, we conclude that there exists \(c=c(\beta,\lambda)\) such that
\[
	\mu_{\BOXN;\beta,\lambda/N}^{\pm} (\gamma\cap\Delta_1\neq\emptyset)
	\leq
	c N^{-\frac13+\frac{7\epsilon}2}.
\]
Observe that the exact same bound holds for any horizontal translate of \(\Delta_1\) such that the corresponding translate of \(\Delta_2\) is contained inside \(\BOX_N\).
Therefore, using a union bound, we can apply the above inequality to \(3N^{1/3 - 4\epsilon}\) successive translates of \(\Delta_1\) to obtain the desired claim.
\end{proof}

\section{Renewal structure and Random Walk representation of interfaces}
\label{sec:OZTheory}

In this Section we introduce and develop a factorization of the interface weights \(q_{N,\beta}\) and \(q_\beta\), as defined in~\eqref{eq:q-weights}.

\subsection{Irreducible decomposition of the interface}

Let us recall the notions and basic facts related to the irreducible decomposition of interfaces~\cite{Campanino-Ioffe-Velenik_2003,Campanino-Ioffe-Velenik_2008,Ott-Velenik_2017,Ioffe+Ott+Velenik+Wachtel-20}.
We shall try to keep  the notation close to  the one employed in \cite{Ioffe+Ott+Velenik+Wachtel-20}.

First of all, one defines cones and the associated diamonds:
\begin{gather*}
	\fcone \defby \setof{i=(i_1,i_2)\in\bbZ^{2}}{i_1 \geq \abs{i_2}}, \quad
	\bcone \defby -\fcone,\\
	\calD(\sfu,\sfv) \defby (\sfu+\fcone)\cap (\sfv+\bcone).
\end{gather*}
Let \(\eta\) be a connected subgraph of \(\bbZ^{2,*}\), specifically a portion of the interface \(\gamma\) or, more generally, any path satisfying the deformation rules.
We will say that \(\sfu\in\eta\) is a \emph{cone-point} of \(\eta\) if
\begin{equation*}
	\eta\subset \sfu+ (\bcone\cup \fcone).
\end{equation*}
We denote \( \CPts(\eta)\) the set of cone-points of \(\eta\).

\begin{definition}\label{def:path-types}
We will say that \(\eta\) is:
\begin{itemize}
	\item \emph{Forward-confined} if there exists \(\sfu \in {\eta}\) such that \({\eta}\subset \sfu+\fcone\). When it exists, such an \(\sfu\) is unique; we denote it \(\fend({\eta})\).
	\item \emph{Backward-confined} if there exists \(\sfv\in {\eta}\) such that \({\eta}\subset \sfv+\bcone\). When it exists, such a \(\sfv\) is unique; we denote it \(\bend({\eta})\).
	\item \emph{Diamond-confined} if it is both forward- and backward-confined.
	\item \emph{Irreducible} if it is diamond-confined and
	it does not contain cone-points other than its end-points \(\bend (\eta)\) and
	\(\fend (\eta)\).
\end{itemize}
\end{definition}
In the sequel, we shall employ the following notation:
\begin{align*}
	\SetRootMarkForwCont(\sfu) &\defby \{\eta \text{ forward-confined with \(\fend(\eta) =\sfu\)}\}, \\
	\SetRootMarkBackCont(\sfv) &\defby \{\eta \text{ backward-confined with \(\bend(\eta) = \sfv\)}\}, \\
	\SetRootDiaCont (\sfu,\sfv) &\defby \SetRootMarkBackCont(\sfv)\cap \SetRootMarkForwCont(\sfu) =
	\{\eta \text{ diamond-confined with \(\fend(\eta) =\sfu,\, \bend(\eta) = \sfv\)}\}, \\
	\SetRootDiaCont^{\textnormal{irr}} (\sfu,\sfv) &\defby \{\eta\in\SetRootDiaCont (\sfu,\sfv)
	\text{ and is irreducible}\}.
\end{align*}
Note that the concatenation  \(\eta_1\circ\eta_2\) is well defined whenever \(\eta_1\in \SetRootMarkBackCont (\sfu )\) and  \(\eta_2\in \SetRootMarkForwCont (\sfu )\).
In the sequel,
\[
	\SetRootMarkForwCont \defby \bigcup_\sfu \SetRootMarkForwCont(\sfu) ,\,
	\SetRootMarkBackCont \defby \bigcup_\sfv \SetRootMarkBackCont(\sfv) ,\,
	\SetRootDiaCont \defby \bigcup_{\sfu,\sfv} \SetRootDiaCont(\sfu,\sfv)
	\text{ and }
	\SetRootDiaCont^{\textnormal{irr}} \defby \bigcup_{\sfu,\sfv} \SetRootDiaCont^{\textnormal{irr}}(\sfu,\sfv).
\]

\smallskip
In our context, the irreducible decomposition of the interface \(\gamma\) is quantified by the mass-gap upper bound (see~\eqref{eq:massgap-mu} below), which was established in~\cite{Campanino-Ioffe-Velenik_2003} in the case of the infinite-volume weights \(q_\beta\).
The proof there was based on upper bounds and surcharge inequalities, which were used to control the geometry of contours on large finite scales.
By the domain-monotonicity property~\ref{property:MonotVolSum}, these estimates extend to the finite-volume weights \(q_{N,\beta}\).
By routine adjustments, it then follows that there exist positive finite constants \(\epsilon=\epsilon_\beta\) and \(\phi=\phi_\beta\) such that the following lower bound on the
density of \(\CPts(\gamma)\) holds in our context for all \(N\) large enough:
\begin{equation}\label{eq:massgap-mu}
	\mu_{\BOXN;\beta,0}^{\pm} ( \abs{\CPts(\gamma)} \leq \epsilon_\beta N )
	 \leq
	 \mathrm{e}^{- 2\phi_\beta N} .
\end{equation}
Choosing \(\delta\) in Lemma~\ref{lem:RoughLocalizationInterface}	to be sufficiently small (for instance \(\delta < \frac{\phi_\beta}{\lambda}\)), we may assume, in view of \eqref{eq:Comp-BN-bound}, that the following version of the \emph{irreducible} decomposition of~\cite{Campanino-Ioffe-Velenik_2003} holds,
up to exponentially small \(\mu_{\BOXN;\beta,\lambda/N}^{\pm}\)-probabilities, for the interface \(\gamma\):
\begin{equation}\label{eq:IRD1}
	\gamma = \gamma_L \circ \gamma_1 \circ \dots \circ \gamma_m \circ \gamma_R ,
\end{equation}
where \(\gamma_L\in\SetRootMarkBackCont\), \(\gamma_R\in\SetRootMarkForwCont\) and \(\gamma_1,\dots,\gamma_m\in\SetRootDiaCont^{\textnormal{irr}}\), whereas \(m\geq \epsilon_\beta N\).
The expression \emph{irreducible decomposition} implies that~\eqref{eq:IRD1} cannot be further refined, that is, the boundary pieces \(\gamma_L\) and \(\gamma_R\) do not contain any non-trivial cone-points.

The following  refinement of~\eqref{eq:massgap-mu} was established in~\cite{Ioffe+Ott+Velenik+Wachtel-20} (see Subsection~\ref{sub:ltPotts} for more detail):
\begin{proposition}\label{prop:ref-Potts-mg}
	There exists \(\phi=\phi_\beta>0\) such that, for any \(\rho\in (0,1)\) and \(d\in\bbN\) fixed,
	\begin{equation}\label{eq:massgap-mep}
		\mu_{\BOXN;\beta,0}^{\pm} ( \CPts(\gamma) \cap \calS_{\ell,\ell+d N^{\rho}} = \emptyset )
		\leq
		\mathrm{e}^{- 2\phi_\beta d N^{\rho }} ,
	\end{equation}
	for each \(\ell\in \{-N,\dots,N-dN^{\rho }\}\). (The semi-strip \(\calS_{\ell,\ell+d N^{\rho }}\) is defined in~\eqref{eq:strip}.)
\end{proposition}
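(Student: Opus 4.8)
The plan is to establish this by the localised version of the argument that yields~\eqref{eq:massgap-mu}. The key geometric observation is that between two consecutive cone-points $\gamma$ is diamond-confined, so the horizontal coordinates of the points of $\CPts(\gamma)$ are non-decreasing along $\gamma$. Consequently, on the event $\{\CPts(\gamma)\cap\calS_{\ell,\ell+dN^\rho}=\emptyset\}$ there is a pair of \emph{consecutive} cone-points $\sfu,\sfv$ of $\gamma$ with $(\sfu)_1<\ell$ and $(\sfv)_1>\ell+dN^\rho$ such that the portion $\eta$ of $\gamma$ between them belongs to $\SetRootDiaCont^{\textnormal{irr}}(\sfu,\sfv)$ and satisfies $\normII{\sfv-\sfu}\ge(\sfv)_1-(\sfu)_1\ge dN^\rho$; when $\gamma$ has no cone-point on one side of the strip the relevant role is played by the corner $\lc{N}$ or $\rc{N}$, and when $\ell$ is within $\mathrm{O}(N^\rho)$ of $\pm N$ the straddling piece is one of the boundary pieces $\gamma_L,\gamma_R$, whose horizontal extent has exponential tails by the same analysis. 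I henceforth assume the strip lies well inside $\BOX_N$, and I would then sum over the straddling pair $\sfu,\sfv$.

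Concretely, passing to the line-weights via~\eqref{eq:q-mu-form}, performing a union bound over the (polynomially many relevant) positions of $\sfu,\sfv$, applying the $\mathrm{BK}$-type inequality~\ref{property:BK} at $\sfu$ and at $\sfv$ to factorise, and bounding the two end portions by~\eqref{eq:Prop4}, one obtains
\[
\mu_{\BOXN;\beta,0}^{\pm}\bigl(\CPts(\gamma)\cap\calS_{\ell,\ell+dN^\rho}=\emptyset\bigr)
\le
\frac{\PF_{\BOXN;\beta,0}^{+}}{\PF_{\BOXN;\beta,0}^{\pm}}
\sum_{\sfu,\sfv}\mathrm{e}^{-\taub(\sfu-\lc{N})}\,Q_N^{\textnormal{irr}}(\sfu,\sfv)\,\mathrm{e}^{-\taub(\rc{N}-\sfv)},
\]
where $Q_N^{\textnormal{irr}}(\sfu,\sfv)\defby\sum_{\eta\in\SetRootDiaCont^{\textnormal{irr}}(\sfu,\sfv)}q_{N,\beta}(\eta)$ and the sum runs over $\sfu$ with $(\sfu)_1\le\ell,(\sfu)_2\ge 0$ and $\sfv$ with $(\sfv)_1\ge\ell+dN^\rho,(\sfv)_2\ge 0$. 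The crucial input is the mass-gap bound of~\cite{Campanino-Ioffe-Velenik_2003}, which (by domain-monotonicity~\ref{property:MonotVolSum}) applies to $q_{N,\beta}$ with $\beta$-dependent constants: $Q_N^{\textnormal{irr}}(\sfu,\sfv)\le C_\beta\,\mathrm{e}^{-\taub(\sfv-\sfu)-\nu_\beta\normII{\sfv-\sfu}}\le C_\beta\,\mathrm{e}^{-\taub(\sfv-\sfu)}\mathrm{e}^{-\nu_\beta dN^\rho}$. Inserting this, using the iterated sharp triangle inequality~\eqref{eq:SharpTriangleInequality} to bound $\taub(\sfu-\lc{N})+\taub(\sfv-\sfu)+\taub(\rc{N}-\sfv)$ below by $\taub(\rc{N}-\lc{N})$ plus a defect that grows quadratically in the heights of $\sfu,\sfv$, and summing over $\sfu,\sfv$ (which costs only a factor $N^{\mathrm{O}(1)}$), one arrives at
\[
\mu_{\BOXN;\beta,0}^{\pm}\bigl(\CPts(\gamma)\cap\calS_{\ell,\ell+dN^\rho}=\emptyset\bigr)
\le
C_\beta\,N^{\mathrm{O}(1)}\,\frac{\PF_{\BOXN;\beta,0}^{+}}{\PF_{\BOXN;\beta,0}^{\pm}}\,\mathrm{e}^{-\taub(\rc{N}-\lc{N})}\,\mathrm{e}^{-\nu_\beta dN^\rho}.
\]
Since $\taub(\rc{N}-\lc{N})=(2N+1)\taub$ while $\PF_{\BOXN;\beta,0}^{+}/\PF_{\BOXN;\beta,0}^{\pm}\le\underline{c}_\beta^{-1}N^{3/2}\mathrm{e}^{2\taub N}$ by~\eqref{eq:ratio-q-mu}, the prefactor $\tfrac{\PF^{+}}{\PF^{\pm}}\mathrm{e}^{-\taub(\rc{N}-\lc{N})}$ is only polynomial in $N$; as $N^\rho\gg\log N$, absorbing the polynomial factors into $\mathrm{e}^{-\nu_\beta dN^\rho}$ gives the claim with, e.g., $\phi_\beta:=\nu_\beta/4$ (one shrinks it further if one insists on reusing the constant of~\eqref{eq:massgap-mu}).

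I expect the only genuinely substantive point to be the uniform-in-$N$ mass-gap bound on $Q_N^{\textnormal{irr}}$ used above: this is precisely where the Ornstein--Zernike analysis of~\cite{Campanino-Ioffe-Velenik_2003} (surcharge inequalities, skeleton calculus) enters in full, and checking that its proof carries over from $q_\beta$ to the finite-volume weights $q_{N,\beta}$ via~\ref{property:MonotVolSum} is the ``routine adjustment'' alluded to around~\eqref{eq:massgap-mu}. The only other point requiring care is the treatment of the corner pieces $\gamma_L,\gamma_R$ when the strip abuts a corner of $\BOX_N$; one handles it exactly as in~\cite{Ioffe+Ott+Velenik+Wachtel-20}, from which this proposition is essentially taken.
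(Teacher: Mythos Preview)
Your proposal is correct and matches the paper's approach: the paper does not give a self-contained proof of this proposition but simply states (in Subsection~\ref{sub:ltPotts}) that it ``is an immediate consequence of the computations employed in the proof of Lemma~2.2 in~\cite{Ioffe+Ott+Velenik+Wachtel-20}''. Your sketch --- union bound over the straddling cone-points $\sfu,\sfv$, BK-type factorisation via~\ref{property:BK}, the Ornstein--Zernike mass-gap bound on irreducible pieces transferred from $q_\beta$ to $q_{N,\beta}$ by domain monotonicity, and the polynomial control on the partition-function ratio from~\eqref{eq:ratio-q-mu} --- is precisely the content of those computations, so you have essentially unpacked the citation.
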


Below, we will need twice a lower bound on the probability that a portion of the interface remains inside a narrow tube. We provide here a result that is sufficient for our purposes (and essentially follows the corresponding argument for random walks in~\cite{Hryniv-Velenik_2004}).
\begin{lemma}\label{lem:tube}
	The following is true for all integers \(C\) and \(M\) large enough.
	Let \(\sfu=(u_1,u_2),\sfv=(v_1,v_2)\) be such that \(v_1-u_1 > C M^2 > C^2\abs{v_2-u_2}\). Let \(\sfe\) denote a unit vector normal to the line through \(\sfu\) and \(\sfv\) and
	\[
		\calT \defby \calD(\sfu,\sfv) \cap \setof{x\in\bbR^2}{\abs{(x-\sfu)\cdot \sfe} \leq M}.
	\]
	Then, there exists \(c=c(\beta)\) such that, for all \(M\) large enough,
	\[
	\sum_{\gamma'\in\calC_{\sfu,\sfv}} \IF{\gamma' \subset \hat\calT} q_{\beta}(\gamma') \geq \frac{1}{c} \mathrm{e}^{- c \normII{\sfv-\sfu}/M^2 -\taub(\sfv-\sfu)}.
	\]
\end{lemma}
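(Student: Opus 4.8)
The plan is to cut the segment $[\sfu,\sfv]$ into $n$ consecutive blocks of horizontal length of order $M^2$, prove a ``constant order'' lower bound on the $q_\beta$-weight of the diamond-confined pieces of interface that cross one block while staying inside a tube of width $\asymp M$, and then glue these estimates together with the concatenation inequality~\ref{property:Concat}. By translation invariance of $q_\beta$ we may take $\sfu=0$. Set $n:=\lfloor (v_1-u_1)/M^2\rfloor$ (so $n\ge C-1$ and each block has horizontal length of order $M^2$) and pick dual-lattice points $\sfu=\sfw_0,\sfw_1,\dots,\sfw_n=\sfv$ within distance $O(1)$ of the equally spaced points $\tfrac{k}{n}(\sfv-\sfu)$ on the chord. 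The hypothesis $v_1-u_1>C^2\abs{v_2-u_2}$ forces each increment $\Delta_k:=\sfw_{k+1}-\sfw_k$ to lie deep inside the cone $\fcone$; since $\fcone+\fcone=\fcone$ and $\sfw_k-\sfu,\ \sfv-\sfw_{k+1}\in\fcone$, this gives $\calD(\sfw_k,\sfw_{k+1})\subset\calD(\sfu,\sfv)$. Let $\calT_k$ be the portion of $\calD(\sfw_k,\sfw_{k+1})$ within distance $\tfrac12 M$ of $[\sfu,\sfv]$, so that $\hat\calT_k\subset\hat\calT$ once $M$ is large. Any $\gamma_k\in\SetRootDiaCont(\sfw_k,\sfw_{k+1})$ with $\gamma_k\subset\hat\calT_k$ meet only at the cone-points $\sfw_k$ and live in disjoint diamonds, hence are pairwise compatible; their concatenation belongs to $\calC_{\sfu,\sfv}$, lies in $\hat\calT$, and distinct tuples produce distinct such paths. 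Thus~\ref{property:Concat} yields
\[
\sum_{\gamma'\in\calC_{\sfu,\sfv}}\IF{\gamma'\subset\hat\calT}q_\beta(\gamma')
\;\ge\;
\prod_{k=0}^{n-1}\Bigl(\,\sum_{\substack{\gamma_k\in\SetRootDiaCont(\sfw_k,\sfw_{k+1})\\ \gamma_k\subset\hat\calT_k}}q_\beta(\gamma_k)\Bigr).
\]

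The hard part will be the per-block estimate: there is $c_1=c_1(\beta)>0$ such that, for all $M$ large and all $k$,
\[
\sum_{\substack{\gamma_k\in\SetRootDiaCont(\sfw_k,\sfw_{k+1})\\ \gamma_k\subset\hat\calT_k}}q_\beta(\gamma_k)\;\ge\;c_1\,\mathrm{e}^{-\taub(\Delta_k)}.
\]
The heuristic is that a diamond-confined interface joining two points at horizontal distance of order $M^2$ has vertical fluctuations of order $M$, so confinement to a tube of width of order $M$ costs only a bounded factor. To turn this into a proof I would invoke the Ornstein--Zernike renewal structure for $q_\beta$ together with the associated invariance principle (\cite{Campanino-Ioffe-Velenik_2008,Ott-Velenik_2017}, adapted to the Ising interface as in~\cite{Ioffe+Ott+Velenik+Wachtel-20}): under the diffusive rescaling by $\Delta_{k,1}^{-1}$ horizontally and $\Delta_{k,1}^{-1/2}$ vertically, the $q_\beta$-normalized law on $\SetRootDiaCont(\sfw_k,\sfw_{k+1})$ converges, \emph{uniformly} over the admissible near-horizontal increments, to a nondegenerate Brownian bridge; in those coordinates $\calT_k$ contains a fixed neighbourhood of the chord, so the limiting bridge stays inside it with probability $\ge c_0>0$, whence for $M$ large the tube-confined weight is at least $c_0$ times the unconstrained diamond-confined weight $\sum_{\gamma_k\in\SetRootDiaCont(\sfw_k,\sfw_{k+1})}q_\beta(\gamma_k)$, which is itself $\ge c_2\,\mathrm{e}^{-\taub(\Delta_k)}$ by the lower bound supplied by the same theory. (Equivalently, one runs the random-walk-in-a-tube argument of~\cite{Hryniv-Velenik_2004} directly on the cone-point renewal sequence, the irreducible pieces being microscopic and thus automatically inside the tube.) The two genuinely delicate points are the uniformity of the invariance principle over all admissible $\sfu,\sfv$ and the control of the cone-point at each block endpoint; both should follow from the surcharge/skeleton bounds of~\cite{Campanino-Ioffe-Velenik_2003} and the domain monotonicity~\ref{property:MonotVolSum} already used above.

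It then remains to combine. Writing $\Delta_k=\tfrac1n(\sfv-\sfu)+\delta_k$ with $\delta_k=O(1)$ and $\sum_k\delta_k=0$, the triangle inequality for the norm $\taub$ gives $\sum_{k}\taub(\Delta_k)\le n\,\taub\bigl(\tfrac1n(\sfv-\sfu)\bigr)+\sum_k\taub(\delta_k)\le \taub(\sfv-\sfu)+C_3 n$ for some $C_3=C_3(\beta)$. Feeding the per-block estimate into the product bound,
\[
\sum_{\gamma'\in\calC_{\sfu,\sfv}}\IF{\gamma'\subset\hat\calT}q_\beta(\gamma')
\;\ge\;
c_1^{\,n}\,\mathrm{e}^{-\sum_k\taub(\Delta_k)}
\;\ge\;
\mathrm{e}^{-\taub(\sfv-\sfu)}\,\mathrm{e}^{-\bigl(C_3+\log(1/c_1)\bigr)n}.
\]
Since $n\le (v_1-u_1)/M^2\le \normII{\sfv-\sfu}/M^2$, the exponent of the last factor is at most $c'\normII{\sfv-\sfu}/M^2$ with $c'=c'(\beta)$, and the claim follows with any $c=c(\beta)\ge\max(c',1)$. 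As a sanity check, property~\ref{property:MonotVolSum} provides the matching upper bound $\sum_{\gamma'\in\calC_{\sfu,\sfv}}q_\beta(\gamma')\le\mathrm{e}^{-\taub(\sfv-\sfu)}$, so the exponential rate is sharp.
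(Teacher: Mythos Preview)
Your overall architecture matches the paper's: block decomposition of $[\sfu,\sfv]$ into pieces of horizontal length $\asymp M^2$, concatenation via~\ref{property:Concat}, a per-block lower bound coming from the invariance principle, and a final control of $\sum_k\taub(\cdot)$. There is, however, a genuine gap in your per-block estimate.

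You claim that the unconstrained diamond-confined weight satisfies
\(\sum_{\gamma_k\in\SetRootDiaCont(\sfw_k,\sfw_{k+1})}q_\beta(\gamma_k)\ge c_2\,\mathrm{e}^{-\taub(\Delta_k)}\).
This is too strong: Ornstein--Zernike asymptotics give a prefactor \(\asymp \normII{\Delta_k}^{-1/2}\asymp M^{-1}\), so the correct bound is
\(\ge c_2\,M^{-1}\mathrm{e}^{-\taub(\Delta_k)}\) (and this matches, not contradicts, the \emph{upper} bound \(\le \mathrm{e}^{-\taub(\Delta_k)}\) from~\ref{property:MonotVolSum}). With that correction your product becomes
\((c_1 M^{-1})^{n}\,\mathrm{e}^{-\taub(\sfv-\sfu)-C_3 n}\), i.e.\ an extra factor \(M^{-n}=\exp\{-n\log M\}\). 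Since \(n\asymp\normII{\sfv-\sfu}/M^2\), the exponent you obtain is of order \((\log M)\,\normII{\sfv-\sfu}/M^2\), which is weaker than the claimed \(\normII{\sfv-\sfu}/M^2\).

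The paper's remedy is precisely to recover that missing \(M^n\): instead of fixing a single intermediate point \(\sfw_k\) on the chord, one sums over all \(t_k\) in a window \(\Pi_k\) of \(\asymp M\) points at distance \(\le M/2\) from the chord. Each sum over \(t_k\) contributes a factor \(\asymp M\), which exactly cancels the \(M^{-1}\) from the local limit behaviour, leaving \(\tilde c^{\,n}\). Note that once the \(t_k\) are allowed to wander off the chord by \(O(M)\), your clean triangle-inequality bound \(\sum_k\taub(\Delta_k)\le\taub(\sfv-\sfu)+C_3 n\) no longer applies (the perturbations \(\delta_k\) are now \(O(M)\), not \(O(1)\)); this is why the paper resorts to a second-order expansion of \(\taub\) in the angle and uses the geometric cancellation \(\sum_k\sin(\theta_k)\normII{t_{k+1}-t_k}=0\) to kill the first-order term.
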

\begin{proof}
	For \(k\) an integer, let
	\[
	\Pi_k \defby \setof{i=(i_1,i_2)\in\calT}{i_1 = u_1 + k M^2,\abs{(i-\sfu)\cdot \sfe} \leq M/2 }.
	\]
	Let \(K \defby \lfloor (v_1-u_1)/M^2 \rfloor-1\). Then,
	\[
	\sum_{\gamma'\in\calC_{\sfu,\sfv}} \IF{\gamma' \subset \hat\calT} q_\beta(\gamma')
	\geq
	\sum_{t_1,\dots,t_K} \sum_{\gamma'_0,\dots,\gamma'_{K}} q_\beta(\gamma'_0\circ\dots\circ\gamma'_{K}) ,
	\]
	where the first sum is over vertices \(t_1,\dots,t_K\) such that \(t_k\in\Pi_k\) for all \(1\leq k\leq K\) and the second sum is over paths \(\gamma'_0,\dots,\gamma'_{K}\) such that each path \(\gamma'_k\) belongs to \(\SetRootDiaCont(t_{k},t_{k+1})\) and stays inside \(\hat\calT\) (with the convention that \(t_{0} = \sfu\) and \(t_{K+1} = \sfv\)); see Fig.~\ref{fig:tube}.
	
	By property~\ref{property:Concat}, \(q_\beta(\gamma'_0\circ\dots\circ\gamma'_{K+1}) \geq \prod_{k=0}^K q_\beta(\gamma'_k)\).
	By the invariance principle established in~\cite{greenberg2005invariance}, each of the sums over \(\gamma_k\), \(0\leq k\leq K\), is bounded below by \(\tilde cM^{-1}\mathrm{e}^{-\taub(t_{k+1}-t_k)}\) for some constant \(\tilde c=\tilde c(\beta)>0\).
	Finally, each sum over \(t_k\) contributes a factor at least \(M\).
	Therefore, provided that \(M\) was chosen large enough,
	\[
	\sum_{\gamma'\in\calC_{\sfu,\sfv}} \IF{\gamma' \subset \hat\calT_M} q_\beta(\gamma')
	\geq
	\Bigl(\frac{\tilde c}{M}\Bigr)^{K+1} M^{K+1} \mathrm{e}^{-\sum_{k=0}^K \taub(t_{k+1}-t_k)}
	\geq
	\mathrm{e}^{-c K} \mathrm{e}^{- \taub(\sfv-\sfu)} ,
	\]
	where we used \(\sum_{k=0}^K \taub(t_{k+1}-t_k) = \taub(\sfv-\sfu) + \bgo{K}\). Indeed, denoting by \(\theta_k\) the angle between the vectors \(t_{k+1}-t_k\) and \(\sfv-\sfu\) and parametrizing the surface tension by the angle: \(\taub((t_{k+1}-t_k)/\normII{t_{k+1}-t_k}) \equiv \taub(\theta_k)\), we have
	\[
		\taub(t_{k+1}-t_k)
		= \taub(\theta_k) \normII{t_{k+1}-t_k}
		= \bigl[\taub(0) + \taub'(0)\theta_k + \bgo{\theta_k^2}\bigr] \normII{t_{k+1}-t_k}.
	\]
	The claim follows by observing that \(\sum_k \normII{t_{k+1}-t_k} = \normII{\sfv-\sfu}+\bgo{K}\), \(\theta_k = \bgo{1/M}\) and \(\sum_k \theta_k \normII{t_{k+1}-t_k} = \sum_k \sin(\theta_k) \normII{t_{k+1}-t_k} + \smo{K} = \smo{K}\), since the last sum vanishes, which is geometrically evident.
\end{proof}

\begin{figure}[th]
	\centering
	\resizebox{\textwidth}{!}{\input{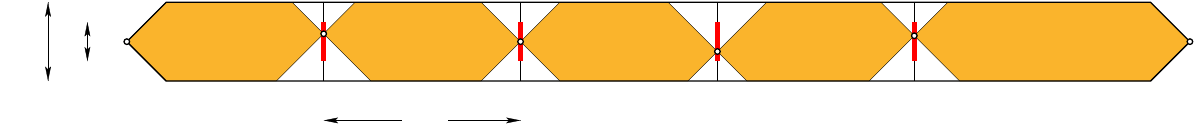_t}}
	\caption{
		Sketch (not-to-scale) of the construction in the proof of Lemma~\ref{lem:tube}. We restrict the sum to paths staying inside the shaded area. The probabilistic cost of this confinement is constant in each of the \(K+1\) intervals, thanks to the invariance principle.}
	\label{fig:tube}
\end{figure}%

\subsection{The set \(\calC_N^{\rm reg}\) and rough estimates under \(\mu_{\BOXN;\beta,\lambda /N}^{\pm}\)}
\label{sub:Creg}

In this subsection, we develop rough estimates under the measure \(\mu_{\BOXN;\beta,\lambda/N}^{\pm}\) that enable a reduction to a certain subset \(\calC_N^{\rm reg}\subset\calC_N\) of interfaces which we call \emph{regular} interfaces.
The definition of \(\calC_N^{\rm reg}\) is adjusted to our needs and depends on three scales \(\kappa\), \(\iota\) and \(\epsilon\) satisfying
\begin{equation}\label{eq:kappa-scales-g}
	1 >\kappa  > \iota  > \frac23 > \frac13 > \epsilon > 0.
\end{equation}
It should be kept in mind that \((N^{\frac23},N^{\frac13})\) is the natural (and diffusive) scale when considering the statistical properties of the interface \(\gamma\) under \(\mu_{{\BOXN};\beta,\lambda/N}^\pm\).
The relatively rough estimates we derive here are adjusted to this scale.

Following~\eqref{eq:kappa-scales-g}, set
\begin{equation}\label{eq:dlrh-N-g}
	d_N \defby d_+ \sqrt[3]{N},\
	\ell_N \defby -N^{\kappa} + N^{\iota},\
	r_N \defby N^{\kappa} - N^{\iota}
	\text{ and }
	h_N \defby c_+ \sqrt[3]{N}.
\end{equation}
\begin{definition}\label{def:Creg}
	Fix \(C\) sufficiently large.
	We say that a realization of the interface \(\gamma\) belongs to the set
	\[
		\calC_N^{\sf reg}
		=
		\calC_N^{\sf reg} (c_+,d_+,\kappa,\iota,\epsilon) \subset \calC_N
	\]
	if the following four conditions~\eqref{eq:C0}--\,\eqref{eq:C3} are satisfied by its irreducible decomposition~\eqref{eq:IRD1}:
	\begin{equation}
		\label{eq:C0}\tag{\(\sfC^{\sf reg}_{0,N}\)}
		\abs{\CPts(\gamma)} \geq \frac1C N\quad \text{ and }\quad \abs{\gamma}\leq C N .
	\end{equation}
	Furthermore (see Fig.~\ref{fig:regularInterfaces}),
	\begin{equation}
		\label{eq:C1}\tag{\(\sfC_{1,N}^{\sf reg }\)}
		\gamma\text{ stays above the rectangular box \(\calB_{N^\kappa,N^\epsilon}\), that is, }
		\gamma\cap \calB_{N^\kappa,N^\epsilon} = \emptyset .
	\end{equation}
	Next, there is a cone-point of \(\gamma\) in every semi-strip \(\calS_{\ell,\ell+d_N}\) of width \(d_N\):
	\begin{equation}
		\label{eq:C2}\tag{\(\sfC^{\sf reg}_{2 , N}\)}
		\CPts(\gamma)\cap \calS_{\ell,\ell+d_N} \neq \emptyset
		\text{ for any }
		\ell\in \{-N,\dots,N-d_N\} .
	\end{equation}
	Finally, \(\gamma\) puts at least one cone-point in each of the two shifted rectangular boxes \(\calB_{N^{\iota},2h_N} (\ell_N)\) and \(\calB_{N^{\iota},2 h_N} (r_N)\) (that is, the left-most and
	right-most sub-boxes of width \(2N^{\iota}\) and height \(2h_N\) sitting on the bottom of the strip \(\calS_{N^\kappa} = \calS_{-N^{\kappa},N^\kappa}\)):
	\begin{equation}
		\label{eq:C3}\tag{\(\sfC^{\sf reg}_{3,N}\)}
		\CPts(\gamma) \cap \calB_{N^{\iota},2h_N} (\ell_N) \neq \emptyset
		\text{ and } \CPts(\gamma) \cap \calB_{N^{\iota},2h_N} (r_N) \neq \emptyset .
	\end{equation}
\end{definition}
\begin{figure}
	\centering
	\input{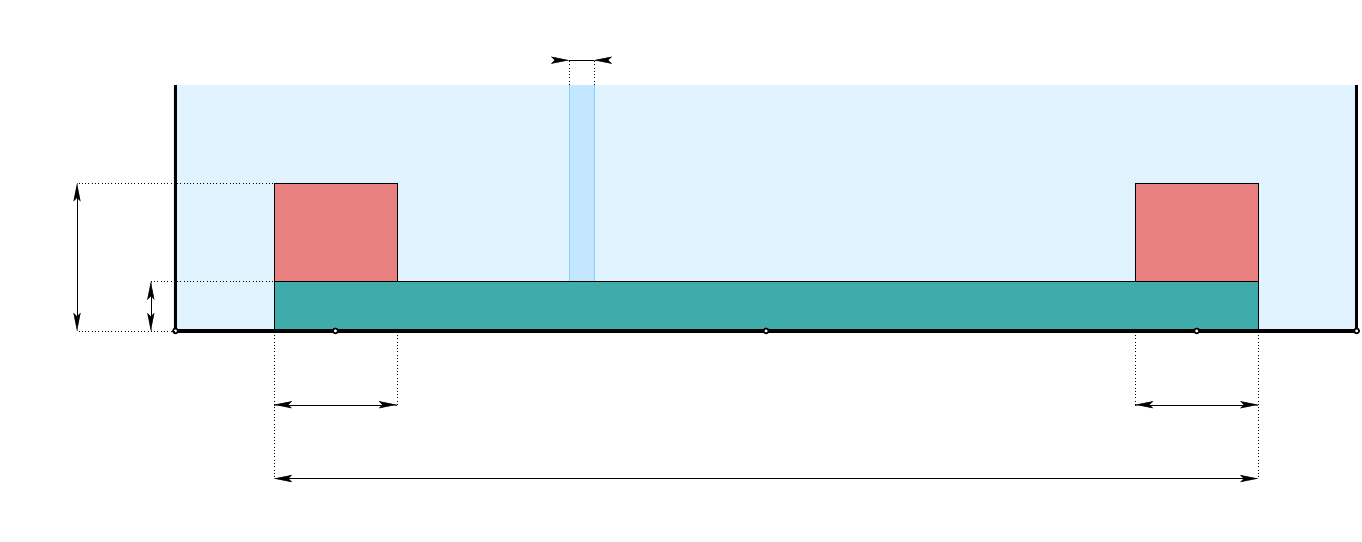_t}
	\caption{Sketch (not-to-scale) of the construction in the definition of regular interfaces: the latter must have at least one cone-point in each of the semi-infinite vertical strip \(\calS_{\ell,\ell+d_N}\) of width \(d_N\) (a piece of one of them is drawn in light blue) and in each of the boxes \(\calB_{N^{\iota},2h_N} (\ell_N)\) and \(\calB_{N^{\iota},2 h_N} (r_N)\) (in light red) and must avoid the box \(\calB_{N^\kappa,N^\epsilon}\) (in teal).}
	\label{fig:regularInterfaces}
\end{figure}%

\medskip
Here is our main reduction statement:
\begin{theorem}
	\label{thm:ERW-g}
	For any \(\lambda>0\), there exists a choice of \(c_+,d_+\) and of scales \(\kappa,\iota,\epsilon\)
	satisfying~\eqref{eq:kappa-scales-g} such that
	\begin{equation}\label{eq:ERW}
		\lim_{N\to\infty}
		\mu_{\BOXN;\beta,\lambda /N}^{\pm} \bigl( \gamma\not\in\calC_N^{\sf reg} (c_+,d_+,\kappa,\iota,\epsilon) \bigr)
		= 0 .
	\end{equation}
\end{theorem}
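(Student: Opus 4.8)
The plan is to verify the four defining conditions of Definition~\ref{def:Creg} one at a time, each with $\mu_{\BOXN;\beta,\lambda/N}^{\pm}$-probability tending to $1$. First fix the exponents of~\eqref{eq:kappa-scales-g}, choosing $1>\kappa>\iota>\tfrac23$ and $\tfrac13>\epsilon>0$ with the additional constraint $\kappa+5\epsilon<1$ (and $\epsilon$ below the threshold of Proposition~\ref{prop:ER}); the constants $C$, $d_+$, $c_+$ are then fixed in this order. The length half of $(\sfC^{\sf reg}_{0,N})$, namely $\abs\gamma\le CN$, is Proposition~\ref{prop:C-short} (take $C$ at least the constant it provides), and it lets us restrict throughout to $\{\abs\gamma\le CN\}$, on which $\abs{\psi_N(\gamma)}\le\nu_\beta CN$ by~\eqref{eq:psi-N-bound}. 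Condition $(\sfC^{\sf reg}_{1,N})$, i.e.\ $\gamma\cap\calB_{N^\kappa,N^\epsilon}=\emptyset$, follows directly from Proposition~\ref{prop:ER} on the tilted measure: since $\kappa+5\epsilon<1$ we have $\calB_{N^\kappa,N^\epsilon}\subset\calB_{3N^{1-5\epsilon},N^\epsilon}$ for $N$ large, whence $\mu_{\BOXN;\beta,\lambda/N}^{\pm}(\gamma\cap\calB_{N^\kappa,N^\epsilon}\ne\emptyset)\le N^{-\epsilon/2}\to0$.

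Set $w(\gamma)\defby\exp\bigl[-\tfrac{2\lambda m^*_\beta}{N}\abs{\BOX_N^-[\gamma]}+\tfrac\lambda N\psi_N(\gamma)\bigr]$, so that Proposition~\ref{prop:Comp-BN-bound} reads $\mu_{\BOXN;\beta,\lambda/N}^{\pm}(E)=(1+\smo1)\,\bigl(\sum_{\gamma\in E}w(\gamma)\mu_{\BOXN;\beta,0}^{\pm,\kappa}(\gamma)\bigr)\big/\bigl(\sum_\gamma w(\gamma)\mu_{\BOXN;\beta,0}^{\pm,\kappa}(\gamma)\bigr)$ uniformly in events $E$. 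For the denominator, restrict to the interfaces confined to a tube of height $\sim N^{1/3}$ running along the bottom of $\BOX_N$: Lemma~\ref{lem:tube}, the domain-monotonicity $q_{\BOX_N,\beta}\ge q_\beta$ (Property~\ref{property:MonotVolWeight}), and~\eqref{eq:ratio-q-mu} bound the $\mu_{\BOXN;\beta,0}^{\pm}$-probability of this set of interfaces below by $c\,e^{-c'N^{1/3}}$, and on it $\abs{\BOX_N^-[\gamma]}\lesssim N^{4/3}$ so $w\ge e^{-cN^{1/3}}$; together with~\eqref{eq:Skel-UB} and Proposition~\ref{prop:no_large_contours} this gives $\sum_\gamma w(\gamma)\mu_{\BOXN;\beta,0}^{\pm,\kappa}(\gamma)\ge c\,e^{-C_0N^{1/3}}$ with $C_0=C_0(\beta,\lambda)$ fixed. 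Since $w\le e^{\lambda\nu_\beta C}$ on $\{\abs\gamma\le CN\}$, we obtain the transfer bound $\mu_{\BOXN;\beta,\lambda/N}^{\pm}(E)\le C'e^{C_0N^{1/3}}\mu_{\BOXN;\beta,0}^{\pm,\kappa}(E)$ for every $E\subset\{\abs\gamma\le CN\}$. The density half of $(\sfC^{\sf reg}_{0,N})$ is now immediate from~\eqref{eq:massgap-mu} (provided $C\ge1/\epsilon_\beta$), as $\mu_{\BOXN;\beta,0}^{\pm,\kappa}(\abs{\CPts(\gamma)}<N/C)\le e^{-2\phi_\beta N}$ with $N\gg N^{1/3}$; and $(\sfC^{\sf reg}_{2,N})$ follows from Proposition~\ref{prop:ref-Potts-mg} (with $\rho=\tfrac13$, $d=d_+$) and a union bound over the $\le2N$ relevant strips, giving $\mu_{\BOXN;\beta,0}^{\pm,\kappa}(\neg(\sfC^{\sf reg}_{2,N}))\le 4N\,e^{-2\phi_\beta d_+N^{1/3}}$, which the transfer bound kills as soon as $d_+>C_0/(2\phi_\beta)$ — this is why $d_+$ is chosen after $C_0$.

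Condition $(\sfC^{\sf reg}_{3,N})$ is the delicate one: it cannot be imported from $\mu_{\BOXN;\beta,0}^{\pm}$, under which $\gamma$ floats at height $\sim\sqrt N$ and $(\sfC^{\sf reg}_{3,N})$ typically fails. Choose $c_+$ larger than $d_+$ and than a constant depending on $(\beta,\lambda)$. By Proposition~\ref{prop:ref-Potts-mg} (transferred; the windows $[\ell_N\mp N^\iota,\ell_N\pm N^\iota]$ have width $\gg N^{1/3}$) there is, with high probability, a cone-point with horizontal coordinate in each such window, and $(\sfC^{\sf reg}_{2,N})$ holds; on this event, since consecutive cone-points are then within $d_N$ horizontally and a diamond-confined piece of $\gamma$ cannot drop by more than its horizontal span, the absence of a cone-point in $\calB_{N^\iota,2h_N}(\ell_N)$ forces $\gamma^+(i)>2h_N-d_N>c_+N^{1/3}$ for all $i$ in the window. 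It thus suffices to prove $\mu_{\BOXN;\beta,\lambda/N}^{\pm}(H_L)\to0$, where $H_L\defby\{\gamma^+(i)>c_+N^{1/3}\ \forall\, i\in[\ell_N-N^\iota,\ell_N+N^\iota]\}$ (and likewise $H_R$ at $r_N$). Working with $\mu_{\BOXN;\beta,\lambda/N}^{\pm,\kappa}(\gamma)\propto(1+\smo1)\,w(\gamma)q_{N,\beta}(\gamma)$ (from Proposition~\ref{prop:Comp-BN-bound} and~\eqref{eq:q-mu-form}), we condition on the part of $\gamma$ outside the window — in particular on the two cone-points $\sfu,\sfv$ bracketing it — and bound the conditional probability of $H_L$ by a surgery: replace the bracketed piece $\eta$ by one confined to a tube of height $\sim N^{1/3}$ joining $\sfu$ to $\sfv$. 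By Lemma~\ref{lem:tube} and Property~\ref{property:MonotVolWeight} the conditional weight of the tube-confined replacements is $\ge c^{-1}e^{-cN^{\iota-2/3}-\taub(\sfv-\sfu)}$, while Property~\ref{property:MonotVolSum} bounds the weight of the pieces realizing $H_L$ by $e^{-\taub(\sfv-\sfu)}$; on the other hand the replacement saves an area of order $c_+N^{1/3+\iota}$ over the window, hence a factor $e^{+\,c''c_+N^{\iota-2/3}}$ in $w$. The descent ramps of the replacement — from the edge heights of $\eta$ down to height $\sim N^{1/3}$ — contribute only an area $\lesssim N\log N\ll N^{1/3+\iota}$, since the a priori bound $\gamma^+\le C_1(\beta)\sqrt{N\log N}$ (a consequence of FKG — the event that $\gamma$ reaches a given height is non-increasing — and the proof of Lemma~\ref{lem:RoughLocalizationInterface} run with $\delta=\delta_N\to0$) controls $\sfu_2,\sfv_2$. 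Collecting the estimates, $\mu_{\BOXN;\beta,\lambda/N}^{\pm}(H_L)$ is bounded, up to a polynomial prefactor, by $e^{-(c''c_+-c''')N^{\iota-2/3}}\to0$ once $c_+$ is large enough, because $\iota>\tfrac23$; this fixes $c_+$.

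The step I expect to be the real obstacle is precisely this bound on $\mu_{\BOXN;\beta,\lambda/N}^{\pm}(H_L)$. It is a second-order estimate: the leading contributions $e^{-\taub(\sfv-\sfu)}$ and the area of the flat part of $\gamma$ over the window cancel between the $H_L$-restricted and the tube-confined partition functions, so the whole gain lives at the sub-leading order $N^{\iota-2/3}$, and crude bounds do not suffice — one must pair the lower bound of Lemma~\ref{lem:tube} with a matching upper bound on the restricted partition function, keep track of the $\psi_N$-corrections, and control the geometry of the replacement (hence the need for the a priori height bound). An alternative route, closer to our analysis of the effective model, is to extract from $(\sfC^{\sf reg}_{0,N})$–$(\sfC^{\sf reg}_{2,N})$ a partial renewal decomposition of $\gamma$ over the window and then invoke the standard ``the area-tilted directed walk does not stay high over a long interval'' estimate, in the spirit of~\cite{Ioffe+Shlosman+Velenik-15}. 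Either way, $(\sfC^{\sf reg}_{3,N})$ is where the work concentrates, the other three conditions reducing to bookkeeping on top of results already established.
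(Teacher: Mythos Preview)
Your treatment of \(\sfC^{\sf reg}_{0,N}\), \(\sfC^{\sf reg}_{1,N}\) and \(\sfC^{\sf reg}_{2,N}\) is essentially the paper's: the ``transfer bound'' you assemble is exactly the content of Lemma~\ref{lem:UB-Area-g} (the tube lower bound~\eqref{eq:g-lb} plus the area upper bound~\eqref{eq:UB-Area-g}), and the paper then combines it with Proposition~\ref{prop:ref-Potts-mg} just as you do. No disagreement there.

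The gap is in \(\sfC^{\sf reg}_{3,N}\). Your decomposition pins the bracketing cone-points \(\sfu,\sfv\) just outside the window and then has to cope with the fact that, under your own a~priori bound, \(\sfu_2,\sfv_2\) may be as large as \(\sqrt{N\log N}\). Bringing the replacement path from that height down to the low tube costs surface tension: the excess \(\taub(\sfw-\sfu)+\taub(\sfz-\sfw)+\taub(\sfv-\sfz)-\taub(\sfv-\sfu)\) is of order \(h^2/L_1\) where \(h\sim\sqrt{N\log N}\) is the drop and \(L_1\) the ramp length. If you keep the ramps short (so that their area is \(\lesssim N\log N\), as you claim), then \(L_1\lesssim\sqrt{N\log N}\) and the surface-tension excess is \(\gtrsim\sqrt{N\log N}\); if you make them long you flood the area. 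Either way the cost dominates the area gain, which is only \(c_+N^{\iota-2/3}<N^{1/3}\). Optimising over \(L_1\) one finds a ramp cost of order \(N^{1/4}(\log N)^{3/4}\), still larger than \(N^{\iota-2/3}\) for every \(\iota<11/12\); so your stated choice ``any \(\iota>2/3\)'' is not enough, and your claimed lower bound \(c^{-1}e^{-cN^{\iota-2/3}-\taub(\sfv-\sfu)}\) for the replacement weight is not justified.

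The paper sidesteps this entirely by a different decomposition: it takes \(\sfu\) (resp.\ \(\sfv\)) to be the \emph{first cone-point to the left (resp.\ right) of the window that already sits below height \(2h_N\)}, and then sums over their horizontal positions \(\ell,r\) (see~\eqref{eq:uv-above-sum-g}--\eqref{eq:lr-bound-box-g}). Because \(\sfu_2,\sfv_2\le 2h_N\sim N^{1/3}\) by construction, the ramps in the replacement are short and their surface-tension excess is \(O(1)\); the extra summation over \(\ell,r\) is harmless since each term carries the extra damping \(e^{-c(\ell+r)d_N/N^{2/3}}\) coming from the enlarged high stretch. This is the missing idea in your argument.
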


\subsection{Proof of Theorem~\ref{thm:ERW-g}}

Property~\eqref{eq:C0} is already established (see Proposition~\ref{prop:C-short} and the paragraph
before~\eqref{eq:IRD1}).

The same is true regarding Property~\eqref{eq:C1}: this is the content of the entropic repulsion estimate in Proposition~\ref{prop:ER}.

Property~\eqref{eq:C2} is a simple consequence of the following upper and lower  bounds on the area \(\abs{\BOX_N^-[\gamma]}\), which we shall prove in Subsection~\ref{sub:UB-Area-g}:
\begin{lemma}\label{lem:UB-Area-g}
	There exists \(C = C_\beta \) such that
	\begin{equation}\label{eq:g-lb}
	\sum_{\gamma\in\calC_N} \IF{\nabs{\BOX_N^-[\gamma]} \leq C_\beta N^{4/3}} q_{N,\beta}(\gamma)
	\geq
	\mathrm{e}^{-\lambda m_\beta^* C_\beta N^{1/3} - 2\taub N} .
	\end{equation}
	In particular,
	\begin{equation}\label{eq:UB-Area-g}
	\mu_{\BOXN;\beta,\lambda/N}^{\pm} \bigl( \abs{\BOX_N^-[\gamma]} >
	C  N^{4/3} \bigr) \leq \mathrm{e}^{-\lambda m_\beta^* C N^{1/3}} .
	\end{equation}
	for any \(C > C_\beta\).
\end{lemma}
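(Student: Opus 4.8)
\emph{Overall plan.} The plan is to get \eqref{eq:UB-Area-g} as a Chernoff‑type corollary of \eqref{eq:g-lb}, and to concentrate the real work on \eqref{eq:g-lb}. Indeed, granting the tilted representation \eqref{eq:Comp-BN-bound}, the factor $\exp(-\tfrac{2\lambda m^*_\beta}{N}\abs{\BOX_N^-[\gamma]})$ penalises large area, so $\{\abs{\BOX_N^-[\gamma]}>CN^{4/3}\}$ costs $e^{-2\lambda m^*_\beta CN^{1/3}}$ relative to the whole ensemble, whereas \eqref{eq:g-lb} keeps the normalising sum $\ge e^{-\bgo{N^{1/3}}-2\taub N}$; balancing the two exponents gives the claim for $C$ large. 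Thus everything reduces to exhibiting a family of interfaces that stay within height $\bgo{N^{1/3}}$ of the bottom wall and carry total $q_{N,\beta}$‑weight equal to $e^{-2\taub N}$ up to a stretched‑exponential $e^{-\bgo{N^{1/3}}}$ loss.

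\emph{The lower bound \eqref{eq:g-lb}.} By domain monotonicity~\ref{property:MonotVolWeight}, $q_{N,\beta}(\gamma)\ge q_\beta(\gamma)$ for $\gamma\subset\BOXs_N$, so it suffices to bound a $q_\beta$‑sum from below. I would set $M\defby\lceil N^{1/3}\rceil$, fix the two points $\sfp_L\defby\lc{N}+(2M,M)$ and $\sfp_R\defby\rc{N}+(-2M,M)$ (both at height $M-\tfrac12$), and restrict to interfaces $\gamma=\gamma_L\circ\gamma_{\mathrm m}\circ\gamma_R$, where $\gamma_L$ is one fixed path of $\bgo{M}$ unit steps from $\lc{N}$ to $\sfp_L$ that stays above $\calL$ and below height $M$ and lies in $\sfp_L+\bcone$ (e.g. run along the wall, rise vertically, then run horizontally — one checks these three segments all lie in $\sfp_L+\bcone$), so $\gamma_L\in\SetRootMarkBackCont(\sfp_L)$; $\gamma_R$ is the mirror‑image path with $\gamma_R\in\SetRootMarkForwCont(\sfp_R)$; and $\gamma_{\mathrm m}$ ranges over all paths from $\sfp_L$ to $\sfp_R$ contained in the tube of Lemma~\ref{lem:tube} around the horizontal segment $[\sfp_L,\sfp_R]$ of half‑width $M$, which occupies heights $[-\tfrac12,2M-\tfrac12]$ and hence lies above $\calL$. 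Since this tube is intersected with $\calD(\sfp_L,\sfp_R)$ one has $\gamma_{\mathrm m}\in\SetRootDiaCont(\sfp_L,\sfp_R)$, so the three pieces concatenate legitimately; the resulting $\gamma$'s are distinct elements of $\calC_N$ staying below height $2M$, and \eqref{eq:LowerBndProductQ} together with \ref{property:MonotVolWeight} gives
\[
\sum_{\gamma\in\calC_N}\IF{\gamma\subset\hat\calT}\,q_{N,\beta}(\gamma)\ \ge\ q_\beta(\gamma_L)\,q_\beta(\gamma_R)\sum_{\gamma_{\mathrm m}}q_\beta(\gamma_{\mathrm m}),
\]
where $\hat\calT$ is the region swept out and the last sum runs over the tube‑confined paths from $\sfp_L$ to $\sfp_R$. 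Decomposing $\gamma_L,\gamma_R$ into $\bgo{M}$ bounded‑length elementary pieces and applying \eqref{eq:LowerBndProductQ} again (each piece has weight bounded below by translation invariance and positivity of $q_\beta$) yields $q_\beta(\gamma_L),q_\beta(\gamma_R)\ge e^{-\bgo{M}}=e^{-\bgo{N^{1/3}}}$, while Lemma~\ref{lem:tube} — applicable because $\sfp_R-\sfp_L=(2N+1-4M,0)$ with $2N+1-4M>CM^2$ for $N$ large — bounds $\sum_{\gamma_{\mathrm m}}q_\beta(\gamma_{\mathrm m})\ge c^{-1}\exp(-c(2N+1-4M)/M^2-\taub(\sfp_R-\sfp_L))$ with $\taub(\sfp_R-\sfp_L)=(2N+1-4M)\taub\le2\taub N$ (using $\taub(\sfe_1)=\taub(\sfe_2)=\taub$) and $(2N+1-4M)/M^2=\bgo{N^{1/3}}$. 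Hence the left‑hand side is $\ge e^{-\bgo{N^{1/3}}-2\taub N}$; since every such $\gamma$ satisfies $\abs{\BOX_N^-[\gamma]}\le(2M)(2N+1)+\bgo{M^2}=\bgo{N^{4/3}}$, choosing $C_\beta$ large enough that $C_\beta N^{4/3}$ beats this area bound and $\lambda m^*_\beta C_\beta$ beats the constant in the $\bgo{N^{1/3}}$ exponent proves \eqref{eq:g-lb}.

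\emph{Deducing \eqref{eq:UB-Area-g}.} Working, via Remark~\ref{rem:kappa}, with $\mu_{\BOXN;\beta,\lambda/N}^{\pm,\kappa}$ and inserting \eqref{eq:Comp-BN-bound}, one has up to $1+\smo{1}$
\[
\mu_{\BOXN;\beta,\lambda/N}^{\pm}\bigl(\abs{\BOX_N^-[\gamma]}>CN^{4/3}\bigr)\ \le\ \frac{\sum_{\abs{\BOX_N^-[\gamma]}>CN^{4/3}}e^{-\frac{2\lambda m^*_\beta}{N}\abs{\BOX_N^-[\gamma]}+\frac{\lambda\psi_N(\gamma)}{N}}\mu_{\BOXN;\beta,0}^{\pm,\kappa}(\gamma)}{\sum_{\gamma'}e^{-\frac{2\lambda m^*_\beta}{N}\abs{\BOX_N^-[\gamma']}+\frac{\lambda\psi_N(\gamma')}{N}}\mu_{\BOXN;\beta,0}^{\pm,\kappa}(\gamma')}.
\]
In the numerator $e^{-\frac{2\lambda m^*_\beta}{N}\abs{\BOX_N^-[\gamma]}}\le e^{-2\lambda m^*_\beta CN^{1/3}}$; by \eqref{eq:psi-N-bound} the factor $e^{\lambda\psi_N(\gamma)/N}$ is $\bgo{1}$ on $\{\abs{\gamma}\le C_0N\}$, and on $\{\abs{\gamma}>C_0N\}$ the skeleton bound \eqref{eq:Skel-UB} makes the contribution $\le\sum_{m>C_0N}e^{\lambda\nu_\beta m/N}e^{-\alpha m}=e^{-\bgo{N}}$, so the numerator is $\lesssim e^{-2\lambda m^*_\beta CN^{1/3}}$. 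For the denominator I would keep only the interfaces built in the proof of \eqref{eq:g-lb}: they satisfy $\abs{\BOX_N^-[\gamma]},\abs{\gamma}=\bgo{N^{4/3}}$, so their tilted weight is at least $e^{-\bgo{N^{1/3}}}\,\tfrac{\PF_{\BOXN;\beta,0}^{+}}{\PF_{\BOXN;\beta,0}^{\pm}}\sum_{\gamma\subset\hat\calT}q_{N,\beta}(\gamma)$ by \eqref{eq:q-mu-form} (the $C_\kappa$‑conditioning contributing only a $1+\smo{1}$ factor, since $\mu_{\BOXN;\beta,0}^{\pm}(C_\kappa^\comp\,|\,\gamma)=\smo1$ uniformly), and by the proof of \eqref{eq:g-lb} and \eqref{eq:ratio-q-mu} (so $\PF_{\BOXN;\beta,0}^{+}/\PF_{\BOXN;\beta,0}^{\pm}\asymp N^{3/2}e^{2\taub N}$ cancels the $e^{-2\taub N}$) this is $\ge e^{-\bgo{N^{1/3}}}$. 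Dividing, and enlarging $C_\beta$ so that the various $\bgo{N^{1/3}}$ constants are absorbed into $\lambda m^*_\beta C$, yields \eqref{eq:UB-Area-g} for all $C>C_\beta$ and all $N$ large.

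\emph{Main obstacle.} The hard point, which the construction is built around, is that Lemma~\ref{lem:tube} is a \emph{bulk} estimate for $q_\beta$ (tubes away from any boundary), whereas the interface is pinned to $\calL$ at $\lc{N},\rc{N}$: a tube straddling the chord $[\lc{N},\rc{N}]$ is useless because the bulk of its $q_\beta$‑mass sits on paths oscillating across $\calL$, which carry zero $q_{N,\beta}$‑weight, while a tube lying strictly above $\calL$ cannot be anchored at the two on‑wall endpoints. One must therefore splice in two short end pieces that lift the interface to height $\approx N^{1/3}$, and the delicate features of these pieces are exactly (i) that they stay above $\calL$ — which forces $\sfp_L,\sfp_R$ strictly inside the cone yet low enough that an explicit above‑wall, cone‑confined path reaching them exists; (ii) that their extra surface‑tension cost telescopes against the shortened middle chord, leaving $(2N+1)\taub+\bgo{1}$ in the exponent; and (iii) that they enclose only $\bgo{N}=o(N^{4/3})$ extra area. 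Everything else — the $\psi_N$ bookkeeping via \eqref{eq:psi-N-bound} and Proposition~\ref{prop:C-short}, and the interchange between the $\kappa$‑restricted and full measures via Remarks~\ref{rem:kappa} and~\ref{rem:psiN} — is routine.
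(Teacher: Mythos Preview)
Your proposal is correct and follows essentially the same route as the paper: lift the endpoints to height $\sim N^{1/3}$ via short cone-confined pieces of cost $e^{-\bgo{N^{1/3}}}$, apply Lemma~\ref{lem:tube} to the middle horizontal piece to obtain $e^{-2\taub N-\bgo{N^{1/3}}}$, and glue using~\ref{property:Concat}; then deduce~\eqref{eq:UB-Area-g} from~\eqref{eq:Comp-BN-bound} and the $\psi_N$ bound. The paper's version differs only in cosmetic choices (it places $\sfu_N,\sfv_N$ at height $2N^{1/3}$ so the tube sits strictly above $\calL$, and it sums over all admissible boundary pieces rather than fixing one), and it leaves the deduction of~\eqref{eq:UB-Area-g} at the level of your first paragraph rather than spelling out the ratio argument.
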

Indeed, let us write \(\calE \defby \bigcup_\ell \bsetof{\gamma}{\CPts(\gamma)\cap\calS_{\ell,\ell+d\sqrt[3]{N}} = \emptyset}\), where the union is over \(\ell\in \{-N,\dots,N-d\sqrt[3]{N}\}\).
It then follows from Lemma~\ref{lem:UB-Area-g} that
\begin{align*}
	\mu_{\BOXN;\beta,\lambda /N}^{\pm} (\calE)
	&\leq
	\mu_{\BOXN;\beta,\lambda /N}^{\pm} (\abs{\BOX_N^-[\gamma]} > 4C_\beta N^{4/3}) +
	\mu_{\BOXN;\beta,\lambda /N}^{\pm} (\calE \given \abs{\BOX_N^-[\gamma]} \leq
	4C_\beta  N^{4/3}) \\
	&\leq
	\mathrm{e}^{-\lambda m_\beta^* C_\beta N^{1/3}} +
	\mu_{\BOXN;\beta,\lambda /N}^{\pm} (\calE \given \abs{\BOX_N^-[\gamma]} \leq 4C_\beta  N^{4/3})	.
\end{align*}
We can bound the second term in the right-hand side using~\eqref{eq:Comp-BN-bound}, which can be recorded as follows:
\begin{equation}\label{eq:Comp-BN-bound-q}
	\mu_{\BOXN;\beta,\lambda/N}^{\pm} (\gamma)\, \bigl(1+\smo{1}\bigr)
	\propto
	\exp\Bigl[ - 2 \frac{\lambda m^*_\beta }{N} \abs{\BOX_N^-[\gamma]} + \frac{\lambda \psi_N(\gamma)}{N} \Bigr] \; q_{N,\beta}(\gamma).
\end{equation}
We thus have
\begin{multline*}
	\mu_{\BOXN;\beta,\lambda/N}^{\pm} (\calE \given \abs{\BOX_N^-[\gamma]} \leq 4C_\beta  N^{4/3})\\
	\leq
	\frac{
		\sum_{\gamma\in\calC_N} \IF{\calE,\nabs{\BOX_N^-[\gamma]} \leq 4C_\beta  N^{4/3}}
		\exp\Bigl[ - 2 \frac{\lambda m^*_\beta }{N} \abs{\BOX_N^-[\gamma]} + \frac{\lambda \psi_N(\gamma)}{N} \Bigr] \; q_{N,\beta}(\gamma)
	}
	{
		\sum_{\gamma\in\calC_N} \IF{\nabs{\BOX_N^-[\gamma]} \leq 4C_\beta  N^{4/3}}
		\exp\Bigl[ - 2 \frac{\lambda m^*_\beta }{N} \abs{\BOX_N^-[\gamma]} + \frac{\lambda \psi_N(\gamma)}{N} \Bigr] \; q_{N,\beta}(\gamma)
	} .
\end{multline*}
Thanks to~\eqref{eq:psi-N-bound} and~\eqref{eq:C0}, the exponential term in the numerator can be bounded by a constant, while the corresponding term in the denominator can be bounded below by \(\exp(-c N^{1/3})\) for some constant \(c=c(\beta,\lambda)>0\).
Therefore,
\[
	\mu_{\BOXN;\beta,\lambda/N}^{\pm} (\calE \given \abs{\BOX_N^-[\gamma]} \leq 4C_\beta  N^{4/3})
	\leq
	\mathrm{e}^{c N^{1/3}} \mu_{\BOXN;\beta,0}^{\pm} (\calE \given \abs{\BOX_N^-[\gamma]} \leq 4C_\beta  N^{4/3}) .
\]
It thus follows from the refinement~\eqref{eq:massgap-mep} of~\eqref{eq:massgap-mu}, as
formulated in Proposition~\ref{prop:ref-Potts-mg}, and from	the bounds~\eqref{eq:Prop4} and~\eqref{eq:g-lb} that there exists \(\phi=\phi_\beta>0\) such that, for any \(d\in\bbN\) sufficiently large,
\[
	\mu_{\BOXN;\beta,\lambda /N}^{\pm} (\calE)
	\leq
	\mathrm{e}^{- \phi_\beta \sqrt[3]{N}} ,
\]
which is~\eqref{eq:C2}.

\subsubsection{Proof of Lemma~\ref{lem:UB-Area-g}}
\label{sub:UB-Area-g}

We start again with~\eqref{eq:Comp-BN-bound}:
\[
	\mu_{\BOXN;\beta,\lambda/N}^{\pm} (\gamma)\, \bigl(1+\smo{1}\bigr)
	\propto
	\exp\Bigl[ - 2 \frac{\lambda m^*_\beta }{N} \abs{\BOX_N^-[\gamma]} + \frac{\lambda \psi_N(\gamma)}{N} \Bigr] \; q_{N,\beta}(\gamma).
\]
By~\eqref{eq:C0}, we may restrict attention to \(\abs{\gamma} \leq CN\).
Hence the term \(\frac{\psi_N(\gamma)}{N} = \bgo{1}\).
Therefore, by virtue of~\eqref{eq:q-mu-form}, the upper bound~\eqref{eq:UB-Area-g} is indeed a consequence of the lower bound~\eqref{eq:g-lb}.

Now, the lower bound~\eqref{eq:g-lb} is a rather standard assertion of the Ornstein--Zernike theory: Consider
\[
	\sfu_N \defby (-N + \lfloor 4\sqrt[3]{N} \rfloor, \lfloor 2\sqrt[3]{N} \rfloor)
	\text{ and }
	\sfv_N \defby (N - \lfloor 4\sqrt[3]{N} \rfloor, \lfloor 2\sqrt[3]{N} \rfloor) .
\]
By construction, \(\lc{N} \in \sfu_N + \bcone\) and \(\rc{N} \in \sfv_N + \fcone\).
On the one hand, it is not difficult to check that there exists \(\kappa=\kappa_\beta\) such that
\begin{multline}\label{eq:cl-rl}
	\sum_{\gamma_L:\, \lc{N} \mapsto \sfu_N} q_{N,\beta} (\gamma_L)
	\IF{\gamma_L\subset \sfu_N + \bcone}
	\geq
	\mathrm{e}^{-\kappa_\beta \sqrt[3]{N}}\\
	\text{ and }
	\sum_{\gamma_R:\, \sfv_N \mapsto \rc{N}} q_{N,\beta} (\gamma_R)
	\IF{\gamma_R\subset \sfv_N + \fcone} \geq \mathrm{e}^{-\kappa_\beta \sqrt[3]{N} } .
\end{multline}
On the other hand, since \(q_{N,\beta} (\gamma') \geq q_\beta(\gamma')\) by property~\ref{property:MonotVolWeight}, it follows from Lemma~\ref{lem:tube} that there exists \(\kappa_\beta\) such that, for all \(N\) large enough,
\begin{equation}\label{eq:uN-vN-part}
	\sum_{\gamma'\in\SetRootDiaCont(\sfu_N,\sfv_N)} q_{N,\beta} (\gamma')
	\IF{\gamma'\subset\bbZ\times [\sqrt[3]{N}, 3\sqrt[3]{N}]}
	\geq
	\mathrm{e}^{-\kappa_\beta \sqrt[3]{N} -2\taub N} .
\end{equation}
By property~\ref{property:Concat},
\[
	q_{N,\beta} (\gamma_L\circ\gamma'\circ\gamma_R) \geq
	q_{N,\beta} (\gamma_L)
	q_{N,\beta} (\gamma')
	q_{N,\beta} (\gamma_R),
\]
uniformly in all triples \((\gamma_L, \gamma', \gamma_R)\) contributing to~\eqref{eq:cl-rl} and~\eqref{eq:uN-vN-part}.
Our claim~\eqref{eq:g-lb} follows.\qed
\begin{figure}
	\centering
	\input{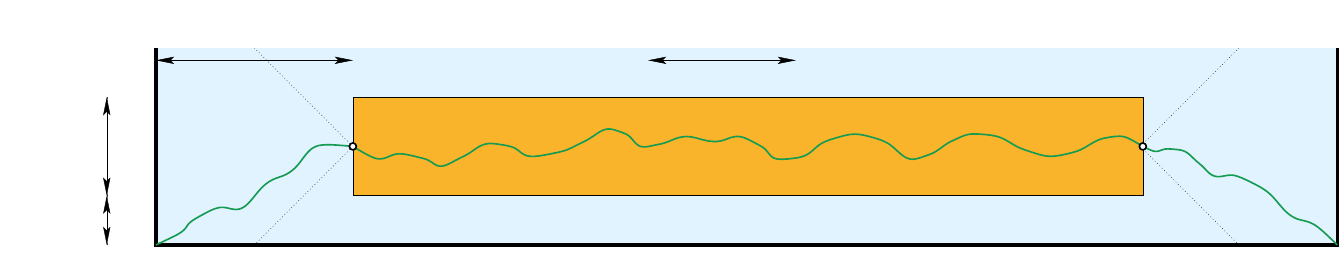_t}
	\caption{Sketch (not-to-scale) of the construction in the proof of Lemma~\ref{lem:UB-Area-g}.
		The lower bound is obtained by restricting to paths \(\gamma_L\circ\gamma'\circ\gamma_R\) with cone-points at the vertices \(\sfu_N\) and \(\sfv_N\), with \(\gamma'\) staying inside the yellow tube.}
	\label{fig:Lemma5_5}
\end{figure}%

\subsubsection{Property~\eqref{eq:C3}}

Recall~\eqref{eq:kappa-scales-g} and~\eqref{eq:dlrh-N-g}.
In view of the symmetry, it is sufficient to derive an upper bound on
\begin{equation}\label{eq:left-box-g}
	\mu_{\BOXN;\beta,\lambda/N}^{\pm} (\CPts(\gamma) \cap \calB_{N^{\iota},2h_N} (\ell_N) = \emptyset) .
\end{equation}
By the already established reduction to~\eqref{eq:C2}, we may assume that \(\CPts(\gamma)\) contains a vertex in any semi-strip \(\calS_{\ell,\ell+d_N}\) of width \(d_N\).

We proceed with a disjoint partition of the non-intersection event in question.
If \(\CPts(\gamma) \cap \calB_{N^{\iota},2h_N} (\ell_N) = \emptyset\), then let \(\sfu\) be the first vertex of \(\CPts(\gamma)\) left of the strip \(\calS_{-N^\kappa,-N^\kappa+2N^\iota}\) that sits below the height \(2h_N\).
Similarly, let \(\sfv\) be the first vertex of \(\CPts(\gamma)\) right of the strip \(\calS_{-N^\kappa,-N^\kappa+2N^\iota}\) that sits below the height \(2h_N\).
We use \(\calE_N(\sfu,\sfv)\) to denote the corresponding event (see Figure~\ref{fig:Creg3}) and we write \(\gamma = \gamma_L \circ \gamma^\prime  \circ \gamma_R\), where \(\gamma^\prime \defby \gamma \cap \calD(\sfu,\sfv) \in \SetRootDiaCont(\sfu,\sfv)\) is the portion of \(\gamma\) between \(\sfu\) and \(\sfv\).
\begin{figure}
	\centering
	\input{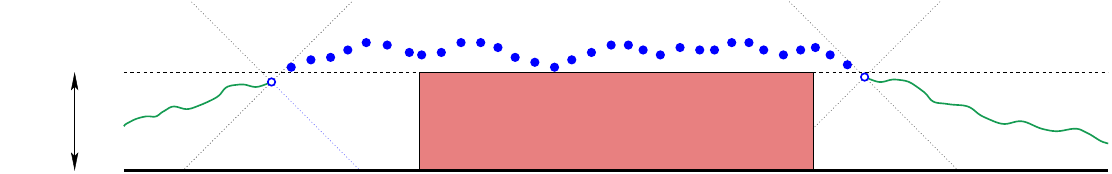_t}
	\caption{Sketch (not-to-scale) of the construction in the definition of the event \(\calE_N(\sfu,\sfv)\).
	All cone-points of the path located between \(\sfu\) and \(\sfv\) (blue dots) have to stay above the level \(2h_N\), while the cone-points \(\sfu\) and \(\sfv\) have to be below this level.}
	\label{fig:Creg3}
\end{figure}%
In this way,
\begin{equation}\label{eq:uv-above-sum-g}
	\mu_{\BOXN;\beta,\lambda/N}^{\pm} \bigl( \CPts(\gamma) \cap \calB_{N^{\iota},2h_N}(\ell_N) = \emptyset \bigr)
	=
	\sum_{\sfu,\sfv} \mu_{\BOXN;\beta,\lambda/N}^{\pm} \bigl( \calE_N(\sfu,\sfv) \bigr) .
\end{equation}
By construction, there exist \(\ell,r\in\bbN\) such that
\begin{align*}
	\sfu\in\calB_L(\ell) &\defby \calB_{d_N,2h_N}(-N^\kappa - (2\ell-1) d_N)\\
	\shortintertext{ and }
	\sfv\in\calB_R ( r ) &\defby \calB_{d_N,2h_N}(-N^\kappa + 2N^\iota + (2r-1) d_N) .
\end{align*}
We claim that there exists \(\kappa_\beta < \infty\), such that, for any \(\ell,r\in\bbN\),
\begin{multline}\label{eq:lr-bound-box-g}
	\maxtwo{\sfu\in\calB_L(\ell)}{\sfv\in\calB_R(r)} \mu_{\BOXN;\beta,\lambda/N}^{\pm} \bigl( \calE_N(\sfu,\sfv) \bigr) \\[-1em]
	\leq
	\exp\Bigl\{ - 2 \lambda m^*_\beta c_+ \frac{N^\iota+(\ell+r)d_N}{N^{2/3}} +
	\kappa_\beta \frac{N^\iota+(\ell+r)d_N}{ N^{2/3}} \Bigr\} .
\end{multline}
A substitution of~\eqref{eq:kappa-scales-g} and~\eqref{eq:dlrh-N-g}, with \(c_+\) chosen to be sufficiently large,  into~\eqref{eq:lr-bound-box-g} implies the result:
\[
	\mu_{\BOXN;\beta,\lambda/N}^{\pm}\bigl(
		\CPts (\gamma) \cap \calB_{N^{\iota},2h_N} (\ell_N) = \emptyset
	\bigr)
	=
	\smo{1} .
\]
It remains to explain \eqref{eq:lr-bound-box-g}.

Let us start by formulating an approximate domain-decoupling property of \(\psi_N\) in~\eqref{eq:psi-N}.
We shall give a general formulation and then apply it for \(\gamma = \gamma_L\circ \gamma^\prime \circ \gamma_R \in \calE_N(\sfu,\sfv)\).
Consider \(\eta  = \eta_L\circ \eta^\prime  \circ \eta_R\) with \(\eta_L\in\SetRootMarkBackCont\), \(\eta^\prime  \in \SetRootDiaCont\) and \(\eta_R\in\SetRootMarkForwCont\).
Set \(\sfw = \fend(\eta^\prime )\) and \(\sfz  = \bend (\eta^\prime)\).
Fix any path \(\eta_0^\prime \in\SetRootDiaCont(\sfw,\sfz)\) and define \(\eta_0 = \eta_L \circ \eta_0^\prime \circ \eta_R\).
Then,
\begin{align}\label{eq:psi-N-sum}
	\psi_N (\eta )
	= \bgo{\abs{\eta^\prime }}
	&+ \sum_{i\in\BOX_N^+[\eta]\setminus\calS_{\sfw,\sfz}}
	\Bigl\{
		\mu_{\BOXN;\beta,0}^{\pm} (\sigma_i \given \eta_0) -
		\mu_{\BOXN;\beta,0}^{+} (\sigma_i)
	\Bigr\}\notag\\
	&+ \sum_{i\in\BOX_N^-[\eta]\setminus\calS_{\sfw,\sfz}}
	\Bigl\{
		\mu_{\BOXN;\beta,0}^{\pm} (\sigma_i \given \eta_0 ) -
		\mu_{\BOXN;\beta,0}^{-} (\sigma_i)
	\Bigr\} .
\end{align}
\eqref{eq:psi-N-sum} is just an expression of the exponential spatial-relaxation properties of finite-volume expectations for the Ising model at any fixed sub-critical temperature.

Let \(\sfA (\gammap) \defby \abs{\Delta^-_N[\gamma'] \cap \calS_{\sfu,\sfv}}\) (remember~\eqref{eq:DeltaMinus}) be the area between \(\gammap\) and \(\calL\) inside the semi-strip \(\calS_{\sfu,\sfv}\).
In view of~\eqref{eq:psi-N-sum} and~\eqref{eq:C0}, we have (remember the definition of conditional weights in~\ref{property:CondWeights})
\begin{equation}\label{eq:g-cond}
	\mu_{\BOXN;\beta,\lambda /N}^{\pm} \bigl( \gammap \bgiven \gamma_L,\gamma_R \bigr)
	\leq
 	\frac{\mathrm{e}^{C_1}}{\calZ(\gamma_L,\gamma_R)}\, q_{N,\beta}\bigl(\gammap \bgiven \gamma_L,\gamma_R \bigr) \exp\Bigl\{ -\frac{2\lambda m_\beta^*}{N} \sfA(\gammap)\Bigr\} ,
\end{equation}
where \(\calZ (\gamma_L,\gamma_R)\) is a provisional notation for the following partition function
\begin{equation}\label{eq:cal-Z}
	\calZ(\gamma_L,\gamma_R) \defby \sum_{\etap\in\SetRootDiaCont(\sfu,\sfv)} q_{N,\beta} \bigl( \etap \bgiven \gamma_L,\gamma_R\bigr) \exp\Bigl\{ -\frac{2\lambda m_\beta^*}{N} \sfA(\etap)\Bigr\} .
\end{equation}
Recall~\eqref{eq:dlrh-N-g}.
By~\eqref{eq:C2}, we may assume that, for any \(\ell\), \(\CPts(\gammap)\cap\calS_{\ell,\ell+d_N} \neq \emptyset\). This means that if we take \(d_+ \leq c_+/3\), then
\begin{equation}\label{eq:A-g}
	\sfA(\gammap) \geq (2h_N - 2d_N) \bigl( 2 N^\iota + (\ell-1)d_N + (r-1) d_N \bigr)
	\geq
	h_N \bigl( N^\iota + (\ell+r) d_N \bigr) .
\end{equation}
Moreover, by the definition of conditional weights and the GKS inequality,
\begin{equation}\label{eq:sum-g}
	\sum_{\gamma'\in\SetRootDiaCont(\sfu,\sfv)} q_{N,\beta} ( \gamma' \given \gamma_L,\gamma_R )
	\leq
	\mathrm{e}^{-\taub(\sfv-\sfu)} .
\end{equation}
Note that both estimates above are uniform in \(\gamma_L,\gamma_R\).

Let us now derive matching uniform lower bounds on \(\calZ(\gamma_L,\gamma_R)\).
We proceed in the spirit of the proof of Lemma~\ref{lem:UB-Area-g} and rely on the Ornstein--Zernike theory as developed in~\cite{Campanino-Ioffe-Velenik_2003,greenberg2005invariance}.
Consider
\[
	\sfw = \lfloor (-N^\kappa - 2 \ell d_N + N^{\frac23},2\sqrt[3]{N}) \rfloor
	\text{ and }
	\sfz = \lfloor (-N^\kappa + 2 N^\iota + 2 r d_N - N^{\frac23},2\sqrt[3]{N}) \rfloor .
\]
Let \(\calG_N (\sfu,\sfv) \subset \SetRootDiaCont(\sfu,\sfv)\) be the set of diamond-confined paths \(\eta = \eta^{\sfu\sfw} \circ \eta^{\sfw\sfz} \circ \eta^{\sfz\sfv}\)
such that the sub-paths \(\eta^{\sfu\sfw} \in \SetRootDiaCont(\sfu,\sfw)\), \(\eta^{\sfw\sfz}\in\SetRootDiaCont(\sfw,\sfz)\) and \(\eta^{\sfz\sfv}\in\SetRootDiaCont(\sfz,\sfv)\) each stay inside the strip of width \(\sqrt[3]{N}\) around the corresponding segments \([\sfu,\sfw]\), \([\sfw,\sfz]\) and \([\sfz,\sfv]\) (see Fig.~\ref{fig:5_46_LB}).

\begin{figure}
	\centering
	\input{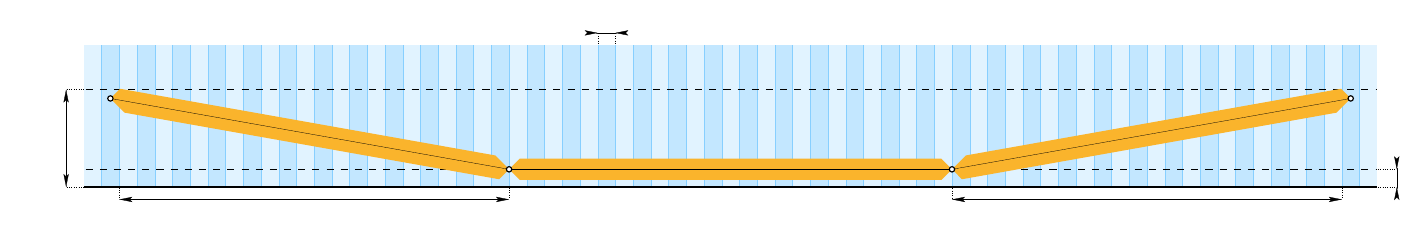_t}
	\caption{Sketch (not-to-scale) of the construction of the set \(\calG_N (\sfu,\sfv)\). We only consider paths having cone-points at \(\sfu\), \(\sfw\), \(\sfz\) and \(\sfv\) and staying inside the intersection of the relevant cones and tubes of width \(\sqrt[3]{N}\) centered on the segments \(\sfu\sfw\), \(\sfw\sfz\) and \(\sfz\sfv\).}
	\label{fig:5_46_LB}
\end{figure}%

Then,
\begin{equation}\label{eq:A-bound-eta}
\sfA (\eta^{\sfu\sfw}), \sfA (\eta^{\sfz\sfv}) \leq 3h_N N^{\frac23}
\text{ and }
\sfA (\eta^{\sfw\sfz})\leq 3\sqrt[3]{N} \bigl( 2 N^\iota + \max\{ 2 (\ell+r) d_N - 2N^{\frac23}, 0\} \bigr) .
\end{equation}
By properties~\ref{property:CondWeights} and~\ref{property:MonotVolWeight}, \(q_{N,\beta} ( \gamma' \given \gamma_L,\gamma_R ) \geq q_\beta(\gamma')\).
We can thus deduce from Lemma~\ref{lem:tube} that
\begin{equation}\label{eq:GN-lb}
	\sum_{\gamma'\in\calG_N(\sfu,\sfv)} q_{N,\beta} ( \gamma' \given \gamma_L,\gamma_R )
	\geq
	\mathrm{e}^{-\taub(\sfv-\sfu) - CN^{\iota-2/3}} .
\end{equation}
Hence, there exists \(\kappa_\beta <\infty\) such that
\begin{equation}\label{eq:LowerBndCReg3}
	\calZ(\gamma_L,\gamma_R)
	\geq
	\sum_{\gamma'\in\calG_N(\sfu,\sfv)} \mathrm{e}^{2(\lambda/N) m^*_\beta \sfA(\gamma')}
	q_{N,\beta} ( \gamma' \given \gamma_L,\gamma_R )
	\geq
	\mathrm{e}^{-\taub(\sfv-\sfu) - \kappa_\beta \frac{ N^\iota + (\ell+r) d_N}{N^{2/3}}} .
\end{equation}
Combined with~\eqref{eq:A-g} and~\eqref{eq:sum-g}, this implies~\eqref{eq:lr-bound-box-g}.
\qed

\subsection{Factorization of the irreducible decomposition}\label{sec:FactorizationIrred}

By convention, paths contain at least two vertices.
Consider~\eqref{eq:IRD1} or, more generally, paths \(\eta\subset\BOXs_{N}\) admitting the irreducible decomposition
\begin{equation}\label{eq:IRD1-gen}
	\eta = \eta_L \circ \eta_1 \circ \dots \circ \eta_k \circ \eta_R .
\end{equation}
We neither exclude \(k=0\), nor the possibility that \(\eta_L\) or \(\eta_R\) be empty.
The weights
\[
	q_{N,\beta} (\eta_L \circ \eta_1 \circ \dots \circ \eta_k \circ \eta_R)
\]
make perfect sense in the context of dual high-temperature models (see~\cite{Pfister-Velenik_1999} for an explicit description).
The fluctuation theory developed in~\cite{Campanino-Ioffe-Velenik_2003} is based on the interpretation of the infinite-volume weights \(q_\beta (\eta_L \circ \eta_1 \circ \dots \circ \eta_k \circ \eta_R)\) in terms of the action of Ruelle operators for full shifts of countable type.
An adjustment to finite volumes gives rise to a representation of interfaces in terms of effective random walks with exponentially mixing steps.
Such a representation becomes rather complex when combined with a tilt by magnetic fields as we consider here.
A representation in terms of usual random walks with independent steps would be technically much more convenient.

Let us formulate the corresponding result.
The price of factorization is irreducibility. Instead of~\eqref{eq:IRD1-gen}, consider
\begin{equation}\label{eq:D1}
	\eta = \omega_L \circ \omega_1 \circ \dots \circ \omega_M \circ \omega_R \defby \uomega ,
\end{equation}
where \(\omega_L\in\SetRootMarkBackCont\), \(\omega_R\in \SetRootMarkForwCont\) and \(\omega_1,\dots,\omega_M\in\SetRootDiaCont\) are not required to be irreducible.
In particular, the representation~\eqref{eq:D1} is not uniquely defined.
In fact, there are exponentially many (in the number \(k\) of irreducible pieces in~\eqref{eq:IRD1-gen})  different decompositions~\eqref{eq:D1} of a particular path \(\eta\) to be taken into account (see Fig.~\ref{fig:5_55}).

\begin{figure}[ht]
	\centering
	\includegraphics{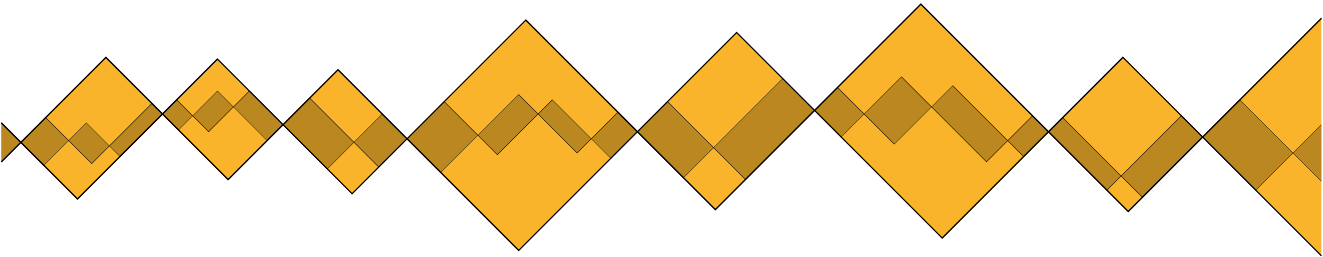}
	\caption{In yellow, the sequence of diamonds corresponding to one possible decomposition \(\omega_L \circ \omega_1 \circ \dots \circ \omega_M \circ \omega_R\). The diamonds corresponding to the original (irreducible) decomposition are drawn in a darker shade.}
	\label{fig:5_55}
\end{figure}%

Our main objective is to develop a factorization of the weights \(q_{N,\beta}(\gamma)\) associated to regular interfaces \(\gamma \in \calC_N^{\mathsf reg}\), but the whole theory applies in the full generality of concatenated paths \(\eta\) as suggested by~\eqref{eq:IRD1-gen} and, in fact, the factorization in question will be formulated in this framework in order to streamline the way it is used.

\begin{definition}
	\label{def:gam-chi}
	We shall record~\eqref{eq:D1} as a compatibility property \(\uomega\sim\eta\).
	Note that \(\uomega\sim \gamma\) implies that each of \(\omega_L,\omega_1,\dots,\omega_M,\omega_R\) are concatenations of irreducible paths appearing on the right-hand side of the irreducible decomposition~\eqref{eq:IRD1}.
	For any path \(\omega\) (for instance for \(\omega = \omega_L\)
	or \(\omega = \omega_i\) in~\eqref{eq:D1}), we shall use \(\frc(\omega) \defby \abs{\CPts(\omega)}\) to denote the number of cone-points of \(\omega\).
\end{definition}

In order to formulate the factorization result, we need one more notation:
By its very nature, \emph{any} path \(\eta = (\sfu_1,\dots,\sfu_\ell)\) always has the initial point \(\fend(\eta)\defby\sfu_1\) and the terminal point \(\bend(\eta)\defby\sfu_\ell\).
This notation is compatible with the one introduced for various confined paths in Definition~\ref{def:path-types}.
\begin{definition}
	\label{def:Displacement}
	Given a path \(\eta\), \(\sfX(\eta) \defby \bend(\eta) - \fend(\eta)\) denotes the displacement along \(\eta\).
	In the coordinate representation, we shall write \(\sfX(\eta) = (\theta(\eta), \zeta(\eta))\), where \(\theta(\eta) \defby \sfX(\eta)\cdot\sfe_1\) and \(\zeta(\eta) \defby \sfX(\eta)\cdot\sfe_2\).
\end{definition}

\begin{figure}[ht]
	\centering
	\includegraphics{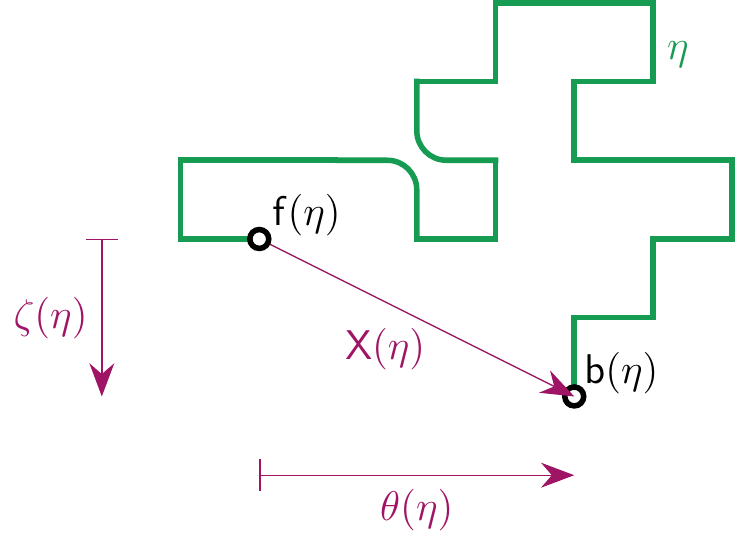}
	\caption{Notations for the displacement along a path \(\eta\).}
	\label{fig:displacement}
\end{figure}

Given non-negative weights \(\rho_{N,\beta}^L\) on \(\SetRootMarkBackCont\), \(\rho_{N,\beta}^R\) on \(\SetRootMarkForwCont\) and \(\sfp_{N,\beta}\) on \(\SetRootDiaCont\), consider the induced weights \(\WNbeta\) on the set of concatenations \(\uomega\) in~\eqref{eq:D1}.
\begin{equation}\label{eq:WN-beta}
	\WNbeta (\uomega) \defby \rho_{N,\beta}^L(\omega_L) \rho_{N,\beta}^R(\omega_R) \prod_{i=1}^M \sfp_{N,\beta}(\omega_i).
\end{equation}
We are ready to formulate our factorization result:
\begin{theorem}\label{thm:fact}
	There exist a number \(\nu_\beta >0\) and, for all \(N\) sufficiently large, weights \(\rho_{N,\beta}^L, \rho_{N,\beta}^R\) and \(\sfp_{N,\beta}\) such that:
	\begin{enumerate}[label=\(\sfW_\arabic*\)., ref=\(\sfW_\arabic*\)]
	\item\label{property:W1} For any \(\eta\subset\BOXs_{N}\) admitting an irreducible decomposition as in~\eqref{eq:IRD1-gen},
	\begin{equation}\label{eq:fact}
		\mathrm{e}^{\taub \theta(\eta)} q_{N,\beta}(\eta)
		=
		\sum_{\uomega\sim\eta} \WNbeta(\uomega) .
	\end{equation}
	In particular, \( \WNbeta(\eta ) = \mathrm{e}^{\taub \theta(\eta)} q_{N,\beta}(\eta)\) if \(\eta\) is irreducible.
	\item\label{property:W2} Recall the notation \(\frc(\omega)\) for the number of cone-points of \(\omega\). For any \(\omega\in\SetRootDiaCont\),
	\begin{equation}\label{eq:W-bound-cp}
		\sfp_{N,\beta}(\omega)
		\leq
		\mathrm{e}^{-\nu_\beta \frc(\omega)} \mathrm{e}^{\taub \theta(\omega)} q_{N,\beta}(\omega).
	\end{equation}
	The same holds for the weights \(\rho_{N,\beta}^L\) of  \(\omega\in \SetRootMarkBackCont\) and the weights \(\rho_{N,\beta}^R\) of  \(\omega\in \SetRootMarkForwCont\).
	\item\label{property:W3} There exist \(\bbZ^2\)-shift-invariant (infinite-volume) weights \(\sfp_\beta\) such that the following bulk relaxation property of the weights \(\sfp_{N,\beta}\) holds:
	For \(\omega \subset \BOXs_N\),
	\begin{equation}\label{eq:pnb-pb}
		\Babs{ \frac{\sfp_{N,\beta}(\omega)}{\sfp_\beta(\omega)} - 1 }
		\leq
		\exp\Bigl\{ \sum_{t\in\omega} \mathrm{e}^{-\nu_\beta \dd_2 (t,\partial\BOXs_N)} \Bigr\} ,
	\end{equation}
	where \(\dd_2(t,\partial\BOXs_N) \defby \min_{s\not\in\BOXs_N} \normII{t-s}\).
	Moreover, the weights \(\{\sfp_\beta\}\) are probabilistic in the following sense: For any \(\sfu\),
	\begin{equation}\label{eq:pbet-weights}
		\sum_{\sfw} \sum_{\omega\in\SetRootDiaCont(\sfu,\sfw)} \sfp_\beta(\omega) = 1 .
	\end{equation}
	\end{enumerate}
\end{theorem}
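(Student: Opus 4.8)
\medskip
\noindent\textbf{Proof strategy for Theorem~\ref{thm:fact}.}
The plan is to proceed in two stages: first to set up a finite-volume Ruelle (transfer) operator representation of the weights $q_{N,\beta}$ read along the irreducible decomposition~\eqref{eq:IRD1-gen}, adapting to the boxes $\BOXs_N$ the infinite-volume analysis of~\cite{Campanino-Ioffe-Velenik_2003,Campanino-Ioffe-Velenik_2008}; and then to resum that representation into genuine \emph{independent} steps. For the first stage, I would encode the local constraints at cone-points by a compact alphabet of ``boundary shapes'', work on a Banach space $\calX$ of functions of shapes, and build a family of bounded, positivity-preserving operators $\{\transferOp_{N,\omega}\}_{\omega\in\SetRootDiaCont^{\textnormal{irr}}}$ together with boundary functionals $\Phi^L_N(\cdot),\Phi^R_N(\cdot)$ such that, for every $\eta\subset\BOXs_N$ with irreducible decomposition $\eta=\eta_L\circ\eta_1\circ\dots\circ\eta_k\circ\eta_R$,
\[
	\mathrm{e}^{\taub\theta(\eta)}\, q_{N,\beta}(\eta)
	=
	\bigl\langle \Phi^L_N(\eta_L)\,,\ \transferOp_{N,\eta_1}\cdots\transferOp_{N,\eta_k}\, \Phi^R_N(\eta_R)\bigr\rangle ,
\]
each $\transferOp_{N,\omega}$ carrying a factor $\mathrm{e}^{\taub\theta(\omega)}q_{N,\beta}(\omega)$. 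I would then establish, \emph{uniformly in $N$ large}, that the total operator $\transferOp_N\defby\sum_\omega\transferOp_{N,\omega}$ has spectral radius $1$, a simple leading eigenvalue $1$ with a strictly positive eigenvector $\mathbf h_N$, and all the rest of its spectrum inside a disk of radius $\mathrm{e}^{-2\nu_\beta}<1$: the value $1$ of the leading eigenvalue is the Ornstein--Zernike characterization of $\taub$ (compare~\eqref{eq:Prop4} and~\eqref{eq:ratio-q-mu}), the matching lower bounds being supplied by Lemma~\ref{lem:tube}, while the spectral gap is the operator form of the mass-gap bound~\eqref{eq:massgap-mu}. Uniformity in $N$ would come from the domain-monotonicity~\ref{property:MonotVolSum} and the quantitative mixing~\eqref{eq:P5}--\eqref{eq:cbeta-nubeta}, which place $\transferOp_{N,\omega}$ in a small perturbative neighbourhood of its shift-invariant infinite-volume counterpart $\transferOp_\omega$ (built from $q_\beta$).

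For the second stage, I would perform the standard renewal resummation. Conjugating by $\mathbf h_N$ and using the gap, I would split $\transferOp_{N,\omega}=\mathbf P_{N,\omega}+\mathbf Q_{N,\omega}$, where $\mathbf P_{N,\omega}$ is the rank-one ``leading'' part driven by $\mathbf h_N$ and $\sum_\omega\|\mathbf Q_{N,\omega}\|\leq\mathrm{e}^{-2\nu_\beta}$ in the relevant norm. Expanding the product $\transferOp_{N,\eta_1}\cdots\transferOp_{N,\eta_k}$ over the choice of $\mathbf P$ versus $\mathbf Q$ at each of the $k$ slots and collapsing every maximal run of consecutive $\mathbf Q$'s (together with the single bordering $\mathbf P$) into one diamond-confined ``super-step'' $\omega_a=\eta_i\circ\dots\circ\eta_j$ produces exactly a decomposition $\uomega\sim\eta$ as in~\eqref{eq:D1}. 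Declaring $\sfp_{N,\beta}(\omega_a)$ to be the scalar that the corresponding operator block contributes in the $\mathbf h_N$-conjugated expansion (times the accumulated $\mathrm{e}^{\taub\theta}q_{N,\beta}$ factors), and folding the two boundary functionals and their adjacent $\mathbf Q$-blocks into $\rho^L_{N,\beta}(\omega_L)$ and $\rho^R_{N,\beta}(\omega_R)$, the expansion becomes precisely the factorization~\eqref{eq:fact} (including its special case for irreducible $\eta$); this is~\ref{property:W1}. Since every ingredient ($\transferOp_{N,\omega}$, $\mathbf P_{N,\omega}$, $\mathbf Q_{N,\omega}$, $\mathbf h_N$ and the boundary data) is assembled from the nonnegative weights $q_{N,\beta}$ and the positive eigenvector, all three families of weights would come out nonnegative.

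Property~\ref{property:W2} then follows from counting: a super-step $\omega$ made of $\frc(\omega)-1$ irreducible constituents contains a block of at least $\frc(\omega)-2$ consecutive $\mathbf Q$-factors, so $\sfp_{N,\beta}(\omega)\leq C_\beta\,\mathrm{e}^{-2\nu_\beta(\frc(\omega)-2)}\,\mathrm{e}^{\taub\theta(\omega)}q_{N,\beta}(\omega)$, which is~\eqref{eq:W-bound-cp} after relabelling $\nu_\beta$ and absorbing the finitely many small values of $\frc(\omega)$; the same bound applies to $\rho^L_{N,\beta}$ and $\rho^R_{N,\beta}$. For~\ref{property:W3}, I would run the whole construction with $q_\beta$ in place of $q_{N,\beta}$ to obtain the weights $\sfp_\beta$: they are $\bbZ^2$-shift-invariant because $q_\beta$ is, and the bulk relaxation~\eqref{eq:pnb-pb} is the operator-norm comparison $\transferOp_{N,\omega}\approx\transferOp_\omega$ of the first stage (i.e.~\eqref{eq:P5}) transported through the spectral projection and the matrix-element maps, both analytic on the perturbative neighbourhood. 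Finally, $\sum_\sfw\sum_{\omega\in\SetRootDiaCont(\sfu,\sfw)}\sfp_\beta(\omega)$ is the total mass of the super-step kernel, which by construction is the $\mathbf h$-conjugate of $\transferOp$ reduced to its leading eigenvalue, hence stochastic; this is~\eqref{eq:pbet-weights}.

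I expect the hard part to be the first stage in \emph{finite volume}: \cite{Campanino-Ioffe-Velenik_2003,Campanino-Ioffe-Velenik_2008} develop the Ruelle-operator machinery and its spectral gap for the translation-invariant weights $q_\beta$, and one must upgrade this to $\BOXs_N$ with \emph{all} constants---the gap $2\nu_\beta$ in particular---uniform in $N$. This is precisely what~\ref{property:MonotVolSum}, \eqref{eq:P5} and~\eqref{eq:cbeta-nubeta} are for: they let one couple $\transferOp_{N,\omega}$ to $\transferOp_\omega$ up to errors exponentially small in $\dd_2(\cdot,\partial\BOXs_N)$, so that for $N$ large $\transferOp_N$ is a small perturbation of $\transferOp$ and inherits its leading eigenvalue and gap. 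Once this is in place, the remaining work---the combinatorics of the $\mathbf P/\mathbf Q$ expansion, the bookkeeping of $\omega_L$ and $\omega_R$, and the verification of nonnegativity and of~\ref{property:W2} with honest constants---is routine.
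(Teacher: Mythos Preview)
Your proposal takes a genuinely different route from the paper, and it has a gap at the point where you claim nonnegativity.

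The paper's proof (Appendix~\ref{app:renewal_perco}) bypasses Ruelle operators and spectral theory entirely. It starts from the telescopic identity
\[
q_{N,\beta}(\eta)=\prod_{i=1}^m q_{N,\beta}(\eta_i)\prod_{I\in\INts(\eta)}\bigl(1+\rho_{N,\beta}(I\mid\eta)\bigr),
\]
where the $\rho_{N,\beta}(I\mid\eta)$ are ratios of conditional weights and are \emph{nonnegative} by domain monotonicity~\ref{property:MonotVolWeight}. Setting $\calP_{N,\beta}(I\mid\eta)=\rho/(1+\rho)\in[0,1)$ turns the product into the partition function of an independent Bernoulli percolation on $\INts(\eta)$; expanding and regrouping by the induced maximal connected clusters of $\{1,\dots,m\}$ yields exactly~\eqref{eq:fact}, with $\sfp_{N,\beta}(\omega)=\mathrm{e}^{\taub\theta(\omega)}q_{N,\beta}(\omega)\,\calP_{N,\beta}(I_\omega\text{ connected})$. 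Nonnegativity is manifest; \ref{property:W2} is then a Cram\'er bound for the percolation ``reach'' variables, and~\ref{property:W3} is the mixing estimate~\eqref{eq:P5} carried through the same construction with $q_\beta$. The paper explicitly remarks that this simplicity is bought with~\ref{property:MonotVolWeight}, and that a more robust (and more involved) version not requiring monotonicity is in~\cite{Ott-Velenik_2017}.

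Your spectral route is closer in spirit to the latter, but the sentence ``since every ingredient \dots\ is assembled from the nonnegative weights $q_{N,\beta}$ and the positive eigenvector, all three families of weights would come out nonnegative'' does not hold for the splitting you describe. Writing $\transferOp_{N,\omega}=\mathbf P_{N,\omega}+\mathbf Q_{N,\omega}$ with $\mathbf P_{N,\omega}$ the rank-one spectral part means $\mathbf Q_{N,\omega}$ is obtained by \emph{subtracting} a projection from a positive operator, and the resulting block weights---matrix elements of the form $\langle h^*,\transferOp_{\omega_i}(1-\Pi)\transferOp_{\omega_{i+1}}\cdots(1-\Pi)\transferOp_{\omega_j}h\rangle$---have no reason to be nonnegative. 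The paper itself notes, just before the theorem, that the Ruelle-operator picture of~\cite{Campanino-Ioffe-Velenik_2003} naturally produces \emph{exponentially mixing} rather than independent probabilistic steps. This matters here: the weights are immediately used to define the probability measures $\PNbeta$, $\Pbetap{\sfu,\sfv}$, $\Pbetapl{\sfu,\sfv}$, and~\eqref{eq:pbet-weights} asserts that $\sfp_\beta$ is a transition probability. To rescue your approach you would need a positivity-preserving splitting (a Nummelin-type minorization $\transferOp_{N,\omega}\geq c_\omega\,\mathbf h_N\otimes\mathbf h_N^*$, with a separate argument for the contraction of the remainder) rather than the spectral one, or else to follow the construction of~\cite{Ott-Velenik_2017}; the paper's shortcut is to use~\ref{property:MonotVolWeight} instead.
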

For maximal effect, the factorization properties \ref{property:W1}--\ref{property:W3} above should be combined with properties~\ref{property:SupportedInVol}--\,\ref{property:Mixing} of the path weights \(q_{N,\beta}\).

The proof of Theorem~\ref{thm:fact} is relegated to the Appendix~\ref{app:renewal_perco}.
In particular, the weights \(\{\sfp_{N,\beta}\}\) are defined in~\eqref{eq:pnbeta} there.
It boils down to a rather simple percolation argument based on a form of high-temperature expansion for one-dimensional systems with exponentially decaying interactions.
This expansion relies on the domain-monotonicity property~\ref{property:MonotVolWeight} of contour weights.

\begin{remark}\label{rem:ZeroExp}
We shall use several times the observation that the expectation of \(\zeta(\eta)\) (remember Definition~\ref{def:Displacement}) under \(\sfp_\beta\) necessarily vanishes. Indeed, were it not the case, a simple large deviations bound relying on~\eqref{eq:WN-beta}, \eqref{eq:fact} and~\eqref{eq:W-bound-cp} would imply that \(\mathrm{e}^{2\taub N}\sum_{\eta\in\calC_N} q_{N,\beta}(\eta)\) decays exponentially fast in \(N\), which would contradict~\eqref{eq:ratio-q-mu}.
\end{remark}

\subsubsection{Interfaces in low-temperature Potts models}
\label{sub:ltPotts}

A more robust (but slightly more involved) version of Theorem~\ref{thm:fact}, which does not require domain monotonicity and applies, for instance, to a wide range of random-cluster models, is presented in~\cite{Ott-Velenik_2017} and was very recently used in~\cite{Ioffe+Ott+Velenik+Wachtel-20} to study scaling limits of Potts/FK interfaces above a wall.
We shall rely on these results in the following way: The zero-field measure \(\mu_{\BOXN;\beta,0}^{\pm}\) was analyzed in~\cite{Ioffe+Ott+Velenik+Wachtel-20} on the more general level of nearest-neighbor Potts models below the critical temperature, in their random-cluster representation.
As is the case here, the crucial role was played by the irreducible decomposition of the corresponding percolation clusters.
In particular, the notion of cone-points was well defined.
In fact, under the Edwards--Sokal coupling, all the cone-points of the underlying random-cluster component associated to \(\gamma\) in~\cite{Ioffe+Ott+Velenik+Wachtel-20} belong to the set \(\CPts(\gamma)\).
We shall therefore routinely import from~\cite{Ioffe+Ott+Velenik+Wachtel-20} various upper bounds on the probability of atypically small densities of cone-points or atypically large sizes of irreducible components in~\eqref{eq:IRD1}.
For instance, Proposition~\ref{prop:ref-Potts-mg} is an immediate consequence of the computations employed in the proof of Lemma~2.2 in~\cite{Ioffe+Ott+Velenik+Wachtel-20}.

\subsection{Random-walk representation and effective reduction of the interface}\label{sec:RWRepr}

Theorem~\ref{thm:fact} paves the way for a probabilistic analysis in terms of random walks under linear area tilts.
Suppose we have a collection \( \omega_1, \dots , \omega_k\) of paths, satisfying the property

\[
	\bend (\omega_1) = \fend (\omega_2), \dots , \bend(\omega_{\ell-1}) = \fend(\omega_\ell).
\]
Then we can form their concatenation  \(\uomega = \omega_1 \circ \dots \circ \omega_k\) in an obvious way. In the following, we will naturally restrict ourselves to the admissible concatenations, meaning that the path $\uomega$ satisfies all the restrictions above.
\begin{definition}\label{def:S-walk}
	Given a concatenation \(\uomega = \omega_1 \circ \dots \circ \omega_\ell\), define \(\sfS_0 \defby \fend(\omega_1)\), \(\sfS_1 \defby \fend(\omega_2)\), \dots , \(\sfS_\ell \defby \bend(\omega_\ell)\)
	and set
	\begin{equation}\label{eq:S-walk}
		\sfS (\uomega) \defby \{\sfS_0, \sfS_1, \dots, \sfS_\ell\} .
	\end{equation}
	We shall think about \(\sfS(\uomega)\) in terms of an effective random walk through the vertices of \(\uomega\) with steps \(\sfX_i \defby \sfS_i - \sfS_{i-1} = \sfX(\omega_i)\).
\end{definition}
In particular, if \(\eta\) admits the irreducible representation~\eqref{eq:IRD1-gen}, then the induced trajectories of the effective random walk \(\sfS(\uomega)\) correspond to various compatible concatenations
\(\uomega\sim\eta\) with weights \(\WNbeta\) defined in~\eqref{eq:WN-beta}.

\medskip
Let \(\underline{\Omega}_N\) be the set of all compatible concatenations \(\uomega\sim \gamma\) of the interface \(\gamma\) with at least one cone-point.
\begin{definition}
	We define two probability measures \(\PNbeta\) and \(\PNbetal\) on \(\uOmega_N\) by
	\begin{equation}\label{eq:PNb-PNbl}
		\PNbeta(\uomega) \propto \WNbeta(\uomega)
		\quad\text{and}\quad
	\PNbetal(\uomega) \propto \exp\Bigl[ - 2 \frac{\lambda m^*_\beta}{N} \abs{\BOX_N^-[\uomega]} + \frac{\lambda \psi_N(\uomega)}{N} \Bigr] \PNbeta(\uomega) ,
	\end{equation}
	where the weights \(\WNbeta\) were defined in~\eqref{eq:WN-beta} and, following~\eqref{eq:Comp-BN-bound},
	\[
		\BOX_N^-[\uomega] \defby \BOX_N^-[\gamma]
		\quad\text{ and }\quad
		\psi_N(\uomega) \defby \psi_N(\gamma)
	\]
	if \(\uomega \sim \gamma\).
	\end{definition}

Recall the Definition~\ref{def:Creg} of the set \(\calC_N^{\sf reg}\) of regular interfaces.
Define
\begin{equation}\label{eq:Om-reg}
	\uOmega_N^{\sf reg} = \uOmega_N^{\sf reg} (c_+, d_+, \kappa, \iota, \epsilon) \defby \bigcup_{\gamma\in\calC_N^{\sf reg}} \setof{\uomega}{\uomega\sim\gamma} .
\end{equation}
In view of~\eqref{eq:fact}, Theorem~\ref{thm:ERW-g} implies that
\begin{theorem}\label{thm:ERW-om}
	For any \(\lambda\geq 0\), there exists a choice of parameters in~\eqref{eq:kappa-scales-g} and~\eqref{eq:dlrh-N-g} such that
 	\begin{equation}\label{eq:ERW-2}
	 	\lim_{N\to\infty} \PNbetal \bigl( \uomega \notin \uOmega_N^{\sf reg}(c_+,d_+,\kappa,\iota,\epsilon) \bigr) = 0 .
 	\end{equation}
\end{theorem}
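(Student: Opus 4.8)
The plan is to deduce Theorem~\ref{thm:ERW-om} from Theorem~\ref{thm:ERW-g} by transporting the latter estimate through the factorization of Theorem~\ref{thm:fact}. First I would record the elementary fact that forgetting the decomposition structure sends each concatenation $\uomega\in\uOmega_N$ to a well-defined interface $\gamma(\uomega)\in\calC_N$, and that, because a regular interface carries at least $N/C$ cone-points, the definition~\eqref{eq:Om-reg} reads $\uOmega_N^{\sf reg} = \setof{\uomega\in\uOmega_N}{\gamma(\uomega)\in\calC_N^{\sf reg}}$. Hence $\{\uomega\notin\uOmega_N^{\sf reg}\} = \{\gamma(\uomega)\notin\calC_N^{\sf reg}\}$, and it suffices to bound the $\PNbetal$-probability of the right-hand event.

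Next I would compute the law of $\gamma(\uomega)$ under $\PNbetal$. By~\eqref{eq:PNb-PNbl}, $\PNbeta\propto\WNbeta$ and the additional tilt in $\PNbetal$ depends on $\uomega$ only through $\gamma(\uomega)$ (since $\BOX_N^-[\uomega]$ and $\psi_N(\uomega)$ are defined via $\gamma(\uomega)$). Summing $\WNbeta$ over the fiber above a fixed interface $\gamma$ with $\CPts(\gamma)\neq\emptyset$ and using property~\ref{property:W1} (the identity~\eqref{eq:fact}) gives $\sum_{\uomega\sim\gamma}\WNbeta(\uomega) = \mathrm{e}^{\taub\theta(\gamma)}q_{N,\beta}(\gamma) = \mathrm{e}^{\taub(2N+1)}q_{N,\beta}(\gamma)$, the horizontal displacement $\theta(\gamma) = \rc{N}\cdot\sfe_1-\lc{N}\cdot\sfe_1$ being the same constant $2N+1$ for every $\gamma\in\calC_N$. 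Therefore, for any $A\subset\calC_N$,
\[
	\PNbetal\bigl(\gamma(\uomega)\in A\bigr)
	=
	\frac{\sum_{\gamma\in A,\,\CPts(\gamma)\neq\emptyset}\exp\bigl[-2\tfrac{\lambda m^*_\beta}{N}\abs{\BOX_N^-[\gamma]}+\tfrac{\lambda\psi_N(\gamma)}{N}\bigr]q_{N,\beta}(\gamma)}{\sum_{\gamma\colon\CPts(\gamma)\neq\emptyset}\exp\bigl[-2\tfrac{\lambda m^*_\beta}{N}\abs{\BOX_N^-[\gamma]}+\tfrac{\lambda\psi_N(\gamma)}{N}\bigr]q_{N,\beta}(\gamma)},
\]
the factor $\mathrm{e}^{\taub(2N+1)}$ cancelling. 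By the tilted-area representation~\eqref{eq:Comp-BN-bound-q}, the summand equals $(1+\smo{1})\,Z_N\,\mu_{\BOXN;\beta,\lambda/N}^{\pm}(\gamma)$, with $Z_N$ not depending on $\gamma$ and the $\smo{1}$ uniform in $\gamma\in\calC_N$.

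Finally, taking $A=\calC_N\setminus\calC_N^{\sf reg}$, I would bound the numerator from above by the unrestricted sum over $\gamma\notin\calC_N^{\sf reg}$ and the denominator from below by the sum over $\gamma\in\calC_N^{\sf reg}$ (legitimate since regular interfaces have cone-points), obtaining
\[
	\PNbetal\bigl(\uomega\notin\uOmega_N^{\sf reg}\bigr)
	\leq
	\frac{(1+\smo{1})\,\mu_{\BOXN;\beta,\lambda/N}^{\pm}(\gamma\notin\calC_N^{\sf reg})}{(1+\smo{1})\,\mu_{\BOXN;\beta,\lambda/N}^{\pm}(\gamma\in\calC_N^{\sf reg})},
\]
and then invoke Theorem~\ref{thm:ERW-g}, which forces the numerator to $0$ and the denominator to $1$. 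I do not expect a genuine obstacle: the argument is essentially bookkeeping around~\eqref{eq:fact} and~\eqref{eq:Comp-BN-bound-q}. The only delicate point is to check that the ``at least one cone-point'' restriction hidden in the definition of $\uOmega_N$ is immaterial, which is exactly what the two-sided comparison above takes care of (alternatively one can absorb it using the mass-gap bound~\eqref{eq:massgap-mu}).
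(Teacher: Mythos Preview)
Your proposal is correct and follows exactly the same route as the paper, which treats the theorem as an immediate consequence of Theorem~\ref{thm:ERW-g} via the factorization identity~\eqref{eq:fact} (the paper's entire proof is the phrase ``In view of~\eqref{eq:fact}, Theorem~\ref{thm:ERW-g} implies that''). You have simply spelled out the bookkeeping that the paper leaves implicit: the push-forward of $\PNbetal$ under $\uomega\mapsto\gamma(\uomega)$ coincides, up to the harmless cone-point restriction, with $\mu_{\BOXN;\beta,\lambda/N}^{\pm}$ because the factor $\mathrm{e}^{\taub\theta(\gamma)}$ is constant on $\calC_N$ and the tilt depends only on $\gamma$.
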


\subsection{Coupling with random walks driven by infinite-volume weights}
\label{sub:cpl}

Our proof of the invariance principle in Section~\ref{sec:Proof} is based on the coupling between the interface \(\gamma\) and effective random walks under linear area tilts, which is stated in Theorem~\ref{thm:coupling} below.
We need to define some additional notation.

\smallskip
Let \(\sfu,\sfv \in \bbH_+^*\) and let \(\uOmega^{\sfu,\sfv}\) be the set of all concatenations
\(\uomega \sim \eta \in \SetRootDiaCont(\sfu,\sfv)\).
Recall that, in general, since we have compromised on irreducibility, there are many different concatenations \(\uomega\) which are compatible with a path \(\eta \in \SetRootDiaCont(\sfu,\sfv)\).
Define the following weights on \(\uomega = (\omega_1, \dots, \omega_M) \in \uOmega^{\sfu,\sfv}\) and, accordingly, define the probability measures
\begin{equation}\label{eq:Wbeta}
	\Wbeta(\uomega) \defby \prod_{i=1}^M \sfp_\beta(\omega_i)
	\quad\text{ and }\quad
	\Pbeta{\sfu,\sfv}(\uomega) \propto \Wbeta(\uomega) .
\end{equation}
As we are interested in \(\uomega \subset \bbH_+^*\), we define the conditional measure
\begin{equation}\label{eq:Pbetap}
	\Pbetap{\sfu,\sfv}(\cdot) = \Pbeta{\sfu,\sfv}(\cdot \given \uomega \subset \bbH_+^*) .
\end{equation}
As was already mentioned, for any diamond-confined \(\omega \in \SetRootDiaCont\) lying above \(\calL\), one can unambiguously define, using the deformation rules as specified in Subsection~\ref{sub:random-lines}, the area \(\sfA (\omega)\) between \(\omega\) and \(\calL\) inside the semi-strip \(\calS_{\fend(\omega),\bend(\omega)}\).
If \(\uomega = \omega_1 \circ \dots \circ \omega_M \subset \bbH_+^*\), we set
\begin{equation}\label{eq:A-uomega}
	\sfA (\uomega) \defby \sum_{i=1}^M \sfA(\omega_i) .
\end{equation}
Finally, we define
\begin{equation}\label{eq:Pbetapl}
	\Pbetapl{\sfu,\sfv}(\uomega)
	\propto
	\exp\Bigl\{ -\frac{2\lambda m^*_\beta}{N} \sum_{i=1}^M \sfA (\omega_i) \Bigr\}
	\Pbetap{\sfu,\sfv}(\uomega) .
\end{equation}
More generally, let \(\xi_L\) and \(\xi_R\) be two probability measures on \(\bbH_+^*\) such that
\(\xi_L\otimes\xi_R\) is supported on the set
\[
	\setof{ (\sfu,\sfv)}{\sfv\in \sfu + \fcone} .
\]
Then, let us define
\begin{equation}\label{eq:Pbetapl-xi}
	\Pbeta{\xi_L,\xi_R} \defby \sum_{\sfu,\sfv} \xi_L(\sfu) \xi_{R}(\sfv)\, \Pbeta{\sfu,\sfv}
	\quad\text{ and }\quad
	\Pbetapl{\xi_L,\xi_R} \defby \sum_{\sfu,\sfv} \xi_L(\sfu) \xi_{R}(\sfv)\, \Pbetapl{\sfu,\sfv} .
\end{equation}

Let \(\gamma\in\calC_N\) be an interface as sampled from \(\mu_{\BOXN;\beta,\lambda/N}^{\pm}\).
Given \(\sfu,\sfv\) and \(\uomega\in\uOmega^{\sfu,\sfv}\), let us say that \(\{\uomega \subset \gamma\}\) if \(\sfu\) and \(\sfv\) are cone-points of \(\gamma\) and \(\uomega \sim \gamma \cap \calD(\sfu,\sfv)\).
\begin{theorem}\label{thm:coupling}
 	Recall~\eqref{eq:kappa-scales-g} and~\eqref{eq:dlrh-N-g}.
 	For all \(N\) sufficiently large, there exist
	\begin{enumerate}[label=(\roman*)]
		\item probability measures \(\xi_L^N\) on \(\calB_{N^{\iota},h_N \log N}(\ell_N)\)
		and \(\xi_R^N\) on \(\calB_{N^{\iota},h_N \log N}(r_N)\);
 		\item a coupling \(\Phi_N\) between \(\mu_{\BOXN;\beta,\lambda/N}^{\pm}\) and \(\Pbetapl{\xi_L^N,\xi_R^N}\) such that
	 	\begin{equation}\label{eqPhiN}
		 	\lim_{N\to\infty} \Phi_N (\uomega\not\subset\gamma) = 0.
	 	\end{equation}
	\end{enumerate}
\end{theorem}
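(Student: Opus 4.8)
The plan is to construct $\Phi_N$ by sampling $\gamma\sim\mu_{\BOXN;\beta,\lambda/N}^{\pm}$, reducing to regular interfaces, and then resampling the single ``bulk'' portion $\gamma\cap\calD(\sfu,\sfv)$ of $\gamma$ lying between two canonically chosen cone-points $\sfu,\sfv$. First, by Theorem~\ref{thm:ERW-om} (equivalently Theorem~\ref{thm:ERW-g}) we may, for a good choice of the parameters in~\eqref{eq:kappa-scales-g}--\eqref{eq:dlrh-N-g}, restrict to the event $\{\gamma\in\calC_N^{\sf reg}\}$, whose probability tends to $1$; on its complement $\Phi_N$ simply outputs an independent draw from $\Pbetapl{\xi_L^N,\xi_R^N}$, contributing $\smo{1}$ to the coupling failure probability. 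On $\calC_N^{\sf reg}$, property~\eqref{eq:C3} supplies cone-points of $\gamma$ in $\calB_{N^\iota,2h_N}(\ell_N)$ and in $\calB_{N^\iota,2h_N}(r_N)$; let $\sfu$ be the left-most one in the first box and $\sfv$ the right-most one in the second. By~\eqref{eq:C1} one has $N^\epsilon\le\sfu_2,\sfv_2\le 2h_N\le h_N\log N$, so $\sfu,\sfv$ lie in the boxes prescribed by the statement (and automatically $\sfv\in\sfu+\fcone$). Write $\gamma=\gamma_L\circ\gamma^\circ\circ\gamma_R$ with $\gamma^\circ=\gamma\cap\calD(\sfu,\sfv)\in\SetRootDiaCont(\sfu,\sfv)$ and, using the factorization~\eqref{eq:fact} of Theorem~\ref{thm:fact}, resample a compatible decomposition $\uomega^\circ\sim\gamma^\circ$. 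Crucially, $\gamma^\circ$ has horizontal width $\le 2N^\kappa$, lies above height $N^\epsilon$ and far from the vertical sides of $\BOX_N$ by~\eqref{eq:C1}, and has length $\abs{\gamma^\circ}=\bgo{N^\kappa}$ by~\eqref{eq:C0} and the skeleton estimates; in particular $\gamma^\circ\subset\bbH_+^*$.

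Next, fixing $\gamma_L,\gamma_R$ (hence $\sfu,\sfv$), I would combine the tilted-area representation~\eqref{eq:Comp-BN-bound-q}, the conditional-weight identity~\ref{property:CondWeights} and the factorization~\eqref{eq:fact} to write the conditional law of $\uomega^\circ$ as being proportional to $\exp\{-\tfrac{2\lambda m^*_\beta}{N}\sfA(\uomega)+\tfrac{\lambda}{N}\varrho_N(\uomega;\gamma_L,\gamma_R)\}$ times a product of conditional step weights in the domain $\BOX_N\setminus(\Delta(\gamma_L)\cup\Delta(\gamma_R))$, where $\varrho_N$ collects the $\uomega$-dependence of $\psi_N$ — which is $\bgo{\abs{\gamma^\circ}}$ by the domain-decoupling identity~\eqref{eq:psi-N-sum} — together with the defect $\abs{\BOX_N^-[\gamma]}-\sfA(\uomega)$ between the lattice area and the polygonal area $\sfA(\uomega)=\sum_i\sfA(\omega_i)$, also $\bgo{\abs{\gamma^\circ}}$ up to a constant fixed by $\gamma_L,\gamma_R$. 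Since $\kappa<1$, the extra tilt satisfies $\tfrac{\lambda}{N}\varrho_N=\bgo{N^{\kappa-1}}=\smo{1}$ uniformly and is negligible. Using the bulk-relaxation property~\ref{property:W3} and the mixing property~\ref{property:Mixing} — with every renewal piece of $\gamma^\circ$ at height $\ge N^\epsilon$ and far from $\partial\BOX_N$, and $\Delta(\gamma_L)\cup\Delta(\gamma_R)$ relevant only in $\bgo{\log N}$-neighbourhoods of $\sfu,\sfv$ — one replaces the conditional step weights by the infinite-volume weights $\sfp_\beta$ at uniform multiplicative cost $1+\smo{1}$. Hence the conditional law of $\uomega^\circ$ given $(\gamma_L,\gamma_R)$ is within $\smo{1}$ in total variation of $\Pbetapl{\sfu,\sfv}$, uniformly over all $(\gamma_L,\gamma_R)$ compatible with a regular interface.

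Finally, take $\xi_L^N$ and $\xi_R^N$ to be the $\mu_{\BOXN;\beta,\lambda/N}^{\pm}(\cdot\given\gamma\in\calC_N^{\sf reg})$-laws of $\sfu$ and of $\sfv$; by the first paragraph they are supported on the required boxes. Since $\sfu$ and $\sfv$ are determined by the restrictions of $\gamma$ to the left of $-N^\kappa+2N^\iota$ and to the right of $N^\kappa-2N^\iota$, which are separated by a macroscopic gap through which property~\eqref{eq:C2} guarantees a cone-point, the BK-splitting~\ref{property:BK}, the mixing~\ref{property:Mixing} and the additivity of $\abs{\BOX_N^-[\cdot]}$ across a cone-point show that the joint law of $(\sfu,\sfv)$ is within $\smo{1}$ in total variation of $\xi_L^N\otimes\xi_R^N$. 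Assembling: on $\{\gamma\in\calC_N^{\sf reg}\}$ couple $(\sfu,\sfv)$ to a draw from $\xi_L^N\otimes\xi_R^N$ (cost $\smo{1}$) and then, given the coupled endpoints, couple $\uomega^\circ$ to a draw from $\Pbetapl{\sfu,\sfv}$ (cost $\smo{1}$ by the previous paragraph); the resulting $\uomega$ has law $\Pbetapl{\xi_L^N,\xi_R^N}$ and coincides with a decomposition of $\gamma\cap\calD(\sfu,\sfv)$ off an event of probability $\smo{1}$, which is~\eqref{eqPhiN}.

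The main obstacle is the uniform (in $\gamma_L,\gamma_R$) total-variation estimate of the second paragraph: one must absorb both the $\psi_N$-correction and the lattice-versus-polygonal area defect into the error — which works only because the resampled piece has length $\bgo{N^\kappa}=\smo{N}$, i.e.\ $\kappa$ is a fixed exponent strictly below $1$ — and simultaneously trade the finite-volume, $\gamma_L,\gamma_R$-dependent step weights for the infinite-volume weights $\sfp_\beta$, the effectiveness of the mixing bounds near the joints $\sfu,\sfv$ hinging on the height lower bound $\sfu_2,\sfv_2\ge N^\epsilon$ coming from the entropic-repulsion estimate~\eqref{eq:C1}. The asymptotic independence of $\sfu$ and $\sfv$ used to replace their joint law by the product $\xi_L^N\otimes\xi_R^N$ is a secondary, more routine, point.
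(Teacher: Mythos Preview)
Your overall strategy is close to the paper's, but the order of operations in your second paragraph creates a genuine difficulty that the paper's route avoids. You condition on $(\gamma_L,\gamma_R)$ first, obtaining by~\ref{property:CondWeights} a weight $q_{\Lambda',\beta}(\gamma^\circ)$ with $\Lambda'=\BOX_N\setminus(\Delta(\gamma_L)\cup\Delta(\gamma_R))$, and only then invoke a factorization into ``conditional step weights''. Two problems arise. First, Theorem~\ref{thm:fact} is stated and proved for $q_{N,\beta}=q_{\BOX_N,\beta}$, not for a $(\gamma_L,\gamma_R)$-dependent domain; you are implicitly assuming a domain-general version. Second, and more seriously, even granting such a version, the comparison to $\sfp_\beta$ must be made relative to $\partial\Lambda'$, which contains $\Delta(\gamma_L)\cup\Delta(\gamma_R)$ \emph{adjacent} to the endpoints $\sfu,\sfv$ of $\gamma^\circ$. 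The mixing bounds~\eqref{eq:P5} or~\eqref{eq:pnb-pb} then contribute an $O(1)$ multiplicative error from the first and last few diamonds of $\uomega^\circ$, uniformly bounded but not $1+\smo{1}$; this does not yield the uniform total-variation $\smo{1}$ you claim. Your assertion that the effect of $\Delta(\gamma_L)\cup\Delta(\gamma_R)$ is ``relevant only in $\bgo{\log N}$-neighbourhoods'' correctly identifies where the error lives but does not make it small.

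The paper sidesteps this by reversing the order: it works from the outset with the measure $\PNbetal$ on decompositions $\uomega$ (Theorem~\ref{thm:ERW-om}), whose unnormalized weight $\WNbeta(\uomega)=\rho_{N,\beta}^L(\omega_L)\prod_i\sfp_{N,\beta}(\omega_i)\,\rho_{N,\beta}^R(\omega_R)$ is \emph{already} a genuine product in the fixed domain $\BOX_N$. Conditioning on $(\uomega_L,\uomega_R)$ is then trivial---the conditional law of the middle string $\uomega'$ is exactly proportional to $\prod_i\sfp_{N,\beta}(\omega_i)$ times the area tilt---and the bulk-relaxation bound~\eqref{eq:pnb-pb} compares $\sfp_{N,\beta}(\omega_i)$ to $\sfp_\beta(\omega_i)$ via the distance to $\partial\BOX_N$, which by~\eqref{eq:C1} is at least $N^\epsilon$ for \emph{every} middle piece, giving the uniform $1+\smo{1}$ factor. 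The paper also selects $\sfu,\sfv$ as vertices of $\sfS(\uomega)$ rather than cone-points of $\gamma$; combined with the restriction $\max_i\frc(\omega_i)\le C_\beta\log N$ (from~\eqref{eq:W-bound-cp}) this is why the target boxes have height $h_N\log N$ rather than $2h_N$.

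Two minor points. Your length bound $\abs{\gamma^\circ}=\bgo{N^\kappa}$ is too optimistic as stated; one needs an extra high-probability restriction on the diameter of irreducible pieces (the paper uses $(\log N)^2$, see~\eqref{eq:Box-sum}) to get $\abs{\uomega'}=\smo{N^\kappa(\log N)^2}$, which is what actually feeds into~\eqref{eq:psi-N-sum}. Finally, your third paragraph argues for the asymptotic product structure of the $(\sfu,\sfv)$-law; the paper's proof does not do this---it simply applies the total-probability formula over the partition by $(\sfu,\sfv)$, and the resulting mixture is all that is needed for the application in Section~\ref{sec:Proof}, which proceeds uniformly in $(\sfu_N,\sfv_N)$.
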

Consequently, one is entitled to explore \(\gamma\) on the \((N^{\frac23},N^{\frac13})\)-scale using the effective-random-walk measure \(\Pbetapl{\xi_L^N,\xi_R^N}\).
This is precisely what we shall do in Section~\ref{sec:Proof}.

\begin{proof}[Proof of  Theorem~\ref{thm:coupling}]
The claim of Theorem~\ref{thm:coupling} is a straightforward consequence of Theorem~\ref{thm:ERW-om}, the bulk-relaxation property~\eqref{eq:pnb-pb} of the finite-volume weights \(\{\sfp_N\}\) and of the approximate domain-decoupling property of \(\psi_N\) as formulated in~\eqref{eq:psi-N-sum}.

Indeed, let \(\uomega \in \uOmega_N^{\sf reg}(c_+,d_+,\kappa,\iota,\epsilon)\).
By~\eqref{eq:W-bound-cp}, we may restrict attention to \(\max_i \frc(\omega_i) \leq C_\beta\log N\).

By properties~\eqref{eq:C2} and~\eqref{eq:C3}, we can choose the leftmost vertex \(\sfu\) of \(\sfS(\uomega)\) inside \(\calB_{N^{\iota},2h_N + C_\beta\log N d_N}(\ell_N)\) and the right-most vertex \(\sfv\) inside \(\calB_{N^{\iota},2h_N + C_\beta\log N d_N}(r_N)\). Indeed, there is at least one cone-point in the box \(\calB_{N^{\iota},2h_N}(\ell_N)\), so the leftmost vertex \(\sfu\) of \(\sfS(\uomega)\) is at most at the height $2h_N + C_\beta\log N d_N$, which is of the order of $h_N \log N$.
Let \(\uomega^\prime = \uomega \cap \calD(\sfu,\sfv) \in \SetRootDiaCont(\sfu,\sfv)\) be the portion of \(\uomega\) between \(\sfu\) and \(\sfv\).
Let us write \(\uomega = \uomega_L \circ \uomega^\prime  \circ \uomega_R\).

\smallskip
By property~\eqref{eq:C1}, \(\uomega^\prime \) stays above the rectangular box \(\calB_{N^\kappa,N^\epsilon}\) and, consequently,
by the bulk relaxation property~\eqref{eq:pnb-pb}, we can write \(\Wbeta(\uomega^\prime )\) instead of \(\WNbeta (\uomega^\prime )\).

\smallskip
By construction,
\begin{equation}\label{eq:Box-sum}
	\abs{\BOX_N^-[\uomega ]} = \sfA(\uomega_L) + \sfA(\uomega^\prime ) + \sfA(\uomega_R)
	\text{ and, moreover, }
	\abs{\uomega^\prime } = \sfo\bigl(N^\kappa(\log N)^2\bigr),
\end{equation}
where the latter bound is uniform in all choices of \(\sfu\) and \(\sfv\) in question, since the probability of finding an irreducible piece of diameter larger than \((\log N)^2\)
vanishes with \(N\).

Substituting~\eqref{eq:psi-N-sum} and~\eqref{eq:Box-sum} into~\eqref{eq:PNb-PNbl}, we can \emph{factor out}, up to a uniform factor of order \((1+\smo{1})\),  the terms
\[
	\Wbeta(\uomega') \exp\Bigl\{ -\frac{2\lambda m^*_\beta}{N} \sfA(\uomega') \Bigr\}
\]
in the expression for \(\PNbetal (\uomega_L \circ \uomega' \circ \uomega_R \given	\uOmega_N^{\sf reg})\).

The rest follows from~\eqref{eq:ERW-2} and an application of the total probability formula for  \(\PNbetal(\cdot \given \uOmega_N^{\sf reg})\) with respect to the partition induced by the choice of the vertices \(\sfu\) and \(\sfv\) as above.
\end{proof}

\section{Proof of the main result}
\label{sec:Proof}

At this stage, the proof boils down to a derivation of a uniform invariance principle for the diffusive rescaling of the family \(\{\Pbetapl{\xi_L^N,\xi_R^N}\}\) appearing in the statement of Theorem~\ref{thm:coupling}.

From now on, the parameters \(\kappa\) and \(\iota\) introduced in~\eqref{eq:kappa-scales-g} are chosen in  the following way:
\begin{equation}\label{eq:kappa-choice}
	\kappa \defby \frac{2}{3} + 2\delta \quad \text{and}\quad \iota \defby \frac{2}{3} + \delta ,
\end{equation}
where \(\delta>0\) is chosen small enough to ensure that the following two technical conditions are satisfied:
\begin{equation}\label{eq:d-fix}
	3\delta < \epsilon \quad\text{and}\quad 6\delta <\frac13 .
\end{equation}
As will be explained below, \eqref{eq:d-fix} helps to secure a simple transition from the polymer chains $\uomega$ to the trajectories of the effective random walk $\underline{\sfS}$.
\smallskip

\noindent
By Theorems~\ref{thm:coupling} and~\ref{thm:ERW-g}, it suffices to prove that both
\begin{enumerate}[label=\textsf{\alph*}.]
	\item convergence of finite-dimensional distributions and
	\item tightness on fixed intervals $[-T, T]$
\end{enumerate}
hold \emph{uniformly} in all sequences of measures \(\{\Pbetapl{\sfu_N,\sfv_N}\}\) with (note that we redefine $\ell_N, r_N$ as compared to~\eqref{eq:dlrh-N-g})
\begin{equation}\label{eq:uv-param}
	\begin{split}
		\sfu_N &\defby (\ell_N, u_N) \in \calB_{N^{\iota},h_N \log N}(-N^\kappa + N^\iota) \setminus \calB_{N^{\iota}, N^\epsilon}(-N^\kappa + N^\iota) ,\\
		\sfv_N &\defby (r_N, v_N) \in \calB_{N^{\iota},h_N \log N}(N^\kappa - N^\iota) \setminus \calB_{N^{\iota}, N^\epsilon}(N^\kappa - N^\iota) .
	\end{split}
\end{equation}

\subsection{Transition from the polymer chains $\uomega$ to the effective random walks (ERW) $\underline{\sfS}$}
\label{sub:PC-RW}

Recall that $\Pbetapl{\sfu,\sfv}$ is a probability distribution on \(\uOmega^{\sfu,\sfv}\), the set of all concatenations of paths \(\eta \in \SetRootDiaCont(\sfu,\sfv)\).
We define the length of $\uomega\sim\eta$ by
\[
\abs{\uomega} \defby \abs{\eta} .
\]
Following Definition~\ref{def:S-walk}, we associate with each polymer chain $\uomega\in\uOmega^{\sfu,\sfv}$ an ERW
\[
\underline{\sfS} = \{ \sfu = \sfS_0, \sfS_1 , \dots , \sfS_\ell = \sfv \}
\]
from $\sfu$ to $\sfv$.
It will be convenient to record the vertices of \(\underline{\sfS}\) as $\sfS_i \defby (\sfT_i, \sfZ_i)$ and the corresponding increments as
\[
\sfX_i = (\theta_i, \zeta_i) \defby \sfS_i - \sfS_{i-1} .
\]
Observe that, by Remark~\ref{rem:ZeroExp}, \(\sfE_\beta(\zeta_i)=0\), where \(\sfE_\beta\) denotes expectation with respect to the measure \(\sfp_\beta\).

\smallskip
In the sequel, the size of the maximal step will be denoted by
\begin{equation}\label{eq:gap-size}
	\gap(\underline{\sfS}) \defby \max_i \normII{\sfX_i} .
\end{equation}
With an abuse of notation, we continue to use $\uOmega^{\sfu,\sfv}$ for the collection of such walks and
$\Pbetapl{\sfu,\sfv}$ for the induced distribution.
Note that the number $\ell$ of steps is not fixed under $\Pbetapl{\sfu,\sfv}$, that is, we are in the realm of  two-dimensional renewals.

There is a convenient definition of the area: For  $\underline{\sfS} = \{ \sfS_0, \sfS_1, \dots, \sfS_\ell \}$, we set
\[
\sfA(\underline{\sfS}) \defby \sum_{i=1}^\ell \theta_i \sfZ_{i-1}.
\]

The choice of parameters in~\eqref{eq:kappa-choice} an~\eqref{eq:d-fix} implies the following controls.
\begin{lemma}\label{lem:controls}
	Let \(\delta>0\) and \(\eta>3\delta\) be small enough to ensure that \(2/3 + 3\delta + \eta < 1\) (which is an eligible choice by the second condition in~\eqref{eq:d-fix}.).
	The following statements hold for all $N$ large enough with $\Pbetapl{\sfu_N,\sfv_N}$-probability tending to one uniformly in sequences $\{\sfu_N, \sfv_N\}$ satisfying~\eqref{eq:uv-param}.
	\begin{description}
		\item[\sl 1. Control of the area]
		There exists \(C_1\) such that
		\begin{equation}\label{eq:A-control}
			\frac{\nabs{\BOX_N^-[\uomega]}}{N} \leq C_1 N^{3\delta}
		\end{equation}
		\item[\sl 2. Control of maximal gaps]
		\begin{equation}\label{G-control}
			\gap(\underline{\sfS}) \leq N^{\eta} .
		\end{equation}
		\item[\sl 3. Control of the length $\abs{\uomega}$]
		There exists \(C_2\) such that
		\begin{equation}\label{L-control}
			\abs{\uomega} \leq C_2 N^{2/3 + 2\delta}.
		\end{equation}
		\item[\sl 4. Control of mismatched area between {$\BOX_N^-[\uomega]$} and $\sfA(\underline{\sfS})$]
		\begin{equation}\label{eq:MA-control}
			\frac1N \babs{\nabs{\BOX_N^-[\uomega]} - \sfA(\underline{\sfS})} \leq N^{2/3 + 3 \delta + \eta - 1} = \smo{1}.
		\end{equation}
	\end{description}
\end{lemma}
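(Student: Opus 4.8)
The plan is to transfer all four estimates to the reference measures $\Pbetap{\sfu_N,\sfv_N}$ and (where convenient) $\Pbeta{\sfu_N,\sfv_N}$, controlling the removal of the area-tilt through a uniform lower bound on the normalisation
\[
	\calN_N \defby \Ebetap{\sfu_N,\sfv_N}\Bigl[ \exp\Bigl\{ -\tfrac{2\lambda m^*_\beta}{N}\sfA(\uomega) \Bigr\} \Bigr] .
\]
The only delicate point is the following claim: there is $C_0 = C_0(\beta,\lambda) < \infty$ such that, for all $N$ large and all $(\sfu_N,\sfv_N)$ as in~\eqref{eq:uv-param},
\[
	\calN_N \geq \mathrm{e}^{-C_0 N^{2\delta}(\log N)^2} .
\]
This is the analogue for the $\sfp_\beta$-renewal chains of the lower bound~\eqref{eq:g-lb}, and is proven by the same tube construction (compare Lemma~\ref{lem:tube}, Lemma~\ref{lem:UB-Area-g}, and the invariance principle of~\cite{greenberg2005invariance}): one restricts the expectation to those $\uomega$ whose associated ERW $\underline{\sfS}$ leaves $\sfu_N$, drifts diffusively down to the level of order $N^{1/3}$, travels along it, and climbs back to $\sfv_N$, all within a tube of vertical width of order $N^{1/3}$, hence inside $\bbH_+^*$. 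Inside such a tube $\sfA(\uomega) = \bgo{N^{1+2\delta}\log N}$ (the two sloped end-pieces, whose height may reach $h_N\log N$, dominate), so the area-tilt costs at most $\mathrm{e}^{-\bgo{N^{2\delta}\log N}}$; and, since $\kappa - \tfrac23 = 2\delta$ by~\eqref{eq:kappa-choice}, the $\Pbeta{\sfu_N,\sfv_N}$-probability of this confinement is at least $\mathrm{e}^{-\bgo{N^{2\delta}}}$ on the long central part and $N^{-\bgo{1}}$ on the two end-pieces, whose horizontal span, of order $N^{2/3}(\log N)^2 \ll N^\kappa$, dwarfs the square of their height. As the tube event lies in $\bbH_+^*$, the same lower bound persists under $\Pbetap{\sfu_N,\sfv_N}$, which gives the displayed bound on $\calN_N$.

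Granting this, for any event $E$ one has $\Pbetapl{\sfu_N,\sfv_N}(E) = \calN_N^{-1}\,\Ebetap{\sfu_N,\sfv_N}\bigl[\mathrm{e}^{-\frac{2\lambda m^*_\beta}{N}\sfA(\uomega)}, E\bigr]$ and, moreover, $\Pbetap{\sfu_N,\sfv_N}(E) \leq N^{\bgo{1}}\,\Pbeta{\sfu_N,\sfv_N}(E)$, because $\Pbeta{\sfu_N,\sfv_N}(\uomega \subset \bbH_+^*) \geq N^{-\bgo{1}}$ by a standard ballot-type lower bound for mean-zero random-walk bridges with endpoint heights $\geq N^\epsilon$ and horizontal span of order $N^\kappa$. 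For item~1 we keep the tilt factor: for $E = \{\sfA(\uomega) > C N^{1+3\delta}\}$ one has $\Ebetap{\sfu_N,\sfv_N}[\mathrm{e}^{-\frac{2\lambda m^*_\beta}{N}\sfA(\uomega)}, E] \leq \mathrm{e}^{-2\lambda m^*_\beta C N^{3\delta}}$, hence $\Pbetapl{\sfu_N,\sfv_N}(\sfA(\uomega) > C N^{1+3\delta}) \leq \mathrm{e}^{C_0 N^{2\delta}(\log N)^2 - 2\lambda m^*_\beta C N^{3\delta}} \to 0$ for every $C > 0$; item~1 then follows upon replacing $\sfA(\uomega)$ by $\nabs{\BOX_N^-[\uomega]}$, which differs from it by the $\smo{N^{1+3\delta}}$ error controlled in item~4. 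For items~2 and~3 we discard the tilt and reduce to $\Pbeta{\sfu_N,\sfv_N}$: since each increment is nonzero and lies in the forward cone $\fcone$, $\theta_i \geq 1$, so the number of renewal steps is deterministically $\ell \leq r_N - \ell_N = \bgo{N^\kappa}$; the exponential tails of $\sfp_\beta$ (which follow from~\eqref{eq:W-bound-cp}, \eqref{eq:Prop4}, and the mass-gap bound) together with a union bound give $\Pbeta{\sfu_N,\sfv_N}(\gap(\underline{\sfS}) > N^\eta) \leq \bgo{N^\kappa}\mathrm{e}^{-c N^\eta}$, while the usual skeleton/Ornstein--Zernike large-deviation estimate for $\abs{\uomega} = \sum_i \abs{\omega_i}$ (the ERW analogue of~\eqref{eq:Skel-UB}) gives $\Pbeta{\sfu_N,\sfv_N}(\abs{\uomega} > C_2 N^{2/3+2\delta}) \leq \mathrm{e}^{-c N^\kappa}$ for $C_2$ large. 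Multiplying by $N^{\bgo{1}}\calN_N^{-1} \leq \mathrm{e}^{\bgo{N^{2\delta}(\log N)^2}}$, both bounds still tend to $0$, because $\eta > 3\delta > 2\delta$ and $\kappa > 2\delta$.

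Finally, within each diamond $\calD(\sfS_{i-1},\sfS_i)$ the interface stays confined, so the lattice area it delimits over $\calS_{\sfS_{i-1},\sfS_i}$ differs from the trapezoidal area $\theta_i \sfZ_{i-1} + \tfrac12\theta_i\zeta_i$ by at most $\bgo{\theta_i\,\gap(\underline{\sfS})}$, with an additional $\bgo{1}$ per step from the lattice-to-continuum adjustment; summing over $i$, using $\sum_i \theta_i = r_N - \ell_N = \bgo{N^\kappa}$ and the controls~\eqref{G-control} and~\eqref{L-control} already obtained, gives $\babs{\,\nabs{\BOX_N^-[\uomega]} - \sfA(\underline{\sfS})\,} \leq \bgo{\gap(\underline{\sfS})N^\kappa + \abs{\uomega}} = \bgo{N^{\eta+2/3+2\delta}} \leq N^{\eta+2/3+3\delta}$ for $N$ large, which after division by $N$ is~\eqref{eq:MA-control}; it is $\smo{1}$ precisely because $\tfrac23 + 3\delta + \eta < 1$, and this same mismatch bound (being $\smo{N^{1+3\delta}}$) closes the reduction of item~1 to~\eqref{eq:A-control}. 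The main obstacle throughout is the uniform lower bound on $\calN_N$: the tube has to be built so that its confinement cost remains $\mathrm{e}^{-\smo{N^{3\delta}}}$ despite endpoints as high as $h_N\log N$, and it is precisely in the interplay between the area-tilt gain of order $N^{3\delta}$, the tube cost of order $N^{2\delta}$ (up to logarithms) and the polynomial losses from the positivity constraint that the specific choices $\kappa = \tfrac23 + 2\delta$, $\iota = \tfrac23 + \delta$ of~\eqref{eq:kappa-choice} and the conditions~\eqref{eq:d-fix} are used.
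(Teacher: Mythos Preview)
Your proof is correct and follows essentially the same route as the paper's: the tube-based lower bound on the normalisation \(\calN_N\) (the paper's analogue of~\eqref{eq:LowerBndCReg3}), the direct use of the tilt factor for item~1, the removal of the tilt and positivity constraint at sub-exponential cost for items~2 and~3, and the deterministic diamond bound for item~4. The only organisational difference is that you establish the bound \(\calN_N \geq \mathrm{e}^{-\smo{N^{3\delta}}}\) once upfront and use it throughout, whereas the paper first proves item~1 and then bootstraps: once the area is known to be \(\leq C_1 N^{1+3\delta}\) with high probability, the tilt factor on that event is at least \(\mathrm{e}^{-\bgo{N^{3\delta}}}\), which immediately gives \(\calN_N \geq \mathrm{e}^{-\bgo{N^{3\delta}}}\) for use in items~2 and~3. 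Your direct tube bound is actually slightly sharper (order \(N^{2\delta}\) up to logs rather than \(N^{3\delta}\)), and you are more careful than the paper about the polynomial cost of removing the positivity constraint when passing from \(\Pbetap{\sfu_N,\sfv_N}\) to \(\Pbeta{\sfu_N,\sfv_N}\). One minor inaccuracy: in your tube the central piece at height \(\sim N^{1/3}\) over span \(\sim N^\kappa\) has area \(\sim N^{1+2\delta}\), which dominates the end-pieces rather than the other way around; but your stated upper bound \(\bgo{N^{1+2\delta}\log N}\) is unaffected.
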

\begin{proof}
	\noindent\textsl{1. Control of the area.}
	First observe that
	\[
	\Pbetapl{\sfu_N,\sfv_N}\bigl(\nabs{\BOX_N^-[\uomega]} > C_1N^{1+3\delta}\bigr)
	\leq
	\frac{\exp\Bigl\{ -2\lambda m^*_\beta C_1 N^{3\delta} \Bigr\}}{\Ebetap{\sfu_N,\sfv_N}\Bigl[\exp\Bigl\{ -\frac{2\lambda m^*_\beta}{N}\nabs{\BOX_N^-[\uomega]}\Bigr\}\Bigr]} ,
	\]
	so we only need a lower bound on the denominator. This can be achieved using an argument completely similar to the one in the proof of~\eqref{eq:LowerBndCReg3}, so we only sketch it. We consider the set of polymer chains whose associated ERW trajectories \(\underline{\sfS}\) visit the two vertices
	\[
	\bigl(\ell_N+(h_N\log N)^2,h_N\bigr)
	\text{ and }
	\bigl(r_N-(h_N\log N)^2,h_N\bigr)
	\]
	and stay inside tubes of width \(h_N\), as we did in the lower bound~\eqref{eq:LowerBndCReg3}. Each such polymer chain \(\uomega\) satisfies \(\nabs{\BOX_N^-[\uomega]} \leq 3N^{1+2\delta}\). The proof thus reduces to proving that the \(\Pbetap{\sfu_N,\sfv_N}\)-probability of this set of polymer chains is bounded below by \(\exp\{-\sfO(N^{2\delta})\}\), which can be done by proceeding analogously to the proof of Lemma~\ref{lem:tube}.

	\noindent\textsl{2. Control of maximal gaps.}
	It follows from the first point that
	\begin{align*}
		\Pbetapl{\sfu_N,\sfv_N} \bigl( \gap(\underline{\sfS}) > N^{\eta} \bigr)
		&\leq
		e^{\sfO(N^{3\delta})}\, \Pbetap{\sfu_N,\sfv_N} \bigl( \gap(\underline{\sfS}) > N^{\eta} \bigr) \\
		&\leq
		e^{\sfO(N^{3\delta})}\, \Pbeta{\sfu_N,\sfv_N} \bigl( \gap(\underline{\sfS}) > N^{\eta} \bigr) \\
		&\leq
		e^{\sfO(N^{3\delta})}\, \sfp_\beta \bigl( \normII{X_1} > N^{\eta} \bigr)
		\leq
		e^{\sfO(N^{3\delta}) - \sfO(N^{\eta})},
	\end{align*}
	which tends to \(0\) since \(\eta>3\delta\).

	\noindent\textsl{3. Control of the length $\abs{\uomega}$.}
	Once more, it follows from the first point that
	\begin{align*}
		\Pbetapl{\sfu_N,\sfv_N} \bigl( \nabs{\uomega} > C_2 N^{2/3+2\delta}\bigr)
		&\leq
		e^{\sfO(N^{3\delta})}\, \Pbeta{\sfu_N,\sfv_N} \bigl( \nabs{\uomega} > C_2 N^{2/3+2\delta}\bigr),
	\end{align*}
	and the result now follows from standard estimates for bulk connectivities (for instance through skeleton calculus).

	\noindent\textsl{4. Control of mismatched area between {$\BOX_N^-[\uomega]$} and $\sfA(\underline{\sfS})$.}
	Note that the contribution to the mismatched area from any given increment of \(\underline{\sfS}\) is smaller than \(\theta_i^2 \leq \theta_i N^\eta\) by the second point above (remember that the corresponding path is constrained to lie inside a diamond). Therefore,
	\[
	\babs{\nabs{\BOX_N^-[\uomega]} - \sfA(\underline{\sfS})}
	\leq
	N^\eta \sum_{i=1}^\ell \theta_i
	\leq
	N^\eta (2 N^{2/3+2\delta}+1)
	\leq
	N^{2/3 + 3 \delta + \eta} .\qedhere
	\]
\end{proof}

\subsection{FS scaling of the interfaces}

Let us define the quantity
\begin{equation}\label{eq:chi-fun}
	\chi_\beta \defby \frac{\sfE_\beta(\zeta^2)}{\sfE_\beta(\theta)} = \frac{\Var_\beta(\zeta)}{\sfE_\beta(\theta)},
\end{equation}
which is precisely the curvature of the Wulff shape $\mathbf{K}_\beta$ at the point \(\taub\sfe_1\)
(see Appendix~\ref{sec:Curvature}).

Let $\sfu,\sfv\in \bbH_+$ and let
\[
\underline{\sfS} = \{ \sfS_0, \sfS_1, \dots, \sfS_n \} \in \uOmega^{\sfu,\sfv} .
\]
For $N\in\bbN$, let us define the linear interpolation (LI) through the rescaled vertices of $\underline{\sfS}$:
\begin{equation}\label{eq:IN}
	\fri_N[\underline{\sfS}] \defby \text{LI}\Bigl\{\Bigl(\frac{\sfT_0}{N^{2/3}},\frac{\sfZ_0}{N^{1/3}\sqrt{\chi_\beta}} \Bigr), \dots, \Bigl(\frac{\sfT_n}{N^{2/3}},\frac{\sfZ_n}{N^{1/3}\sqrt{\chi_\beta}}\Bigr)\Bigr\} .
\end{equation}
If $\sfu = (s N^{2/3}, r\sqrt{\chi_\beta}N^{1/3})$ and $\sfv = (t N^{2/3}, y\sqrt{\chi_\beta}N^{1/3})$,
we shall think of $\fri_N[\underline{\sfS}]$ as a random non-negative function on the rescaled \emph{time} interval $[s,t]$ and, with a slight abuse of notation, write $\fri_N(\tau)$, $\tau\in [s,t]$.

\subsection{ERW partition functions}
Let us define the following set of partition functions and probability measures related to the effective random walks $\underline{\sfS}$.
We shall use the same symbols $\sfP$ and $\sfE$ for probability measures and the corresponding expectations, but shall employ the symbol $\calG$ for partition functions in order to avoid confusion with the symbol $\calZ$ used for partition functions phrased in terms of interfaces/polymer chains and their weights.
\begin{description}
	\item[\textit{$n$-step partition functions {$\calG_{\beta,\lambda/N,+}^{\sfu;n}[f]$.}}]
	Given a function $f$ on $\bbN$ and a vertex $\sfu\in \bbH_+$, we define
	\begin{align}
		\calG_{\beta,\lambda/N,+}^{n}[f](\sfu)
		&=
		\calG_{\beta,\lambda/N,+}^{\sfu;n}[f] \notag\\
		&\defby
		\sfE^{\sfu}_\beta \Bigl[
		\exp\Bigl( -\frac{2\lambda m^*_\beta}{N} \sum_{i=1}^{n} \sfZ_{i-1}\theta_{i} \Bigr) f(\sfZ_n) \,,\,
		\underline{\sfS}[0,n]\subset \bbH_+
		\Bigr] \\
		\label{eq:G-nstep}
		&=
		\sfE^{\sfu}_\beta \Bigl[
		\exp\Bigl( -\frac{2\lambda m^*_\beta}{N}\, \sfA(\underline{\sfS}[0,n]) \Bigr) f(\sfZ_n) \,,\,
		\underline{\sfS}[0,n]\subset \bbH_+
		\Bigr] ,
	\end{align}
	where \(\sfE^{\sfu}_\beta\) denotes expectation with respect to the ERW \(\underline{\sfS}\) with increments of law \(\sfp_\beta\) starting at \(\sfu\) and \(\underline{\sfS}[0,n]\) denotes the first \(n\) steps of the trajectory of \(\underline{\sfS}\).
	\smallskip
	\item[\textit{$n$-step partition functions $\calG_{\beta,\lambda/N,+}^{\sfu,\sfv;n}$}.]
	Given $\sfu , \sfv\in \bbH_+$, define
	\begin{align*}
		\calG_{\beta,\lambda/N,+}^{\sfu,\sfv;n}
		&\defby
		\sfE^{\sfu}_\beta \Bigl[
		\exp\Bigl( -\frac{2\lambda m^*_\beta}{N} \sum_{i=1}^{n} \sfZ_{i-1}\theta_{i} \Bigr) \,,\, \sfS_n = \sfv \,,\, \underline{\sfS}[0,n]\subset \bbH_+
		\Bigr] \\
		\label{eq:G-nstep-v}
		&=
		\sfE^{\sfu}_\beta \Bigl[
		\exp\Bigl( -\frac{2\lambda m^*_\beta}{N}\, \sfA(\underline{\sfS}[0,n]) \Bigr) \,,\, \sfS_n = \sfv \,,\, \underline{\sfS}[0,n]\subset \bbH_+
		\Bigr] .
	\end{align*}
\end{description}
Of course,
\begin{equation}\label{eq:G-pf-rel}
	\calG_{\beta,\lambda/N,+}^{\sfu;n}[f] = \sum_{\sfv=(v_1,v_2)} \calG_{\beta,\lambda/N,+}^{\sfu,\sfv;n} f(v_2) .
\end{equation}
For \(1\leq n_1<n_2<\cdots<n_m < n\), we will also use the notation
\begin{multline}
\label{eq:def_part_functions_fin_moments}
	\calG_{\beta,\lambda/N,+}^{\sfu,\sfv;n} \bigl(\prod_{i=1}^m f_{n_i} \bigr)
	\defby \\\sum_{\sfv^1, \dots, \sfv^{m}} \calG_{\beta,\lambda/N,+}^{\sfu,\sfv^1;l_1} f_{n_1}(v^1_2) \calG_{\beta,\lambda/N,+}^{\sfv^1,\sfv^2;l_2} f_{n_2}(v^2_2) \cdots \calG_{\beta,\lambda/N,+}^{\sfv_{m-1},\sfv_m;l_m} f_{n_m}(v^m_2) \calG_{\beta,\lambda/N,+}^{\sfv_{m},\sfv;l_{m+1}} ,
\end{multline}
where \(l_i=n_i-n_{i-1}\), \(n_0=0, n_{m+1} = n\).

\subsection{Mixing}
\label{sub:mixing}

Fix $T >0$. Recall our notation~\eqref{eq:boxes-strips} for lattice semi-strips.
Let $\sfu,\sfu^\prime,\sfv$ and $\sfv^\prime$ be such that
\begin{equation}\label{eq:sp-uv}
	\sfu, \sfu^\prime \in \calS_{-\infty,-TN^{2/3}}
	\quad \text{and} \quad
	\sfv, \sfv^\prime \in \calS_{TN^{2/3},\infty} .
\end{equation}
Let $\underline{\sfS} = \{ \sfs_0, \dots, \sfs_n \} \in \uOmega^{\sfu,\sfv}$ and $\underline{\sfS}^\prime = \{ \sfs_0^\prime, \dots, \sfs_m^\prime \} \in \uOmega^{\sfu^\prime,\sfv^\prime}$.
We shall say that $\underline{\sfS}\stackrel{T}{\sim} \underline{\sfS}^\prime$ if there exist $\ell,\ell^\prime$ and $k$ such that
\begin{equation}\label{eq:Tsim}
	\sfs^{\vphantom{\prime}}_\ell, \sfs^\prime_{\ell^\prime} \in \calS_{-\infty,-TN^{2/3}},\;
	\sfs^{\vphantom{\prime}}_{\ell+k}, \sfs^\prime_{\ell^\prime+k} \in \calS_{TN^{2/3},\infty}
	\text{ and }
	\sfs^{\vphantom{\prime}}_{\ell+j} = \sfs^\prime_{\ell^\prime+j} \text{ for all } j=0, \dots, k.
\end{equation}
By an adjustment of the techniques developed in~\cite{Ioffe+Shlosman+Velenik-15,Ioffe-Velenik-Wachtel-18}, we obtain the following

\begin{proposition}\label{prob:coupl-simT}
	Fix $0 < c < C < \infty$.
	Then, there exist \(K_0 = K_0(\beta,\lambda)\) and $\nu = \nu (\beta,\lambda) > 0$ such that the following holds:  for all $N$ large enough, there exists a coupling $\Phi$ between $\Pbetapl{\sfu,\sfv}$ and $\Pbetapl{\sfu^\prime,\sfv^\prime}$ such that, as soon as $N$ is sufficiently large,
	\begin{equation}\label{eq:coupl-simT}
		\Phi\bigl( \underline{\sfS}\stackrel{T}{\sim} \underline{\sfS}^\prime \bigr)
		\geq
		1 - \mathrm{e}^{-\nu K} ,
	\end{equation}
	for all $K > K_0$, $T > 0$ and all $\sfu,\sfu^\prime,\sfv,\sfv^\prime$ such that
	\begin{equation}\label{eq:sp-uv-K}
		\sfu, \sfu^\prime \in \calS_{-\infty,-(T+K)N^{2/3}}
		\quad \text{and} \quad
		\sfv, \sfv^\prime \in \calS_{(T+K)N^{2/3},\infty}
	\end{equation}
	and, in addition,
	\begin{equation}\label{eq:uv-heights}
		\sfu\cdot\sfe_2, \sfu^\prime\cdot\sfe_2, \sfv\cdot\sfe_2, \sfv^\prime\cdot\sfe_2 \in (cN^{1/3},CN^{1/3}) .
	\end{equation}
\end{proposition}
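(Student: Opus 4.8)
The plan is to adapt the two-sided coupling construction of~\cite{Ioffe+Shlosman+Velenik-15,Ioffe-Velenik-Wachtel-18} (see also Appendix~C of~\cite{Ioffe+Velenik-MPRF-2018}) to the present two-dimensional renewal setting. The starting point is the renewal structure furnished by Theorem~\ref{thm:fact}: under \(\Pbeta{\sfu,\sfv}\) the effective walk \(\underline{\sfS}\) has i.i.d.\ increments of law \(\sfp_\beta\) conditioned on the total displacement \(\sfv-\sfu\), and \(\Pbetapl{\sfu,\sfv}\) is obtained from it by the positivity conditioning \(\{\underline{\sfS}\subset\bbH_+\}\) and the area tilt \(\exp\{-\tfrac{2\lambda m^*_\beta}{N}\sfA(\underline{\sfS})\}\). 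The single crucial quantitative fact is that, on any horizontal window of width of order \(N^{2/3}\) throughout which the walk stays at height of order \(N^{1/3}\), the accumulated area is of order \(N\), so that the area-tilt factor restricted to that window lies between two positive constants depending only on \(\beta\), \(\lambda\) and the scaled window dimensions, \emph{but not on \(K\)}; there the area tilt and the positivity constraint are both bounded perturbations of the plain \(\sfp_\beta\)-random walk. The assumption~\eqref{eq:uv-heights} that the endpoint heights are of order \(N^{1/3}\), together with the gap, height and length controls of the type established in Lemma~\ref{lem:controls} (which extend, by the same proofs, to \(\Pbetapl{\sfu,\sfv}\) for endpoints as in~\eqref{eq:sp-uv-K}--\eqref{eq:uv-heights}), guarantees with probability \(1-\mathrm{o}(1)\) that the relevant portions of both walks stay at height comparable to \(N^{1/3}\) and have no step larger than \(N^{\eta}\).

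First I would produce common renewal points in the two buffers. Partition the left buffer \(\calS_{-(T+K)N^{2/3},-TN^{2/3}}\) into \(\lfloor K/L_0\rfloor\) consecutive vertical sub-strips of width \(L_0N^{2/3}\), with \(L_0=L_0(\beta)\) a large constant fixed once and for all. In each sub-strip, conditionally on the entrance/exit states of \(\underline{\sfS}\) and \(\underline{\sfS}'\), the laws of the two pieces are — up to Radon--Nikodym factors bounded above and below in terms of \(L_0\) only — those of free \(\sfp_\beta\)-random walks constrained to stay at height \(\gtrsim N^{1/3}\); the gap bound \(\gap(\underline{\sfS})\le N^{\eta}\), \(\eta<2/3\), ensures each sub-strip contains \(\gtrsim N^{2/3-\eta}\to\infty\) renewal steps and that no step straddles a sub-strip. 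A standard increment-matching argument (as in~\cite{Ioffe-Velenik-Wachtel-18}) then shows that, with probability at least some \(p_0=p_0(\beta,\lambda)>0\) independent of \(K\), one can couple the two walks so that they pass through a common vertex \(\sfw\) inside that sub-strip which is a renewal point (an \(\underline{\sfS}\)-vertex) of both. Running this attempt independently in the successive sub-strips, conditioning on failure in the previous ones, the probability that all \(\lfloor K/L_0\rfloor\) attempts fail is at most \((1-p_0)^{\lfloor K/L_0\rfloor}\le\mathrm{e}^{-\nu_1 K}\). Symmetrically, the same construction in the right buffer \(\calS_{TN^{2/3},(T+K)N^{2/3}}\) yields, with probability at least \(1-\mathrm{e}^{-\nu_2 K}\), a common renewal point \(\sfw'\) of both walks there.

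Given the common renewal points \(\sfw\) (horizontal coordinate \(\le-TN^{2/3}\)) and \(\sfw'\) (horizontal coordinate \(\ge TN^{2/3}\)), I would couple the two walks so that they follow the \emph{identical} trajectory between \(\sfw\) and \(\sfw'\), their pieces before \(\sfw\) (from \(\sfu\), resp.\ \(\sfu'\), to \(\sfw\)) and after \(\sfw'\) (from \(\sfw'\) to \(\sfv\), resp.\ \(\sfv'\)) being coupled arbitrarily. This is consistent with the two target measures because of the renewal/Gibbs decoupling: conditionally on both walks visiting \(\sfw\) and \(\sfw'\) as renewal points, the three pieces are independent, the area splits as \(\sfA(\underline{\sfS})=\sfA(\mathrm{left})+\sfA(\mathrm{middle},\text{ relative to }\sfw)+\sfA(\mathrm{right},\text{ relative to }\sfw')+(\text{cross terms})\) with the cross terms determined by the heights of \(\sfw,\sfw'\) and the horizontal extents, so the middle piece of \(\underline{\sfS}\) is distributed (after the shift by \(\sfw\)) exactly as \(\Pbetapl{\sfw,\sfw'}\), and likewise for \(\underline{\sfS}'\); these being the same measure, they can be coupled to coincide. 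On the event that both common renewal points were produced and the height/gap controls hold, the resulting pair satisfies \(\underline{\sfS}\stackrel{T}{\sim}\underline{\sfS}'\) in the sense of~\eqref{eq:Tsim} (with \(\sfs_\ell=\sfs'_{\ell'}=\sfw\) and \(\sfs_{\ell+k}=\sfs'_{\ell'+k}=\sfw'\)). Taking \(\nu\defby\tfrac12\min\{\nu_1,\nu_2\}\) and \(K_0\) large, a union bound gives \(\Phi(\underline{\sfS}\stackrel{T}{\sim}\underline{\sfS}')\ge1-\mathrm{e}^{-\nu K}\) for all \(K>K_0\), uniformly in \(T>0\) and in all admissible \(\sfu,\sfu',\sfv,\sfv'\).

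The main obstacle will be the bookkeeping around the area tilt: although the tilt over a single width-\(N^{2/3}\) window is harmless, one must track the height-dependent cross terms when the trajectory is cut at \(\sfw,\sfw'\) and, more importantly, verify that the local-coupling success probability \(p_0\) can be chosen uniformly in the sub-strip index and \emph{uniformly in \(K\)}; this is precisely where the two-sided height bound \(cN^{1/3}\le(\text{height})\le CN^{1/3}\), propagated from the endpoints into the buffers, is needed, since without it the comparison constants between the tilted and the free walk would deteriorate with \(K\). A secondary, routine point is to upgrade the gap/height/length controls to the family of endpoints in~\eqref{eq:sp-uv-K}--\eqref{eq:uv-heights}, which follows from the arguments already used for Lemma~\ref{lem:controls} together with lower bounds on the relevant normalizations analogous to~\eqref{eq:LowerBndCReg3} and~\eqref{eq:g-lb}.
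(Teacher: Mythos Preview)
Your overall architecture---partition each buffer into \(\Theta(K)\) sub-strips of width \(\Theta(N^{2/3})\), show a uniformly positive meeting probability in each (or a positive fraction of them), and then couple the middle pieces once common renewal points are found on both sides---is the same as the paper's, which also adapts~\cite{Ioffe+Shlosman+Velenik-15}. The decoupling of the middle piece at common renewal points is correct (in fact there are no genuine cross terms in \(\sfA(\underline{\sfS})=\sum_i\theta_i\sfZ_{i-1}\) when cut at renewal vertices, so that part is even simpler than you suggest).

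The one place where your plan diverges from the paper, and where I think you have a real gap, is the height control inside the buffer. You propose to ``propagate the two-sided height bound \(cN^{1/3}\le(\text{height})\le CN^{1/3}\) from the endpoints into the buffers'' so that the Radon--Nikodym comparison with the free walk is uniform in the sub-strip index and in \(K\). But such a two-sided bound does \emph{not} propagate: the walk is only constrained to stay in \(\bbH_+\), and over a buffer of horizontal extent \(KN^{2/3}\) it can (and will) make excursions well above \(CN^{1/3}\); conversely there is no a~priori lower height bound of order \(N^{1/3}\). If you try to carry the constants \(c,C\) from~\eqref{eq:uv-heights} through \(\Theta(K)\) sub-strips, the comparison factors blow up with \(K\), killing the exponential decay.

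The paper resolves this differently and more robustly: it does \emph{not} propagate the endpoint heights. Instead, it uses the area tilt itself to manufacture the needed upper height control in situ. Since any interval \(I_k\) of width \(N^{2/3}\) throughout which both walks stay above \(\upsilon N^{1/3}\) contributes at least \(\upsilon N\) to the area, the total number of such ``bad'' intervals is bounded (with overwhelming probability) independently of \(K\); hence a positive fraction of the \(I_k\) are ``\(\upsilon\)-good'', meaning both walks enter and exit at height \(\le\upsilon N^{1/3}\) and, with uniformly positive conditional probability, stay below \(2\upsilon N^{1/3}\) throughout. On a \(\upsilon\)-good interval the area-tilt factor is bounded by a constant depending only on \(\upsilon\) and \(\lambda\), so the comparison with \(\lambda=0\) is uniform, and the second-moment meeting argument for the untilted walk gives a uniform \(p_0>0\). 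This is the mechanism that makes \(p_0\) independent of \(K\); your ``propagation from the endpoints'' would not achieve it.
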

\begin{proof}
The claim can be proved using a variant of the proof of the corresponding claim in~\cite[Proposition~5]{Ioffe+Shlosman+Velenik-15}.
Minor adaptations are needed because we deal here with a directed random walk in \(\bbZ^2\) rather than with the space-time path of a random walk in \(\bbZ\) as in the latter paper.
Since the changes are mostly straightforward, we only briefly sketch the argument for the sake of the reader.

The proof relies on a coupling argument. Let \(\underline{\sfS}\) and \(\underline{\sfS}'\) be independent processes distributed according to \(\Pbetapl{\sfu,\sfv}\) and \(\Pbetapl{\sfu',\sfv'}\), respectively.

We start by decomposing the interval \([T N^{2/3}, (T+K) N^{2/3}]\) into consecutive disjoint intervals \(I_1, I_2, \dots\) of length \(N^{2/3}\).
The first observation is that, in view of our target estimate, we can assume that there exists \(\epsilon\in(0,1/3)\) such that both trajectories have at least \(\epsilon N^{2/3}\) steps in each interval, with the left-most (resp.\ right-most) step occurring at a distance at most \(\epsilon N^{2/3}\) from the left end (resp.\ right end) of the interval. Indeed, as we did in the proof of Lemma~\ref{lem:controls}, at a cost at most \(e^{N^{3\delta}}\), we can set \(\lambda=0\) and ignore the positivity constraint, which reduces the estimate to a straightforward large deviations upper bound once \(\epsilon\) is small enough.

Fix \(\upsilon>0\) sufficiently large. Then, it is easy to prove that in the vast majority of the intervals \(I_k\), both \(\underline{\sfS}\) and \(\underline{\sfS}'\) visits points with height below \(\upsilon N^{1/3}\). Indeed, when this is not the case the total area \(\sfA\) has to be much larger than \(N\), which is easily seen to be very unlikely.

This implies that there is a positive fraction of the intervals \(I_{2k}\) such that the above occurs both in \(I_{2k-1}\) and in \(I_{2k+1}\). It is then easy to prove that, in such an interval \(I_{2k}\), there is a uniformly positive probability that both \(\underline{\sfS}\) and \(\underline{\sfS}'\) never reach a height above \(2\upsilon N^{1/3}\). Let us say that an interval where this occurs is \(\upsilon\)-good. We conclude that there is a positive fraction of \(\upsilon\)-good intervals. (See~\cite[Lemma~3]{Ioffe+Shlosman+Velenik-15} for the detailed argument.)

Finally, there exists \(p>0\), independent of $N$, such that, above each \(\upsilon\)-good interval, uniformly in what occurs outside of them, there is a probability at least \(p\) that the two trajectories \(\underline{\sfS}\) and \(\underline{\sfS}'\) meet. The proof of the corresponding statement when \(\lambda=0\) can be done using the second moment method (similarly as in~\cite[Appendix~A]{Ioffe+Shlosman+Velenik-15}). The extension to \(\lambda>0\) then follows from the fact that, in an \(\upsilon\)-good interval, the scaled area \(\frac1N\sfA\) is uniformly bounded above by a constant (depending on \(\upsilon\)); see~\cite[Proposition~6]{Ioffe+Shlosman+Velenik-15}.

It follows that, up to an event of probability exponentially small in \(K\), the two trajectories \(\underline{\sfS}\) and \(\underline{\sfS}'\) meet above \([T N^{2/3}, (T+K) N^{2/3}]\). Of course, the same is true over the interval \([-(T+K) N^{2/3}, -T N^{2/3}]\). When this happens, it is straightforward to couple the two trajectories over the interval \([-T N^{2/3}, T N^{2/3}]\) and the conclusion follows.
\end{proof}

\subsection{Scaling and Trotter--Kurtz formula}
In the sequel, we rely on~\cite[Section~1.6]{ethier2009markov} (specifically on Theorem~1.6.5 therein)  and we refer to~\cite[Section~1.2]{Ioffe+Shlosman+Velenik-15} for a concise description of the functional-analytic setup we employ.

By~\eqref{eq:IN}, it is natural to consider the following spatial rescaling:
\begin{equation}\label{eq:sp-scal}
	x\mapsto r \defby \frac{x}{N^{1/3}\sqrt{\chi_\beta}} .
\end{equation}
We also define the rescaling operator \(\scalingop f (x) = f\bigl(xN^{-1/3}\chi_\beta^{-1/2} \bigr)\).
Let $f$ be a smooth test function with compact support in $(0,\infty)$.
Let us consider the operator
\[
\transferOp_N f(r) \defby \sfE_\beta \Bigl[
\exp\Bigl( -\frac{2\lambda m^*_\beta}{N} \theta r N^{1/3}\sqrt{\chi_\beta} \Bigr)
f\Bigl(r + \frac{\zeta}{N^{1/3}\sqrt{\chi_\beta}}\Bigr) \bIF{r + \frac{\zeta}{N^{1/3}\sqrt{\chi_\beta}} \geq 0}
\Bigr].
\]
Fix $r>0$. Then, a second-order Taylor expansion yields, using the fact that the constraint \(r + \frac{\zeta}{N^{1/3}\sqrt{\chi_\beta}} \geq 0\) in the definition of \(\transferOp_n\) can be removed (since \(f\) is supported on \((0, \infty)\)) and the fact that $\sfE_\beta(\zeta) = 0$
,
\begin{align}
	\label{eq:TK-gen}
	\lim_{N\to\infty} N^{2/3} \Bigl\{
	\sfE_\beta \Bigl[
	&\exp\Bigl( -\frac{2\lambda m^*_\beta}{N} \theta r N^{1/3}\sqrt{\chi_\beta} \Bigr)
	f\Bigl(r + \frac{\zeta}{N^{1/3}\sqrt{\chi_\beta}}\Bigr)
	\Bigr]
	- f(r)
	\Bigr\} \notag\\
	&=
	\lim_{N\to\infty} N^{2/3} \bigl\{
	\transferOp_N f(r) - f(r)
	\bigr\} \\
	&=
	\sfE_\beta(\theta)  \Bigl( \tfrac{1}{2} f''(r) - 2\lambda m^*_\beta \sqrt{\chi_\beta} r f(r) \Bigr)
	\defby
	\sfE_\beta(\theta)\,\calL_\beta f(r) \notag.
\end{align}
The last equality in~\eqref{eq:TK-gen} defines the Airy operator
\begin{equation}\label{eq:AiryOp}
	\calL_\beta f(r) = \tfrac{1}{2} f''(r) - 2\lambda m^*_\beta \sqrt{\chi_\beta} r f(r),
\end{equation}
which is a singular Sturm--Liouville operator on $\bbR_+$ with Dirichlet boundary conditions at zero.

Let $f$ belong to the domain \(\calD(\calL_\beta)\) of the self-adjoint extension of the operator $\calL_\beta$, see~\cite[equ.~(1.4)]{Ioffe+Shlosman+Velenik-15}.
It then follows from~\cite[Theorem~1.6.5]{ethier2009markov} that
\begin{equation}\label{eq:uni-lim}
	\lim_{N\to\infty} \transferOp_N^{\lfloor t N^{2/3}/\sfE_\beta(\theta) \rfloor} f = \mathrm{e}^{t\calL_\beta} f ,
\end{equation}
uniformly in $t$-s from bounded intervals of $\bbR_+$.

Going back to~\eqref{eq:G-nstep}, we conclude that
\begin{lemma}
	\label{lem:TK-form}
	Let $f\in\calD(\calL_\beta)$. Then,
	\begin{equation}\label{eq:FS-lim}
		\lim_{N\to\infty}
		\calG_{\beta,\lambda/N,+}^{\lfloor tN^{2/3}/\sfE_\beta(\theta) \rfloor}[\scalingop f] = \mathrm{e}^{t\calL_\beta} f ,
	\end{equation}
	uniformly in $t$ on bounded intervals of $\bbR_+$.
\end{lemma}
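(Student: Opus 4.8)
\subsection*{Proof of Lemma~\ref{lem:TK-form}}

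The plan is to identify the partition function $\calG_{\beta,\lambda/N,+}^{n}[\,\cdot\,]$ with an $n$-fold iterate of a one-step transfer operator acting on functions of the vertical coordinate, and then to read off the claim from the uniform limit~\eqref{eq:uni-lim} that has just been established.

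First I would record the elementary but crucial factorization. Since $\sfA(\underline{\sfS}[0,n]) = \sum_{i=1}^{n}\theta_i\sfZ_{i-1}$ splits into a sum of terms, the $i$-th one depending only on the $i$-th increment and the current height; since $\{\underline{\sfS}[0,n]\subset\bbH_+\}$ is the intersection of the one-step events $\{\sfZ_i\geq 0\}$, $i=1,\dots,n$; since $f(\sfZ_n)$ depends only on the terminal height; and since under $\sfp_\beta$ the increments $(\theta_i,\zeta_i)$ are i.i.d., the Markov property gives
\begin{equation*}
	\calG_{\beta,\lambda/N,+}^{n}[f](\sfu) = \bigl(\widehat{\transferOp}_N^{\,n} f\bigr)(\sfu\cdot\sfe_2),
	\qquad\text{where}\qquad
	\widehat{\transferOp}_N g(z) \defby \sfE_\beta\Bigl[ \mathrm{e}^{-\frac{2\lambda m^*_\beta}{N}\theta z}\, g(z+\zeta)\,\IF{z+\zeta\geq 0} \Bigr] .
\end{equation*}
In particular $\calG_{\beta,\lambda/N,+}^{n}[f](\sfu)$ depends on $\sfu$ only through its vertical coordinate, and the positivity constraint on the whole trajectory is fully encoded by the one-step indicators in $\widehat{\transferOp}_N$.

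Next I would note the intertwining between $\widehat{\transferOp}_N$ and the rescaled transfer operator $\transferOp_N$ defined just above~\eqref{eq:TK-gen}. The change of variables $x = r N^{1/3}\sqrt{\chi_\beta}$ gives at once $\widehat{\transferOp}_N\circ\scalingop = \scalingop\circ\transferOp_N$, hence $\widehat{\transferOp}_N^{\,n}\circ\scalingop = \scalingop\circ\transferOp_N^{\,n}$, so that
\begin{equation*}
	\calG_{\beta,\lambda/N,+}^{n}[\scalingop f] = \scalingop\bigl(\transferOp_N^{\,n} f\bigr) .
\end{equation*}
In words: once read off in the rescaled vertical variable $r = z/(N^{1/3}\sqrt{\chi_\beta})$, the function $\calG_{\beta,\lambda/N,+}^{n}[\scalingop f]$ is nothing but $\transferOp_N^{\,n}f$. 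Setting $n = \lfloor tN^{2/3}/\sfE_\beta(\theta)\rfloor$, and using that $\scalingop$ is a sup-norm contraction together with the continuity of $\mathrm{e}^{t\calL_\beta}f$, the statement~\eqref{eq:FS-lim} is then exactly~\eqref{eq:uni-lim} for $f\in\calD(\calL_\beta)$, uniform on bounded subintervals of $\bbR_+$.

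The genuine analytic content has already been carried out upstream, in the derivation of~\eqref{eq:TK-gen} and~\eqref{eq:uni-lim}: this is where one verifies the hypotheses of~\cite[Theorem~1.6.5]{ethier2009markov}, namely the uniform boundedness of $\{\transferOp_N\}$ on the relevant Banach space, the fact that the Airy operator $\calL_\beta$ with Dirichlet condition at $0$ generates a strongly continuous contraction semigroup, and the identification of a suitable core. For the lemma itself the only point that requires (minor) care is the bookkeeping of the constraint $\underline{\sfS}[0,n]\subset\bbH_+$: in the single-step expansion~\eqref{eq:TK-gen} the support of $f$ away from $0$ allows one to drop it, whereas here it must be retained at every step, and it is precisely this constraint that survives in the limit as the Dirichlet boundary condition of $\calL_\beta$. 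The uniform control of the one-step Taylor remainders needed to pass from~\eqref{eq:TK-gen} to~\eqref{eq:uni-lim} uses only the exponential tails of the step distribution $\sfp_\beta$ and is routine.
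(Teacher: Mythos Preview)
Your proof is correct and follows essentially the same approach as the paper. The paper's own proof is a one-liner asserting the identity $\calG_{\beta,\lambda/N,+}^{k}[\scalingop f] = \transferOp_N^k f$ and invoking~\eqref{eq:uni-lim}; your version makes the same identification but spells out the intermediate unscaled transfer operator $\widehat{\transferOp}_N$ and the intertwining $\widehat{\transferOp}_N\circ\scalingop = \scalingop\circ\transferOp_N$, which is a cleaner way to track the change of variables (and in fact your formula $\calG^{n}[\scalingop f] = \scalingop(\transferOp_N^{n}f)$ is notationally more precise than the paper's, which silently identifies functions of the scaled and unscaled vertical coordinate).
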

\begin{proof}
	This is an immediate consequence of~\eqref{eq:uni-lim}, once one observes that
	\[
		\calG_{\beta,\lambda/N,+}^{k}[\scalingop f] = \transferOp_N^kf.\qedhere
	\]
\end{proof}

\subsection{Time-change and tightness}
\label{sec:Tightness}
Fix $\epsilon >0$ small.
Recall our notation~\eqref{eq:boxes-strips} and consider $\sfu$ and $\sfv$ living in the shifted boxes
\begin{align}
	\label {eq:uv-ref}
	\sfu &\in (-LN^{2/3},cN^{1/3}) + \calB_{N^{1/3+\epsilon},CN^{1/3}} , \notag\\
	\sfv &\in ( LN^{2/3},cN^{1/3}) + \calB_{N^{1/3+\epsilon},CN^{1/3}} .
\end{align}
The rescaled area $\sfA(\underline{\sfS})/N$ has uniform tails with respect to the family of probability measures $\{\Pbetapl{\sfu,\sfv}\}$ with $\sfu,\sfv$ satisfying~\eqref{eq:uv-ref}.
Therefore, any asymptotic event which has vanishing $\Pbetap{\sfu,\sfv}$-probability as $N\to\infty$ has also vanishing $\Pbetapl{\sfu,\sfv}$-probability and can be ignored.
Let us now apply this observation to two particular families of events.

\smallskip\noindent
\textbf{Time-change:}
Let $\underline{\sfS}\in \uOmega^{\sfu,\sfv}$.
We shall write $n(\underline{\sfS})$ for the number of steps.
Then, there exists \(c>0\) such that
\begin{align}
	\Pbetap{\sfu,\sfv} \Bigl( \Babs{n(\underline{\sfS}) - \frac{2LN^{2/3}}{\sfE_\beta(\theta)}}
	\geq N^{1/3 + \epsilon_1} \Bigr)
	&\leq
	N^c
	\Pbeta{\sfu,\sfv} \Bigl( \Babs{n(\underline{\sfS}) - \frac{2LN^{2/3}}{\sfE_\beta(\theta)}}
	\geq N^{1/3 + \epsilon_1} \Bigr) \\
	&\leq \mathrm{e}^{-c_1 N^{2\epsilon_1}} ,
	\label{eq:step-bound}
\end{align}
uniformly in $\sfu,\sfv$ satisfying~\eqref{eq:uv-ref}. The first inequality follows from the fact that in our setting the removal of the positivity constraint results in the power-law correction. The second inequality in~\eqref{eq:step-bound} follows from the local limit theorem and standard large deviation bounds, since the increments of \(\underline{\sfS}\) have exponential tails.

\smallskip
As before, let us record $\sfu$ and $\sfv$ as
\[
\sfu = (s N^{2/3}, r\sqrt{\chi_\beta}N^{1/3})
\quad\text{ and }\quad
\sfv = (t N^{2/3}, y\sqrt{\chi_\beta}N^{1/3}).
\]
For $\underline{\sfS}\in \uOmega^{\sfu,\sfv}$, instead of~\eqref{eq:IN}, consider the following fixed-time-step interpolation $\frj_N$:
\begin{equation}\label{eq:JN}
	\frj_N[\underline{\sfS}]
	\defby
	\operatorname{LI}\Bigl\{
	(s,r),
	\Bigl( s + \frac{t-s}{n(\underline{\sfS})},\frac{\sfZ_1}{N^{1/3}\sqrt{\chi_\beta}} \Bigr),
	\Bigl( s + 2\frac{t-s}{n(\underline{\sfS})},\frac{\sfZ_2}{N^{1/3}\sqrt{\chi_\beta}} \Bigr),
	\dots, (t,y)
	\Bigr\} .
\end{equation}
Under $\Pbetap{\sfu,\sfv}$, both $\frj_N$ and $\fri_N$ are random functions defined on $[s,t]$.
They are related by a time-change:
\begin{equation}\label{eq:tc-phi}
	\frj_N(\tau) = \fri_N\bigl(\varphi(\tau)\bigr),
\end{equation}
where $\varphi$ is the linear interpolation through the points,
\begin{equation}\label{eq:phi-form}
	\varphi\Bigl(s + \frac{(t-s)k}{n}\Bigr) = \frac{\sfT_k}{N^{2/3}} .
\end{equation}
Clearly,
\begin{gather}
	\label{eq:mod-cont}
	\delta_\varphi \defby \max_{\tau\in (s,t)} \abs{\varphi(\tau) - \tau} \leq \max_k \frac{1}{N^{2/3}}\Babs{\frac{k}{n} (\sfT_n-\sfT_0) - (\sfT_k-\sfT_0)} \\
	\shortintertext{and}
	\max_{\tau\in [s,t]}
	\abs{\frj_N(\tau) - \fri_N(\tau)} \leq \max_{\abs{\eta-\tau}\leq \delta_\varphi} \abs{\frj_N(\tau) - \frj_N(\eta)} .
\end{gather}
By~\eqref{eq:step-bound}, we may restrict attention to
\begin{equation}\label{eq:n-range}
	\Babs{n(\underline{\sfS}) - \frac{2LN^{2/3}}{\sfE_\beta(\theta)}} \leq N^{1/3 + \epsilon_1} .
\end{equation}
At this stage, usual stretched exponential moderate deviations upper bounds for the \(n\)-step trajectories $\sfT[0,n]$ and $\sfZ[0,n]$ under $\sfP_\beta$, with $n$ in the range described in~\eqref{eq:n-range}, imply that there exist $\epsilon_2,\epsilon_3,c_2 > 0$ such that
\begin{equation}\label{eq:dist-bound}
	\Pbetap{\sfu,\sfv} \Bigl( \max_{\tau\in [s,t]} \abs{\frj_N(\tau) - \fri_N(\tau)} \geq N^{-\epsilon_2} \Bigr)
	\leq \mathrm{e}^{-c_2 N^{\epsilon_3}} ,
\end{equation}
uniformly in $\sfu,\sfv$ satisfying~\eqref{eq:uv-ref}.

\smallskip\noindent
\textbf{Tightness:}
Uniform tightness on $\sfC\bigl([-T, T],\bbR_+\bigr) \subset \sfC\bigl([s, t],\bbR_+\bigr)$ of the family $\{\Pbetap{\sfu,\sfv}\}$, with $L > T+1$ and $\sfu,\sfv$ satisfying~\eqref{eq:uv-ref},
for the fixed-time-step linear interpolation $\frj_N$ in~\eqref{eq:JN},
was explained in~\cite[end of Section~5.6]{Ioffe+Ott+Velenik+Wachtel-20}.
By~\eqref{eq:dist-bound}, it extends to $\{\Pbetapl{\sfu,\sfv}\}$.
Alternatively, one can carefully follow the route which was paved in~\cite[Section~5.2]{Ioffe-Velenik-Wachtel-18} using the (functionally independent) finite-dimensional distribution statement~\eqref{eq:fin-dist} below.~

\subsection{Proof of the invariance principle to FS diffusions}
\label{sec:convFDD}
In the sequel, we take $L = T+K$.
The invariance principle will be recovered in the limit $\lim_{K\to\infty}\lim_{N\to\infty}$.

In view of the uniform mixing estimates stated in Proposition~\ref{prob:coupl-simT}, it suffices to prove convergence of finite-dimensional distributions for the following family $\{\hPbetapl{f,g}\}$ of convex combinations of $\Pbetapl{\sfu,\sfv}$, with $\sfu,\sfv$ satisfying~\eqref{eq:uv-ref}, (recall~\eqref{eq:def_part_functions_fin_moments}),
\begin{equation}\label{eq:Phat-fg}
	\hPbetapl{f,g}(\,\cdot\,)
	\defby
	\frac{
		\sum_{\sfu,\sfv} F_N(\sfu) \phi_{-L,N}(\sfu)
		\sum_n \calG_{\beta,\lambda/N,+}^{\sfu,\sfv;n} (\, \cdot\,  )
		G_N(\sfv) \phi_{L,N}(\sfv)
	}
	{
		\sum_{\sfu,\sfv} F_N(\sfu) \phi_{-L,N}(\sfu)
		\sum_n \calG_{\beta,\lambda/N,+}^{\sfu,\sfv;n}
		G_N(\sfv) \phi_{L,N}(\sfv)
	}.
\end{equation}
Above, $f,g$ are two non-negative non-trivial test functions with compact support in $(c,C)$ and
\begin{equation}\label{eq:FG}
	F_N(\sfu) \defby f\Bigl( \frac{\sfu\cdot\sfe_2}{N^{1/3}\sqrt{\chi_\beta}} \Bigr)
	\text{ and }
	G_N(\sfv) \defby g\Bigl( \frac{\sfv\cdot\sfe_2}{N^{1/3}\sqrt{\chi_\beta}} \Bigr) .
\end{equation}
Furthermore,
\begin{equation}\label{eq:phi}
	\phi_{-L,N}(\sfu) \defby \1_{\{ \sfu\cdot\sfe_1 = -\lfloor LN^{2/3} \rfloor \}}
	\text{ and }
	\phi_{L,N}(\sfv) \defby \1_{\{ \nabs{\sfv\cdot\sfe_1 - LN^{2/3}} \leq N^{1/3 + \epsilon} \}} .
\end{equation}
Fix $\epsilon' < \epsilon$. By Lemma~\ref{lem:TK-form},
\begin{multline}
	\label{eq:fin-dist-pf}
	\lim_{N\to\infty}
	\frac{1}{N^{1/3}\sqrt{\chi_\beta}}
	\sum_{\sfu,\sfv} F_N(\sfu) \phi_{-L,N}(\sfu)
	\frac{1}{N^{1/3}\sqrt{\chi_\beta}}
	\calG_{\beta,\lambda/N,+}^{\sfu,\sfv;n} G_N(\sfv) \phi_{L,N}(\sfv) \\
	=
	\int_{\bbR_+}\int_{\bbR_+} f(r)\mathrm{e}^{2L\calL_\beta}(r,y) g(y) \dd r\dd y ,
\end{multline}
uniformly in
\begin{equation}\label{eq:ep-epp-in}
	\Babs{n - \frac{2L N^{2/3}}{\sfE_\beta(\theta)}}
	\leq N^{1/3 + \epsilon} - N^{1/3 + \epsilon'}.
\end{equation}
On the other hand, by a slight modification of~\eqref{eq:step-bound},
\begin{equation}\label{eq:out-bound}
	\lim_{N\to\infty}
	\frac{1}{N^{1/3}\sqrt{\chi_\beta}}
	\sum_{\sfu,\sfv} F_N(\sfu) \phi_{-L,N}(\sfu)
	\frac{1}{N^{1/3}\sqrt{\chi_\beta}}
	\calG_{\beta,\lambda/N,+}^{\sfu,\sfv;n} G_N(\sfv) \phi_{L,N}(\sfv)
	= 0 ,
\end{equation}
uniformly in
\begin{equation}\label{eq:ep-epp-out}
	\Babs{n - \frac{2L N^{2/3}}{\sfE_\beta(\theta)}}
	\geq N^{1/3 + \epsilon} + N^{1/3 + \epsilon'} .
\end{equation}
Therefore, the denominator in~\eqref{eq:Phat-fg} satisfies
\begin{multline}
	\label{eq:denom-lim}
	\lim_{N\to\infty}
	\frac{1}{N^{1/3 + \epsilon}}
	\frac{1}{N^{1/3}\sqrt{\chi_\beta}}
	\sum_{\sfu,\sfv} F_N(\sfu) \phi_{-L,N}(\sfu)
	\frac{1}{N^{1/3}\sqrt{\chi_\beta}}
	\sum_n \calG_{\beta,\lambda/N,+}^{\sfu,\sfv;n} G_N(\sfv) \phi_{L,N}(\sfv) \\
	=
	\int_{\bbR_+}\int_{\bbR_+} f(r) \mathrm{e}^{2L\calL_\beta}(r,y) g(y) \dd r\dd y .
\end{multline}
Let $-T \leq \tau_1 < \tau_2 < \dots < \tau_k\leq T$.
Let $h_1, \dots, h_k$ be smooth test functions with compact support in $(c,C)$.
Literally repeating the above argument for the numerator in~\eqref{eq:Phat-fg} of the form
\[
\sum_{\sfu,\sfv} F_N(\sfu)
\phi_{-L,N}(\sfu)
\sum_n \calG_{\beta,\lambda/N,+}^{\sfu,\sfv;n} \biggl( \prod_{\ell=1}^k h_\ell\bigl(\frj_N(\tau_\ell)\bigr) \biggr)
G_N(\sfv) \phi_{L,N}(\sfv) ,
\]
(notice that the scaling operator is implicitly present in \(\frj_N\)) and setting $\sfK_\tau \defby \mathrm{e}^{\tau\calL_\beta}$, we obtain
\begin{multline}
	\label{eq:fin-dist}
	\lim_{N\to\infty}
	\hPbetapl{f,g}\biggl( \prod_{\ell=1}^k h_\ell\bigl(\frj_N(\tau_\ell)\bigr) \biggr) = \\
	\frac{
		\int_{\bbR_+}\!\!\!\cdots \!\int_{\bbR_+} f(r)
		\sfK_{\tau_1+L}(r,y_1) h_1(y_1) \sfK_{\tau_2-\tau_1}(y_1,y_2) h_2(y_2)
		\cdots \sfK_{L-\tau_k}(y_k,y) g(y) \dd r \prod_\ell\dd y_\ell \dd y
	}
	{
		\int_{\bbR_+}\int_{\bbR_+} f(r) \sfK_{2L}(r,y) g(y) \dd r\dd y
	} .
\end{multline}
At this point, we can take the limit $L\to\infty$ and proceed as in~\cite[(3.37)]{Ioffe-Velenik-Wachtel-18}.
\section{Acknowledgments}

The authors are grateful to Shirshendu Ganguly and Reza Gheissari for sending them their preprint~\cite{GG20}.
They also thank the referees for their careful reading and suggestions that have improved the presentation of this work.
The research of D. Ioffe was partially supported by	Israeli Science Foundation grant 765/18.
S. Ott thanks the university Roma Tre for its hospitality and is supported by the Swiss NSF through an early PostDoc.Mobility Grant.
The research of S. Shlosman was partially supported by the Russian Science Foundation (project No.  20-41-09009).
The research of Y. Velenik was partially supported by the Swiss NSF through the NCCR SwissMAP.

\appendix
\renewcommand{\theequation}{\thesection.\arabic{equation}}
\renewcommand{\thesubsection}{\thesection.\arabic{subsection}}
\renewcommand{\thesubsubsection}{\thesection.\arabic{subsection}.\arabic{subsubsection}}

\section{Renewal structure via percolation estimates}
\label{app:renewal_perco}

\subsection{The underlying percolation picture}

Consider an irreducible  decomposition
\begin{equation}\label{eq:kappa-konc}
	\eta = \eta _1 \circ \eta_2 \circ \dots \circ \eta _m ,
\end{equation}
where, as usual, \(\eta_1\in\SetRootMarkBackCont\), \(\eta_m\in\SetRootMarkForwCont\) and, for \(1<i<m\), \(\eta_i\in\SetRootDiaCont\).
Set \(I_\eta \defby \{1,\dots,m\}\) and let  \(\INts(\eta)\) be the set of connected sub-intervals \(I \defby \{\ell,\ell+1,\dots,r\} \subseteq I_\eta\) (\(\ell<r\)).

Consider now Bernoulli percolation \(\frn\) on \(\Theta_\eta \defby \{0,1\}^{\INts(\eta)} =
\{0,1\}^{\binom{m}{2}}\), where an interval \(I\subset \INts(\eta)\) is independently open with a certain probability \(\calP_{N,\beta}(I\given\eta)\) specified below in~\eqref{eq:P-kappa}.
Let us use \(\calP_{N,\beta}(\cdot\given\eta)\) for the corresponding product measure on \(\Theta_\eta\).

Any realization \(\frn \in \Theta_\eta\) of open sub-intervals induces a splitting of
\(\{1,\dots,m\}\) into maximal connected components in the following way:
\begin{enumerate}
	\item If \(I = \{\ell,\ell+1,\dots,r\} \in \frn\), then  \(I\) is connected.
	\item If \(I_1\) and \(I_2\) are connected and such that \(I_1\cap I_2\neq\emptyset\), then \(I_1\cup I_2\) is connected.
\end{enumerate}
Let us denote
\begin{equation}\label{eq:Cl-kappa}
	\underline{\sf Cl}(\frn) \defby \{J_1,\dots,J_{\ell(\frn)}\}
\end{equation}
the collection of all maximal connected components of \(\{1,\dots,m\}\) as induced by \(\frn\).

\begin{figure}[ht]
	\centering
	\includegraphics[width=6cm]{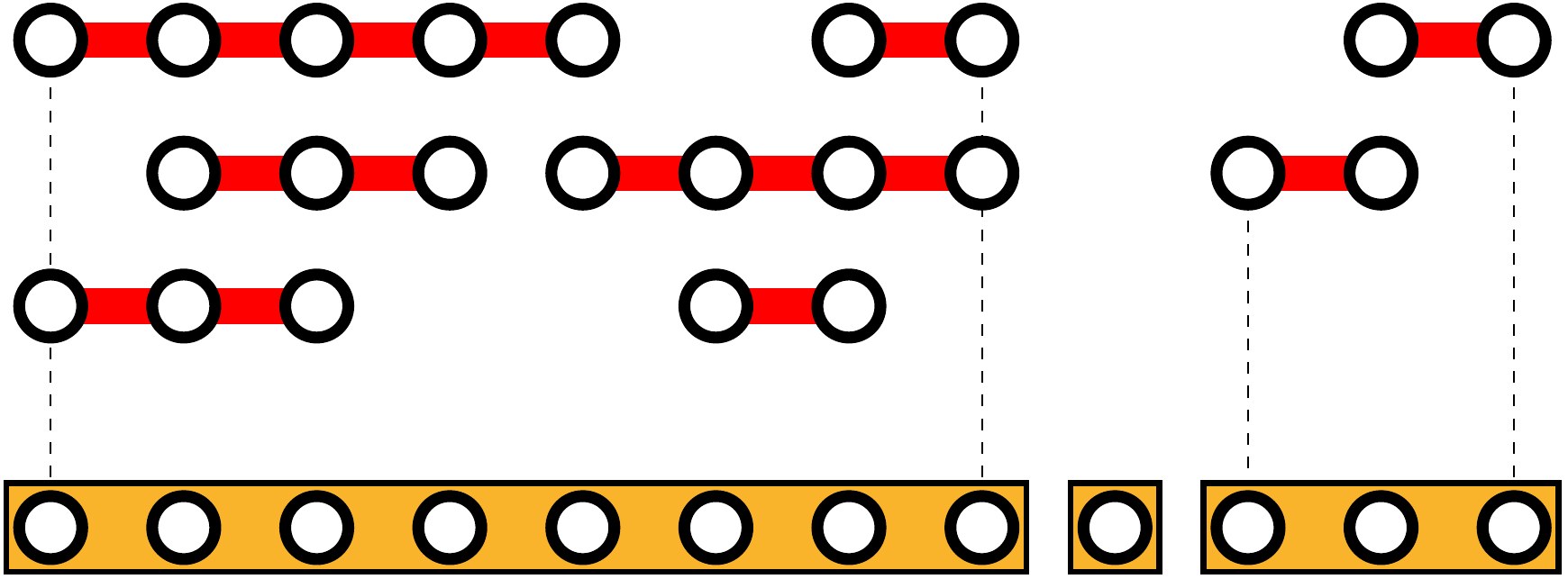}
	\caption{The set of open sub-intervals of a configuration \(\frn\) (top 3 lines; the vertical position is irrelevant and only introduced for readability) and the corresponding set \(\underline{\sf Cl}(\frn)\) of maximal connected components (bottom line).}
	\label{fig:intervals}
\end{figure}

Define the event \(\Theta^{\sf cd}_\eta \subset \Theta_\eta\) by
\begin{equation}\label{eq:frc-kappa}
	\Theta^{\sf cd}_\eta \defby \{I_\eta \text{ is connected}\} = \setof{\frn}{\underline{\sf Cl}(\frn) = \{I_\eta\}} .
\end{equation}
The quantity
\begin{equation}\label{eq:P-kappa-cd}
	\calP_{N,\beta}(\eta) \defby \calP_{N,\beta}(\frn\in\Theta^{\sf cd}_\eta \given \eta)
	=
	\sum_{\frn\in\Theta^{\sf cd}_\eta} \prod_{I\in\frn} \calP_{N,\beta} (I \given \eta) \prod_{I\not\in\frn} \bigl( 1 - \calP_{N,\beta}(I \given \eta) \bigr)
\end{equation}
is, thereby, well defined.

We are ready to define the weights \(\sfp_{N,\beta}\) appearing in~\eqref{eq:WN-beta}:
\begin{definition}\label{def:pnbeta}
	For any \(\eta \subset \bbH_+^*\) such that \(\eta\in\SetRootMarkBackCont \cup \SetRootMarkForwCont\),
	\begin{equation}\label{eq:pnbeta}
		\sfp_{N,\beta}(\eta) \defby \mathrm{e}^{\taub \theta(\eta)} q_{N,\beta}(\eta) \calP_{N,\beta}(\eta) .
	\end{equation}
\end{definition}

\subsection{Conditional weights and expansions}

The proof of Theorem~\ref{thm:fact} is a kind of high-temperature expansion.
Consider \(\eta \) in~\eqref{eq:kappa-konc}.
Let us rewrite the weight \(q_{N,\beta}(\eta)\) as follows:
\[
	q_{N,\beta}(\eta)
	=
	q_{N,\beta}(\eta_1 \given \eta_2 \circ \dots \circ \eta_m) \,
	q_{N,\beta}(\eta_2 \circ \dots \circ \eta_m) .
\]
Next, we factor the conditional weight above as
\begin{multline}
	q_{N,\beta}(\eta_1 \given \eta_2 \circ \dots \circ \eta_m) \\
	=
	q_{N,\beta}(\eta_1) \frac{q_{N,\beta}(\eta_1 \given \eta_2)}{q_{N,\beta}(\eta_1)}
	\frac{q_{N,\beta}(\eta_1 \given \eta_2 \circ \eta_3)}{q_{N,\beta}(\eta_1 \given \eta_2)} \dots
	\frac{q_{N,\beta}(\eta_1 \given \eta_2 \circ \dots \circ \eta_m)}{q_{N,\beta}(\eta_1 \given \eta_2 \circ \dots \circ \eta_{m-1})} .
	\label{eq:cw-1}
\end{multline}
Proceeding with the very same telescopic factorization of \(q_{N,\beta}(\eta_2 \circ \dots \circ \eta_m)\), we eventually arrive to the following construction:
For \(r > \ell+1\) and \(I = \{\ell,\ell+1,\dots,r\}\), set
\begin{equation}\label{eq:rho-N-beta}
	\rho_{N,\beta} (I \given \eta)
	\defby
	\frac
	{q_{N,\beta}(\eta_\ell \given \eta_{\ell+1} \circ \dots \circ \eta_{r})}
	{q_{N,\beta}(\eta_\ell \given \eta_{\ell+1} \circ \dots \circ \eta_{r-1})} - 1 .
\end{equation}
For two point intervals \(I = \{\ell,\ell+1\}\), set
\begin{equation}\label{eq:rho-N-beta-2}
	\rho_{N,\beta}(I \given \eta)
	\defby
	\frac{q_{N,\beta}(\eta_\ell \given \eta_{\ell+1})}{q_{N,\beta}(\eta_\ell)} - 1 .
\end{equation}
Then,
\begin{equation}\label{eq:IRD2-weights}
	q_{N,\beta}(\eta) = q_{N,\beta}(\eta_1) \cdots q_{N,\beta}(\eta_m)\!\!
	\prod_{I\in\INts(\eta)} \bigl( 1 + \rho_{N,\beta}(I \given \eta) \bigr) .
\end{equation}
For any interval \(I = \{\ell,\dots,r\} \in \INts(\eta)\), denote by \(\abs{I} \defby 1+(r-\ell)\) the number of points in \(I\).
Note that the weights \(\rho_{N,\beta}\) are \emph{non-negative} by the domain-monotonicity property~\ref{property:MonotVolWeight} and, in view of~\eqref{eq:P5} and~\eqref{eq:cbeta-nubeta}, satisfy
\begin{equation}\label{eq:rho-decay}
	\rho_{N,\beta} (I \given \eta)
	\leq
	c_\beta \mathrm{e}^{- g_\beta (\abs{I} - 2)},
\end{equation}
with
\begin{equation}\label{eq:cbeta-nubeta-w}
	\lim_{\beta\to\infty} c_\beta = 0
	\quad\text{ and }\quad
	\lim_{\beta\to\infty} g_\beta = \infty .
\end{equation}

\subsubsection{Probabilities \(\calP_{N,\beta}(\cdot \given \eta)\)}
\label{subsub:ubPN}

Let us define the probabilities \(\calP_{N,\beta}(I \given \eta)\) appearing in~\eqref{eq:P-kappa-cd}:
\begin{equation}\label{eq:P-kappa}
	\calP_{N,\beta} (I \given \eta) \defby \frac{\rho_{N,\beta}(I \given \eta)}{1 + \rho_{N,\beta}(I \given \eta)} \stackrel{\eqref{eq:rho-decay}}{\leq} c_\beta \mathrm{e}^{- g_\beta (\abs{I} - 2)}.
\end{equation}

\subsubsection{Expansion of \(\prod_{I\in \INts(\eta)} \bigl( 1 + \rho_{N,\beta}(I \given \eta) \bigr)\)}

Let us expand the product in the right-hand side of~\eqref{eq:IRD2-weights}:
\begin{equation}\label{eq:IRD2-weights-e}
	q_{N,\beta}(\eta) = q_{N,\beta}(\eta_1) \cdots q_{N,\beta}(\eta_m)
	\sum_{\frn\in\Omega_\eta} \prod_{I:\,\frn(I) = 1} \rho_{N,\beta}(I \given \eta) .
\end{equation}
Given \(\frn\in\Omega_\eta\), one can encode the corresponding decomposition into maximal connected sub-intervals \(\underline{\sf Cl} = \underline{\sf Cl}(\frn)\) in~\eqref{eq:Cl-kappa} as
\begin{equation}\label{eq:kappa-coarse}
	\eta = \omega_1 \circ \dots \circ \omega_\ell ,
\end{equation}
where \(\omega_k \defby \omega_{J_k}\) is the concatenation of consecutive paths \(\eta_i\) with indices \(i\) in \(J_k\subseteq I_\eta\).
Clearly, the paths \(\omega_k\) belong to \(\SetRootDiaCont\), but, in general, are not irreducible.

Rearranging the sum in the right-hand side of~\eqref{eq:IRD2-weights-e} according to the values of \(\underline{\sf Cl} (\frn)\) or, equivalently, according to the values in the right-hand side in~\eqref{eq:kappa-coarse} and using the reduced notation \(\Theta_J \defby \Theta_{\omega_J}\),
\begin{align}
	\sum_{\frn\in\Theta_\eta} \prod_{I:\,\frn(I) = 1} \rho_{N,\beta}(I \given \eta)
	&=
	\sum_{\underline{\sf Cl}} \prod_{J \in \underline{\sf Cl}}
	\Bigl(
		\sum_{\frn\in\Theta_J^{\rm cd}} \prod_{I:\,\frn(I) = 1} \rho_{N,\beta} (I \given \omega_J )
	\Bigr) \nonumber \\
	&=
	\sum_{\underline{\sf Cl}} \prod_{J \in \underline{\sf Cl}} \calP_{N,\beta}(\omega _J)
	\prod_{I\in\INts(\omega_J)} \bigl( 1 + \rho_{N,\beta}(I \given \omega_J) \bigr) .
	\label{eq:rho-sum}
\end{align}
Recall Definition~\ref{def:pnbeta}.
In view of~\eqref{eq:IRD2-weights} and the additivity of displacement, \(\theta(\eta) = \sum_k \theta(\omega_k)\), \eqref{eq:rho-sum} implies that
\begin{equation}\label{eq:qbeta-form}
	q_{N,\beta}(\eta) \mathrm{e}^{\taub \theta(\eta)}
	=
	\sum_{\omega_1 \circ \dots \circ\, \omega_\ell = \eta} \prod_j \sfp_{N,\beta}(\omega_j) ,
\end{equation}
as claimed in~\eqref{eq:fact} (Property~\ref{property:W1}) of Theorem~\ref{thm:fact}.

\subsection{Properties of \(\lbr \sfp_{N, \beta}\rbr\)}

We still have to justify~\eqref{eq:W-bound-cp} (Property~\ref{property:W2}) and~\eqref{eq:pnb-pb} (Property~\ref{property:W3}) of Theorem~\ref{thm:fact}.

\subsubsection{Property~\ref{property:W2}}

In terms of the representation~\eqref{eq:pnbeta}, Property~\ref{property:W2} reads as follows:
Given an irreducible decomposition \(\eta = \kappa_1 \circ \dots \circ \kappa_n\), we need to check that
\begin{equation}\label{eq:W2-bound}
	\calP_{N,\beta}(\eta) \leq \mathrm{e}^{- \nu_\beta (n-1)} .
\end{equation}
We use~\eqref{eq:rho-decay} and~\eqref{eq:cbeta-nubeta-w} as an input.
The proof is performed in two steps:

\noindent
\step{1}
There is a very easy proof of~\eqref{eq:W2-bound} when \(\beta\) is large enough:

For \(a\in\{1,\dots,n-1\}\), one can define random variables \(\xi_a\) on \(\Theta_\eta\) in the following way:
Given \(\frn\in\Theta_\eta\), set
\begin{equation}\label{eq:xi-a}
	\xi_a = \xi_a(\frn) \defby \max\bsetof{\ell > 0}{\frn\bigl(\{a,\dots,a+\ell\}\bigr) = 1}.
\end{equation}
If there is no such \(\ell\), we set \(\xi_a \defby 0\).
Under \(\calP_{N,\beta}(\cdot \given \eta)\), the random variables \(\{\xi_a\}\) are independent and, by virtue
of~\eqref{eq:P-kappa}, satisfy the following upper bound:
For \(r\in\bbN\),
\begin{equation}\label{eq:ub-geqr}
	\calP_{N,\beta}(\xi_a \geq r \given \eta)
	\leq
	\sum_{\ell = r}^\infty \calP_{N,\beta} \bigl(\frn\bigl(\{a,\dots,a+\ell\}\bigr) = 1 \bgiven \eta \bigr)
	\leq
	c_\beta \frac{\mathrm{e}^{-g_\beta (r-1)}}{1-{\rm e }^{-g_\beta}} .
\end{equation}
Therefore, the expectation
\begin{equation}\label{eq:Exp-Nb}
\calE_{N,\beta}(\xi_a \given \eta) \leq \frac{c_\beta}{(1-\mathrm{e}^{-g_\beta})^2} .
\end{equation}
The upper bound~\eqref{eq:Exp-Nb} holds uniformly in \(\eta\) and \(a\).

By~\eqref{eq:cbeta-nubeta-w}, the expression on the right-hand side of~\eqref{eq:Exp-Nb} tends to zero as \(\beta\to\infty\). In particular, \(\calE_{N,\beta}(\xi_a \given \eta ) < 1/2\) if \(\beta\) is large enough.
Clearly, the occurrence of the event \(\{\frn\in\Theta_\eta^{\rm cd}\}\) implies that \(\sum_{a=1}^{n-1}\xi_a \geq n\).
Thus, by the usual Cramér's upper bound, there exists \(\nu_\beta > 0\) such that, for any \(n\geq 2\) and for any \(\eta\) as in~\eqref{eq:kappa-konc},
\begin{equation}\label{eq:Cram1}
	\calP_{N,\beta}(\eta)
	=
	\calP_{N,\beta}(\frn\in\Theta_\eta^{\rm cd} \given \eta)
	\leq
	\calP_{N,\beta} \Bigl( \sum\xi_a \geq n \Bgiven \eta \Bigr)
	\leq
	c_\beta \mathrm{e}^{-\nu_\beta (n-1)} .
\end{equation}
\step{2}
Recall~\eqref{eq:Cl-kappa}.
Notice that the above argument actually implies a lower bound on the number \(\ell(\frn)\) of disjoint components of \(\underline{{\sf Cl}}(\frn)\).
Indeed, \(\{\ell(\frn) \leq k\}\) implies that \(\sum_{a=1}^{n-1} \xi_a \geq n-k\).
Consequently, one can refine~\eqref{eq:Cram1} as follows:
For all \(\beta\) sufficiently large, there exists \(\nu_\beta > 0\) such that
\begin{equation}\label{eq:Cramer2}
	\calP_{N,\beta} \Bigl( \ell(\frn) \leq \frac{n}{4} \Bgiven \eta \Bigr)
	\leq
	\calP_{N,\beta } \Bigl( \sum_{a}\xi_a \geq \frac{3}{4}n \Bgiven \eta \Bigr)
	\leq
	c_\beta \mathrm{e}^{-\nu_\beta (n-1)} .
\end{equation}
Let us turn to the case of arbitrary fixed \(\beta > \beta_c\).
We cannot anymore assume that \(\calE_{N,\beta} (\xi_a \given \eta) < 1/2\).
This can be dealt with in the following way:
We choose a cutoff value \(r = r_\beta \in \bbN\) and say that a connected interval \(\{a,\dots,a+m\}\) is
\emph{short} if \(m \leq r\) and \emph{long} if \(m > r\).
In this way, we represent \(\Theta_\eta = \Theta_\eta^{\sf short}\times \Theta_\eta^{\rm long}\) and
\(\calP_{N,\beta}(\cdot \given \eta) = \calP_{N,\beta}^{\sf short}(\cdot \given \eta) \otimes
\calP_{N,\beta}^{\sf long}(\cdot \given \eta)\).
Let us redefine random variables \(\xi_a\) in~\eqref{eq:xi-a} as functions on \(\Theta_\eta^{\rm long}\) only,
\begin{equation}\label{eq:xi-a-l}
\xi_a^{\rm long} = \max\bsetof{\ell > 0}{\frn^{\rm long}\bigl(\{a,\dots,a+\ell\}\bigr) = 1}.
\end{equation}
In view of~\eqref{eq:P-kappa}, the expectation \(\calE_{N,\beta}(\xi_a^{\rm long}) < 1/4\), once the cutoff value \(r_\beta\) is chosen large enough. Hence, \eqref{eq:Cramer2} applies for \(\calP_{N,\beta}^{\rm long} (\cdot \given \eta)\). If \(\ell(\frn^{\rm long}) \geq \frac{n}{4}\), then one can still recover the event
\[
\bigl\{ \frn = (\frn^{\rm short},\frn^{\rm long}) \in \Theta_\eta^{\rm cd} \bigr\}
\]
by insisting that short intervals cover all \(\ell(\frn^{\rm long}) - 1\) spacings between successive maximal connected long clusters. But this is already a short-range percolation problem in one dimension, and the corresponding probabilities \(\calP_{N,\beta}^{\sf short}(\cdot \given \eta)\) have tails that decay exponentially in \(n\). \qed

\subsubsection{Property~\ref{property:W3}}
\label{sec:property:W3}

Recall Definition~\ref{def:pnbeta}.
The infinite-volume weights \(\{\sfp_\beta\}\) are defined as follows:
\begin{definition}\label{def:pnbeta-inf}
	For any \(\eta \in \SetRootMarkBackCont \cup \SetRootMarkForwCont\),
	\begin{equation}\label{eq:pnbeta-inf}
		\sfp_{\beta}(\eta) \defby \mathrm{e}^{\taub \theta(\eta)} q_{\beta}(\eta) \calP_{\beta}(\eta) ,
	\end{equation}
	where \(\calP_\beta\) is defined as in Subsection~\ref{subsub:ubPN}, but using the infinite-volume weights \(\rho_\beta\) instead of \(\rho_{N,\beta}\). In their turn, the weights \(\rho_\beta\) are defined as in~\eqref{eq:rho-N-beta}--\,\eqref{eq:rho-N-beta-2}, but using the infinite-volume path weights \(q_{\beta}\) instead of \(q_{N,\beta}\).
\end{definition}
Therefore, \eqref{eq:pnb-pb} is a consequence of~\eqref{eq:P5}. \qed

It remains to show~\eqref{eq:pbet-weights}. The latter, however, is a standard consequence of the multi-dimensional renewal theory under exponential tails~\cite{ioffe2015LN}:
First of all, set \(\SetRootDiaCont(0) = \bigcup_\sfu \SetRootDiaCont(0,\sfu)\) and note that,
for any \(\eta\in\SetRootDiaCont(0)\) which admits an irreducible decomposition~\eqref{eq:IRD1-gen}, the infinite-volume analogue of~\eqref{eq:fact} and~\eqref{eq:qbeta-form} holds:~
\begin{equation}\label{eq:fact-inf}
	q_{\beta}(\eta) \mathrm{e}^{\taub \theta(\eta)} = \sum_{\uomega\sim\eta} \prod_i \sfp_\beta(\omega_i) .
\end{equation}
Consider now the series
\begin{equation}\label{eq:ser-h}
	\sfh\mapsto \sum_{\gamma\in\SetRootDiaCont(0)}\mathrm{e}^{\taub \theta(\gamma) + \sfh\cdot\sfX(\gamma)} q_\beta(\gamma) .
\end{equation}
For small \(\sfh\), this series converges if and only if \(\taub \sfe_1 +\sfh\) belongs to the interior of the Wulff shape \({\mathbf K}_\beta\). In terms of the weights \(\sfp_\beta\), the right-hand side of~\eqref{eq:ser-h} can be expressed as
\begin{equation}\label{eq:ser-h-p}
	\sum_{k=1}^\infty \Bigl( \sum_{\omega\in\SetRootDiaCont(0)} \mathrm{e}^{\sfh\cdot\sfX(\omega)} \sfp_\beta(\omega) \Bigr)^k .
\end{equation}
By the exponential decay of the weights \(\sfp_{\beta}\), the series
\[
	\sfh \mapsto \sum_{\omega\in\SetRootDiaCont(0)} \mathrm{e}^{\sfh\cdot\sfX(\omega)} \sfp_\beta(\omega)
\]
converges in a small neighborhood of the origin.
Hence the sum should be exactly one at \(\sfh = 0\). \qed

\section{Curvature of the Wulff shape \(\mathbf{K}_\beta\) at \(\taub\sfe_1\)}
\label{sec:Curvature}

We briefly explain why \(\chi_\beta\) in~\eqref{eq:chi-fun} can be identified with the curvature of the boundary of the Wulff shape \(\mathbf{K}_\beta\) at \(\taub\sfe_1\).
The starting point is the characterization of \(\partial\mathbf{K}_\beta\) in a neighborhood of \(\taub\sfe_1\) by
\[
\taub\sfe_1 + \sfh \in \partial\mathbf{K}_\beta
\;\Leftrightarrow\;
\sfE_\beta \bigl[ e^{\sfh\cdot\sfX(\gamma)} \bigr] = 1 ,
\]
as explained in Section~\ref{sec:property:W3}. Remember our notation \(\sfX(\eta) = (\theta(\eta),\zeta(\eta))\). In the same coordinate representation, we parameterize vectors \(\sfh\) such that \(\taub\sfe_1 + \sfh \in \partial\mathbf{K}_\beta\) by \(\sfh = -(\phi(t),t)\) for \(t\) in a neighborhood of \(0\). Observe that \(\phi(0)=\phi'(0)=0\). Expanding to second order in \(t\) thus yields the identity
\[
	\phi''(0) = \frac{\sfE_\beta[\zeta^2]}{\sfE_\beta[\theta]},
\]
since \(\sfE_\beta[\zeta] = 0\). Finally, since \(\phi'(0)=0\), the curvature of \(\partial\mathbf{K}_\beta\) at \(\taub\sfe_1\) indeed reduces to \(\phi''(0)\).

\section{Ferrari-Spohn diffusions}
\label{app:FS}

In this appendix, we briefly recall the general definition of Ferrari--Spohn diffusions on \(\bbR_+\). We refer to~\cite{Ioffe+Shlosman+Velenik-15} and~\cite{Ioffe-Velenik-Wachtel-18} for more details.

\smallskip
Fix a real number \(\sigma>0\) and a non-negative function \(q\in\sfC^2(\bbR_+)\)
such that \(\lim_{r\to\infty} q (r) = \infty\). Consider the singular  Sturm--Liouville operator
\[
	\sfL_{\sigma,q} \defby \frac{\sigma^2}{2}\frac{\dd^2}{\dd r^2} - q (r) ,
\]
on \(\bbR_+\) with Dirichlet boundary condition at \(0\).
This operator possesses a complete orthonormal family \(\{\varphi_i, i\in\bbN_{\geq 0}\}\) of simple eigenfunctions in \(\bbL_2(\bbR_+)\) with eigenvalues
\[
	0 >- \eig_0 > -\eig_1 > -\eig_2 > \cdots
\]
satisfying \(\lim_{i\to\infty} \eig_i = \infty\).
The eigenfunctions \(\varphi_i\), \(i\in\bbN_{\geq 0}\), are smooth and \(\varphi_i\) has exactly \(i\) zeros in the interval \((0, \infty)\).

\medskip
The Ferrari--Spohn diffusion associated to \(\sigma\) and \(q\) is the
diffusion on $(0, \infty)$ with generator
\[
	\sfG_{\sigma, q}\psi
	\defby
	\frac{1}{\varphi_0 } (\sfL_{\sigma,q} +\eig_0) (\psi\varphi_0) =
	\frac{\sigma^2}{2}\frac{\dd^2\psi }{\dd r^2} + \sigma^2
	\frac{\varphi_0^\prime }{\varphi_0}\frac{\dd\psi }{\dd r}.
\]
This diffusion is ergodic and reversible with respect to the measure
\(\dd\mu_0(r)=\varphi_0^2(r)\dd r\).

\medskip
In the present paper, the relevant choice for the function \(q\) is \(q(r)= cr\) for some suitably chosen constant \(c\) depending on the parameters of the Ising model.
In this case, since the Airy function \(\mathsf{Ai}\) satisfies \(\frac{\dd^2}{\dd r^2}\mathsf{Ai}(r) = r\mathsf{Ai}(r)\), a simple computation shows that
\[
	\varphi_0
	=
	\mathsf{Ai}(C r-\omega_1) \quad{\rm and}\quad \eig_0
	=
	\frac{c\omega_1}{C},
\]
where \(-\omega_1\) is the smallest zero (in absolute value) of \(\mathsf{Ai}\) (see Fig.~\ref{fig:Airy}) and \(C = (2c/\sigma^2)^{1/3}\).
\begin{figure}
	\includegraphics[width=10cm]{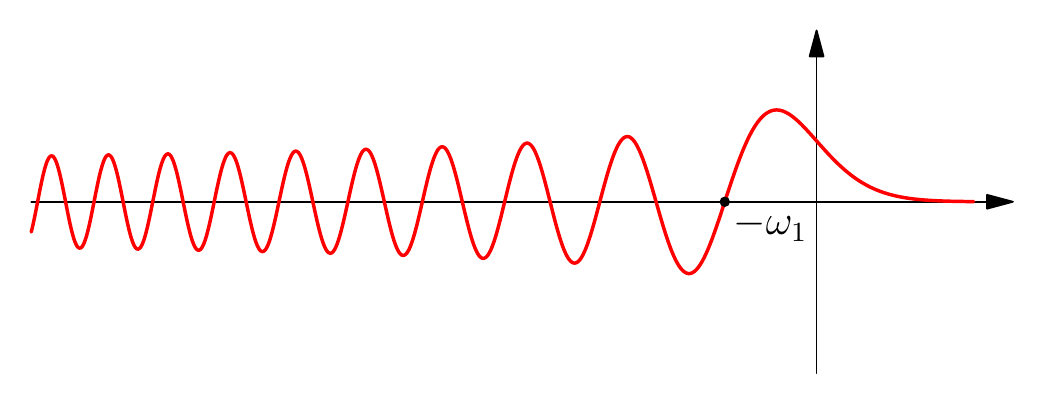}
	\caption{The Airy function \(\mathsf{Ai}\) and its first zero \(-\omega_1\).}
	\label{fig:Airy}
\end{figure}

\bibliographystyle{imsart-number}
\bibliography{ISV18}

\end{document}